\patchcmd\maketitle{\def\@makefnmark{\rlap{\@textsuperscript{\normalfont\@thefnmark}}}}{}{}{}
\def\thanksAAffil#1{
  \footnotemarkAAffil\protected@xdef\@thanks{\@thanks%
        \protect\footnotetextAAffil[\the \c@footnoteAAffil]{#1}}%
}
\def\thanksANote#1{%
  \footnotemarkANote%
  \protected@xdef\@thanks{\@thanks%
        \protect\footnotetextANote[\the \c@footnoteANote]{#1}}%
}
\theoremstyle{definition}
\newtheorem{defi}{Definition}
\theoremstyle{plain}
\newtheorem{thm}{Theorem}
\newtheorem{lmm}{Lemma}
\theoremstyle{remark}
\theoremstyle{plain}
\newtheorem{prp}{Proposition}
\theoremstyle{plain}
\theoremstyle{plain}
\newenvironment{customthm}[1]
  {\innercustomthm}
  {\endinnercustomthm}
\begin{document}
\title{Persistence Diagrams Estimation of Multivariate Piecewise Hölder-continuous Signals}
\author{Hugo Henneuse \thanksAAffil{Laboratoire de Mathématiques d'Orsay, Université Paris-Saclay, Orsay, France}$^{\text{ ,}}$\thanksAAffil{DataShape, Inria Saclay, Palaiseau, France}\\ \href{mailto:hugo.henneuse@universite-paris-saclay.fr}{hugo.henneuse@universite-paris-saclay.fr}}
\maketitle
\begin{abstract}
To our knowledge, the analysis of convergence rates for persistence diagrams estimation from noisy signals has predominantly relied on lifting signal estimation results through sup-norm (or other functional norm) stability theorems. We believe that moving forward from this approach can lead to considerable gains. We illustrate it in the setting of nonparametric regression. From a minimax perspective, we examine the inference of persistence diagrams (for the sublevel sets filtration). We show that for piecewise Hölder-continuous functions, with control over the reach of the set of discontinuities, taking the persistence diagram coming from a simple histogram estimator of the signal permits achieving the minimax rates known for Hölder-continuous functions. 
\end{abstract}
\section*{Introduction}
Inferring information from noisy real-valued signals is a central subject in statistics. Specifically, in the nonparametric regression setting, where the signal is observed discretely and up to additive noise, the recovery of the whole signal structure has been extensively studied by the nonparametric statistics community. When the signal is regular (e.g., belonging to a Hölder, Sobolev, or Besov space), rigorous minimax studies and tractable optimal procedures have been provided, forming a nearly exhaustive benchmark. For an overview, see \citet{TsybakovBook}. Nonetheless, in many practical scenarios, signals are not so regular. For instance, a photograph of a natural scene often exhibits smooth variations within homogeneous regions (such as sky, sea, or foliage), but the boundaries of the objects introduce sharp discontinuities. This has motivated the study of broader classes of irregular signals, typically signals that are only piecewise continuous. This setting has been explored in several subsequent works. We refer the reader to \citet{bookQiu} for an overview. However, proposed methods suffer from certain limitations: strong additional knowledge assumptions (e.g., assuming knowledge of the number of jumps, their locations, or their magnitudes), limited to low-dimensional cases (only univariate or bivariate signals), high computational costs, or lack of rigorous and general statistical guarantees. These challenges motivate the exploration of looser descriptors that can be inferred more easily. \\\\
Over the past two decades, Topological Data Analysis (TDA) has emerged as a powerful framework, introducing geometric and topological tools to better understand complex data. Among these tools, persistent homology has received particular attention. By encoding multiscale topological features in the form of persistence diagrams (or barcodes), it provides a rich and compact summary that has proven to be a versatile descriptor, valuable from both practical and theoretical perspectives. Its applications encompass image analysis, time-series analysis, network analysis, clustering and classification tasks, and much more, as highlighted in \cite{Harer08} and \cite{chazal2021introduction}.\\\\
A growing body of work has focused on the probabilistic and statistical analysis of persistence diagrams or related objects. Importantly, this has led to the establishment of several limit theorems \citep{Bubenik15,Hiraoka16,Divol18,Divol21} and descriptions of persistence diagrams (or related quantities) arising from important random objects such as Brownian motion, fractional Brownian motion, and Lévy processes \citep{Divol18,Perez2020,perez2022,PerezThese}, point processes \citep{Kahle2009,Adler12,Divol18}, and Gaussian fields \citep{Adler10,Bobrowski12,Pranav19,Pranav2021,Masoomy21}.\\\\
Beyond these asymptotic and descriptive results, another major line of research has focused on the estimation of persistence diagrams from data. In this context, the density model, in which one observes a point cloud formed by i.i.d. samples $X_1, \dots, X_n$ from a probability distribution, has become a central framework. This line of research was initiated in a parametric setting by \citet{BubenikKim07}, and has since been extended to more general nonparametric frameworks \citep{Chazal11, Fasy, ChazalGlisseMichel, Chazal13}. These works introduce various geometric assumptions on the distribution and its support, which enable consistent recovery of persistence diagrams and quantification of convergence rates.\\
More closely related to our setting and to signal processing, \cite{BCL2009} and \cite{Chung2009} tackle the problem of estimating persistence diagrams from the superlevel sets (or, equivalently, sublevel sets) in the nonparametric regression setting. In particular, in both papers, they show that a plug-in estimator based on a kernel estimate of the signal converges at rate $O((\log(n)/n)^{\alpha/(2\alpha+d)})$ over the classes of $(L,\alpha)-$Hölder signals on $\mathbb{R}^{d}$. \\\\
The general approach followed in all of these works involves estimating the signal or the density, quantifying the estimation error in sup-norm, Hausdorff distance, or Gromov-Hausdorff distance, and bounding the bottleneck error on the diagram using stability theorems. The power and importance of stability theorems are evident as they enable the direct translation of convergence rates in sup-norm (or similar metrics) to convergence rates in bottleneck distance over diagrams. To further emphasize the importance of stability theorems, the convergence rates derived from such results are generally minimax-optimal for regular function classes. For instance, in the density model, this has been demonstrated in \cite{ChazalGlisseMichel} and \cite{Fasy}. Similarly, in the nonparametric regression setting, although it is not explicitly stated in the original papers, the rates obtained in \cite{BCL2009} and \cite{Chung2009} can also be shown to be minimax-optimal (see, e.g., Section \ref{section LB}).\\\\
However, stability-based approaches may come at the expense of generality. While persistence diagrams offer a robust summary of topological features, they encode significantly less information than the full signal. Moreover, noise does not necessarily affect persistence diagrams in the same way it impacts the raw data, meaning that sup-norm stability can sometimes obscure the diagrams’ intrinsic robustness. In this direction, \cite{Turkes21} offers compelling empirical evidence on how common types of noise influence persistence diagrams in image analysis. Building on the idea that persistence diagrams can, in some cases, be easier to infer than the full signal, \cite{Bobrowski} explores this perspective in both the density estimation and nonparametric regression settings. Departing from traditional approaches, the authors propose a novel estimation method based on image persistence and show that it achieves quasi-consistency under minimal assumptions, thereby allowing for the consideration of broader function classes. We believe that these initial results highlight the necessity of moving away from sup-norm stability and, more broadly, emphasize the value of using topological or geometric descriptors when conventional nonparametric techniques yield unsatisfactory results. Nevertheless, their framework appears too broad to quantify convergence rates or even prove proper consistency, motivating the identification of more restricted function classes for which stronger and more precise results can be obtained. The present work aims to address this by studying the problem of estimating persistence diagrams in the nonparametric regression framework, over broad classes of piecewise-continuous signals. As highlighted previously, such classes of signals are ubiquitous in practical applications, particularly when dealing with image or voxel data, and estimating the full structure of these signals can be especially challenging. In such contexts, having access to descriptors, like persistence diagrams, that are easier to infer becomes particularly valuable.

\subsection*{Framework}
\textbf{Regularity assumptions.} For a set $A\subset[0,1]^{d}$, we denote $\overline{A}$ its closure, $A^{\circ}$ its interior, $\partial A$ its boundary and $A^{c}$ its complement. Let $f:[0,1]^{d}\rightarrow \mathbb{R}$, we make the following assumptions about $f$:\\\\
\textbf{A0.} $f$ is bounded by $M$, i.e., for all $x\in[0,1]^{d}$, $|f(x)|\leq M$.\\\\
\textbf{A1.} $f$ is a piecewise $(L,\alpha)-$Hölder-continuous function, i.e., there exist $M_{1},...,M_{l}$ disjoint open sets of $[0,1]^{d}$ such that
$$\bigcup_{i=1}^{l}\overline{M_{i}}=[0,1]^{d}$$ and for all $i\in\{1,...,l\}$ and $x,y\in M_{i}$,
$$|f(x)-f(y)|\leq L\|x-y\|_{2}^{\alpha}.$$
\textbf{A2}. $f$ satisfies, for all $x_{0}\in [0,1]^{d}$,
$$\underset{x\in \bigcup_{i=1}^{l}M_{i}\rightarrow x_{0}}{\liminf}f(x)=f(x_{0})$$
In this context, two signals, differing only on a null set, are statistically indistinguishable. Persistent homology is sensitive to point-wise irregularity. As a result, two signals differing only on a null set can have very different persistence diagrams. Assumption \textbf{A2} prevents such scenarios. Furthermore, note that for any piecewise Hölder-continuous function $f$, there exists a modification $\Tilde{f}$ satisfying Assumption \textbf{A2} such that $f$ and $\Tilde{f}$ coincide except on a set of measure zero. Following this remark, note that, although this assumption is necessary for the proofs of our main results, it is more of a convention to avoid statistical indistinguishability than a true limitation, at least from a practical perspective.\\\\
\textbf{A3.} $\bigcup_{i=1}^{l}\partial M_{i}\cap]0,1[^{d}$ is a $C^{1,1}$ hypersurface, and, for $R>0$, it satisfies:
$$\operatorname{reach}\left(]0,1[^{d}\cap\bigcup_{i=1}^{l}\partial M_{i}\right)\geq R \text{ and } d_{2}\left(\bigcup_{i=1}^{l}\partial M_{i}\cap]0,1[^{d},\partial [0,1]^{d}\right)\geq R$$
where, for a set $A\subset \mathbb{R}^{d}$, $\operatorname{reach}(A)=$
$$\sup \left\{r \in \mathbb{R}: \forall x \in \mathbb{R}^d \backslash A \text { with } d_{2}(\{x\}, A)<r,\exists!y \in A \text { s.t. } ||x-y||_{2}=d_{2}(\{x\}, A)\right\}$$
and,
    $$d_{2}(A,B)=\max\left(\sup_{x\in B}\inf\limits_{y\in A}||x-y||_{2},\sup_{x\in A}\inf\limits_{y\in B}||x-y||_{2} \right).$$
is the Hausdorff distance between two sets $A$ and $B$. The reach is a curvature measure introduced by \cite{Fed59}. An intuitive way to understand it is that if $A$ has a reach $R$, we can roll a ball of radius $R$ along the boundary of $A$. Positive reach assumptions are fairly common in statistical TDA \citep{pmlr-v22-balakrishnan12a,NPS08} and geometric inference \citep{JMLR:v13:genovese12a, KRW19, AL19,AKCMRW19, Berenfeld_2021}. Here, the first part of Assumption \textbf{A3} ensures geometric control over the union of the boundary of the $M_{i}$, $i\in\{1,...,l\}$, in the interior of $[0,1]^{d}$, for example, it prevents the appearance of cusps, corners, and multiple points. The second part ensures that discontinuities do not appear too close to the boundary of the cube $[0,1]^{d}$.\\\\
\begin{figure}[h]
\centering
\begin{subfigure}{.5\textwidth}
  \centering
\begin{tikzpicture}[x=0.75pt,y=0.75pt,yscale=-1,xscale=1]

\draw   (248.6,57.2) -- (442,57.2) -- (442,250.6) -- (248.6,250.6) -- cycle ;
\draw  [dash pattern={on 4.5pt off 4.5pt}] (316,88.6) .. controls (331,70.6) and (388,54) .. (368,74) .. controls (348,94) and (346,103.6) .. (366,133.6) .. controls (386,163.6) and (309,202.6) .. (308,168.6) .. controls (307,134.6) and (258,120.6) .. (257,108.6) .. controls (256,96.6) and (255.5,92.15) .. (266.75,83.38) .. controls (278,74.6) and (301,106.6) .. (316,88.6) -- cycle ;
\draw  [dash pattern={on 4.5pt off 4.5pt}] (382,142.6) .. controls (402,132.6) and (445,131.6) .. (433,148.6) .. controls (421,165.6) and (439,209.6) .. (427,222.6) .. controls (415,235.6) and (408,206.2) .. (390,235.6) .. controls (372,265) and (362,152.6) .. (382,142.6) -- cycle ;
\draw  [dash pattern={on 4.5pt off 4.5pt}] (268,176) .. controls (278,171) and (312,170.6) .. (292,190.6) .. controls (272,210.6) and (332,180.6) .. (352,210.6) .. controls (372,240.6) and (301,256.6) .. (281,226.6) .. controls (261,196.6) and (253,208.5) .. (253,198.5) .. controls (253,188.5) and (258,181) .. (268,176) -- cycle ;
\draw  [dash pattern={on 4.5pt off 4.5pt}] (379.4,189.8) .. controls (379.4,178.31) and (388.71,169) .. (400.2,169) .. controls (411.69,169) and (421,178.31) .. (421,189.8) .. controls (421,201.29) and (411.69,210.6) .. (400.2,210.6) .. controls (388.71,210.6) and (379.4,201.29) .. (379.4,189.8) -- cycle ;
\draw  [dash pattern={on 4.5pt off 4.5pt}] (348,140.6) .. controls (346,126.6) and (349,110.6) .. (322,105.6) .. controls (295,100.6) and (270,100.6) .. (283,107.6) .. controls (296,114.6) and (350,154.6) .. (348,140.6) -- cycle ;

\draw (314,109.4) node [anchor=north west][inner sep=0.75pt]    {$M_{1}$};
\draw (317,152.4) node [anchor=north west][inner sep=0.75pt]    {$M_{2}$};
\draw (295,210.4) node [anchor=north west][inner sep=0.75pt]    {$M_{3}$};
\draw (389,179.4) node [anchor=north west][inner sep=0.75pt]    {$M_{4}$};
\draw (388,91.4) node [anchor=north west][inner sep=0.75pt]    {$M_{6}$};
\draw (396,142.4) node [anchor=north west][inner sep=0.75pt]    {$M_{5}$};

\end{tikzpicture}

\caption{Assumption \textbf{A3} verified}
\label{fig:sub1}
\end{subfigure}%
\begin{subfigure}{.5\textwidth}
\centering
\begin{tikzpicture}[x=0.75pt,y=0.75pt,yscale=-1,xscale=1]

\draw  [draw opacity=0][fill={rgb, 255:red, 208; green, 2; blue, 27 }  ,fill opacity=1 ] (295.36,200.6) .. controls (295.36,198.81) and (296.81,197.36) .. (298.6,197.36) .. controls (300.39,197.36) and (301.84,198.81) .. (301.84,200.6) .. controls (301.84,202.39) and (300.39,203.84) .. (298.6,203.84) .. controls (296.81,203.84) and (295.36,202.39) .. (295.36,200.6) -- cycle ;
\draw  [draw opacity=0][fill={rgb, 255:red, 208; green, 2; blue, 27 }  ,fill opacity=1 ] (288.47,119.47) .. controls (288.47,117.68) and (289.92,116.23) .. (291.71,116.23) .. controls (293.5,116.23) and (294.96,117.68) .. (294.96,119.47) .. controls (294.96,121.26) and (293.5,122.71) .. (291.71,122.71) .. controls (289.92,122.71) and (288.47,121.26) .. (288.47,119.47) -- cycle ;
\draw  [draw opacity=0][fill={rgb, 255:red, 208; green, 2; blue, 27 }  ,fill opacity=1 ] (336.76,57.6) .. controls (336.76,55.81) and (338.21,54.36) .. (340,54.36) .. controls (341.79,54.36) and (343.24,55.81) .. (343.24,57.6) .. controls (343.24,59.39) and (341.79,60.84) .. (340,60.84) .. controls (338.21,60.84) and (336.76,59.39) .. (336.76,57.6) -- cycle ;
\draw  [draw opacity=0][fill={rgb, 255:red, 208; green, 2; blue, 27 }  ,fill opacity=1 ] (245.33,111.59) .. controls (245.33,109.8) and (246.78,108.35) .. (248.57,108.35) .. controls (250.36,108.35) and (251.81,109.8) .. (251.81,111.59) .. controls (251.81,113.38) and (250.36,114.83) .. (248.57,114.83) .. controls (246.78,114.83) and (245.33,113.38) .. (245.33,111.59) -- cycle ;
\draw  [draw opacity=0][fill={rgb, 255:red, 208; green, 2; blue, 27 }  ,fill opacity=1 ] (367.9,57.59) .. controls (367.9,55.8) and (369.35,54.35) .. (371.14,54.35) .. controls (372.93,54.35) and (374.38,55.8) .. (374.38,57.59) .. controls (374.38,59.38) and (372.93,60.83) .. (371.14,60.83) .. controls (369.35,60.83) and (367.9,59.38) .. (367.9,57.59) -- cycle ;
\draw  [draw opacity=0][fill={rgb, 255:red, 208; green, 2; blue, 27 }  ,fill opacity=1 ] (429.62,116.5) .. controls (429.62,114.71) and (431.07,113.26) .. (432.86,113.26) .. controls (434.65,113.26) and (436.1,114.71) .. (436.1,116.5) .. controls (436.1,118.29) and (434.65,119.74) .. (432.86,119.74) .. controls (431.07,119.74) and (429.62,118.29) .. (429.62,116.5) -- cycle ;
\draw   (248.6,57.2) -- (442,57.2) -- (442,250.6) -- (248.6,250.6) -- cycle ;
\draw  [dash pattern={on 4.5pt off 4.5pt}]  (250,112) .. controls (314,143.6) and (300,87.6) .. (340,57.6) ;
\draw  [dash pattern={on 4.5pt off 4.5pt}]  (248.6,200.6) -- (298.6,200.6) ;
\draw  [dash pattern={on 4.5pt off 4.5pt}]  (298.6,200.6) -- (298.6,250.6) ;
\draw  [dash pattern={on 4.5pt off 4.5pt}] (352,197.6) .. controls (362,202.6) and (362,238.6) .. (414,149) .. controls (466,59.4) and (394,179) .. (414,209) .. controls (434,239) and (360,259.6) .. (340,229.6) .. controls (320,199.6) and (342,192.6) .. (352,197.6) -- cycle ;
\draw  [dash pattern={on 4.5pt off 4.5pt}] (295,117) .. controls (315,107) and (353,151.6) .. (333,171.6) .. controls (313,191.6) and (315,207) .. (295,177) .. controls (275,147) and (275,127) .. (295,117) -- cycle ;
\draw  [dash pattern={on 4.5pt off 4.5pt}] (365,59) .. controls (385,49) and (423,93.6) .. (403,113.6) .. controls (383,133.6) and (375,190.6) .. (355,160.6) .. controls (335,130.6) and (345,69) .. (365,59) -- cycle ;

\draw (268,72.4) node [anchor=north west][inner sep=0.75pt]    {$M_{1}$};
\draw (265,215.4) node [anchor=north west][inner sep=0.75pt]    {$M_{2}$};
\draw (295,146.4) node [anchor=north west][inner sep=0.75pt]    {$M_{3}$};
\draw (359,101.4) node [anchor=north west][inner sep=0.75pt]    {$M_{4}$};
\draw (371,212.4) node [anchor=north west][inner sep=0.75pt]    {$M_{5}$};
\draw (414,71.4) node [anchor=north west][inner sep=0.75pt]    {$M_{6}$};

\end{tikzpicture}

  \caption{Assumption \textbf{A3} not verified}
  \label{fig:sub2}
\end{subfigure}
\caption{Illustration of Assumption \textbf{A3}}
\label{fig:test}
\end{figure}
We denote $S_{d}(M,L,\alpha,R)$ the set of such functions. The combination of Assumptions \textbf{A1}, \textbf{A2} and \textbf{A3} ensures that the persistence diagrams of $f\in S_{d}(M,L,\alpha,R)$ are well-defined (see Appendix \ref{q-tame appendix}, Proposition \ref{LmmQtame1}).\\\\
\textbf{Statistical model.} We consider the classical nonparametric regression setting with fixed regular grid design, observing $n=N^{d}$ points, 
$$X_{i}=f(x_{i})+\sigma\varepsilon_{i}$$
with $(x_{i})_{1\leq i\leq n}$ the points of the regular grid $G_{1/N}$ over $[0,1]^{d}$ of step size $1/N$, $\sigma$ the level of noise and $(\varepsilon_{i})_{1\leq i\leq n}$ independent standard Gaussian variables (i.e., $\mathcal{N}(0,1)$). This provides a standard and general framework for our analysis. In terms of applications, it is particularly well-suited for modeling noisy images, where the observed points correspond to pixels (or more generally, voxels) observed with additive noise.\\\\
\textbf{Estimator.} In this context, our goal is to estimate $\operatorname{dgm}(f)$, the collection of persistence diagrams associated with the sublevel sets of $f$, by computing the diagram associated with the sublevel sets of a histogram estimator of the signal.\\\\
More formally, let $h>0$ such that $Nh$ is an integer (and so is $nh^{d}$), consider $G_{h}$ the regular orthogonal grid over $[0,1]^{d}$ of step $h$ and $C_{h}$ the collection of all the closed hypercubes of side $h$ composing $G_{h}$.  For all $\lambda\in\mathbb{R}$, we define the estimator of $\mathcal{F}_{\lambda}=f^{-1}(]-\infty,\lambda])$ as follows:
$$\widehat{\mathcal{F}}_{\lambda}=\bigcup\limits_{H\in C_{h,\lambda}}H\text{, with }C_{h,\lambda}=\left\{H\in C_{h}\text{ such that }\frac{1}{nh^{d}}\sum\limits_{x_i\in H}X_{i}\leq \lambda\right\}.$$
the set $\widehat{\mathcal{F}}_{\lambda}$ represents the sublevel set indexed by $\lambda$ of the histogram estimator of $f$ and $nh^{d}$ represents the cardinality of the set $H\cap G_h$ for all $H\in C_{h}$. Next, we consider, for all $s\in\{0,...,d\}$, $\widehat{\mathbb{V}}_{f,s}$ the persistence module induced by the collection of homology groups: $$\left(H_{s}\left(\widehat{\mathcal{F}}_{\lambda}\right)\right)_{\lambda\in\mathbb{R}}$$ 
equipped with the linear maps $\widehat{v}_{\lambda}^{\lambda'}$ induced by the inclusion $\widehat{\mathcal{F}}_{\lambda}\subset \widehat{\mathcal{F}}_{\lambda^{'}}$, $\lambda^{'}\geq\lambda$. We define $\widehat{\operatorname{dgm}(f)}$ as the associated collection of persistence diagrams.
\begin{figure}
\centering
\begin{tikzpicture}[x=0.75pt,y=0.75pt,yscale=-1,xscale=1]

\draw (153.89,114.32) node  {\includegraphics[width=149.91pt,height=126.38pt]{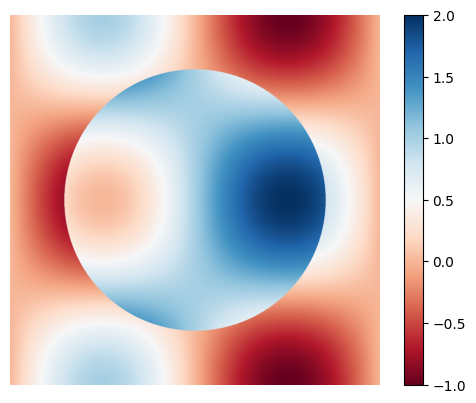}};
\draw (393.27,111.27) node  {\includegraphics[width=149.91pt,height=115.93pt]{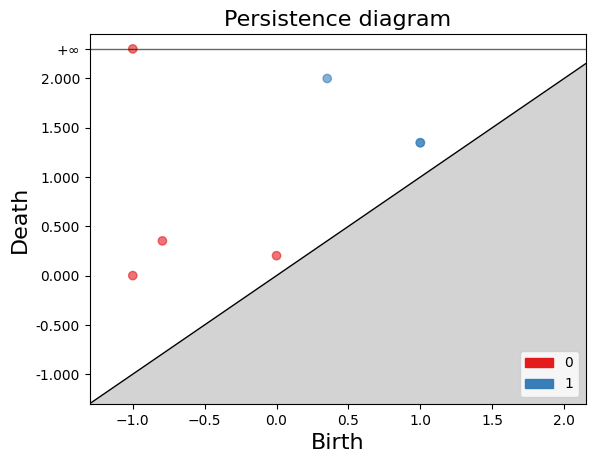}};
\draw  [draw opacity=0][fill={rgb, 255:red, 255; green, 255; blue, 255 }  ,fill opacity=1 ] (302.39,56) -- (305.04,139.5) -- (296.47,139.77) -- (293.81,56.27) -- cycle ;
\draw (154.36,307.73) node  {\includegraphics[width=156.54pt,height=127.97pt]{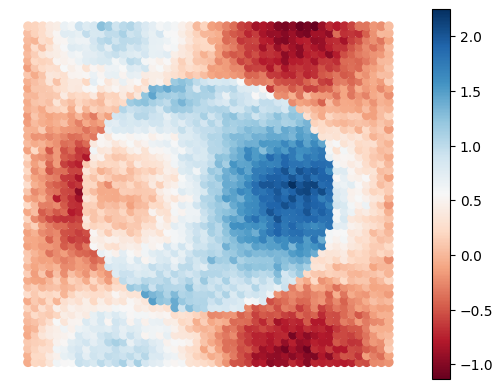}};
\draw (290.17,500.75) node  {\includegraphics[width=149.91pt,height=124.25pt]{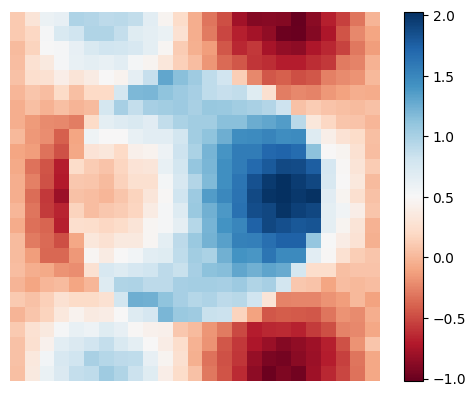}};
\draw (393.27,303.63) node  {\includegraphics[width=149.91pt,height=115.93pt]{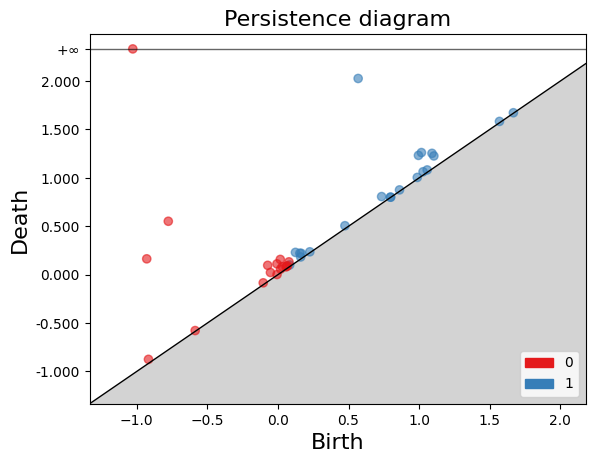}};
\draw  [draw opacity=0][fill={rgb, 255:red, 255; green, 255; blue, 255 }  ,fill opacity=1 ] (304.34,216.43) -- (304.66,283.31) -- (298.29,283.33) -- (297.97,216.46) -- cycle ;
\draw  [draw opacity=0][fill={rgb, 255:red, 255; green, 255; blue, 255 }  ,fill opacity=1 ] (367.47,177.63) -- (434.77,177.63) -- (434.77,186.54) -- (367.47,186.54) -- cycle ;
\draw  [draw opacity=0][fill={rgb, 255:red, 255; green, 255; blue, 255 }  ,fill opacity=1 ] (374.45,370.5) -- (441.75,370.5) -- (441.75,377.29) -- (374.45,377.29) -- cycle ;
\draw  [draw opacity=0][fill={rgb, 255:red, 255; green, 255; blue, 255 }  ,fill opacity=1 ] (447.6,44.45) -- (363.54,45.27) -- (363.46,36.75) -- (447.51,35.93) -- cycle ;
\draw  [draw opacity=0][fill={rgb, 255:red, 255; green, 255; blue, 255 }  ,fill opacity=1 ] (368.79,228.62) -- (446.75,228.62) -- (446.75,237.06) -- (368.79,237.06) -- cycle ;
\draw    (253.98,104.65) -- (296.86,104.65) ;
\draw [shift={(298.86,104.65)}, rotate = 180] [color={rgb, 255:red, 0; green, 0; blue, 0 }  ][line width=0.75]    (10.93,-3.29) .. controls (6.95,-1.4) and (3.31,-0.3) .. (0,0) .. controls (3.31,0.3) and (6.95,1.4) .. (10.93,3.29)   ;
\draw    (445.01,188.54) -- (445.01,230.08) ;
\draw [shift={(445.01,232.08)}, rotate = 270] [color={rgb, 255:red, 0; green, 0; blue, 0 }  ][line width=0.75]    (10.93,-3.29) .. controls (6.95,-1.4) and (3.31,-0.3) .. (0,0) .. controls (3.31,0.3) and (6.95,1.4) .. (10.93,3.29)   ;
\draw [shift={(445.01,186.54)}, rotate = 90] [color={rgb, 255:red, 0; green, 0; blue, 0 }  ][line width=0.75]    (10.93,-3.29) .. controls (6.95,-1.4) and (3.31,-0.3) .. (0,0) .. controls (3.31,0.3) and (6.95,1.4) .. (10.93,3.29)   ;
\draw    (140.85,410.85) -- (140.85,497.22) -- (186.25,497.22) ;
\draw [shift={(188.25,497.22)}, rotate = 180] [color={rgb, 255:red, 0; green, 0; blue, 0 }  ][line width=0.75]    (10.93,-3.29) .. controls (6.95,-1.4) and (3.31,-0.3) .. (0,0) .. controls (3.31,0.3) and (6.95,1.4) .. (10.93,3.29)   ;
\draw    (397.61,497.22) -- (445.01,497.22) -- (445.01,412.85) ;
\draw [shift={(445.01,410.85)}, rotate = 90] [color={rgb, 255:red, 0; green, 0; blue, 0 }  ][line width=0.75]    (10.93,-3.29) .. controls (6.95,-1.4) and (3.31,-0.3) .. (0,0) .. controls (3.31,0.3) and (6.95,1.4) .. (10.93,3.29)   ;
\draw  [draw opacity=0][fill={rgb, 255:red, 255; green, 255; blue, 255 }  ,fill opacity=1 ] (304.91,271.87) -- (304.24,349.34) -- (295.75,349.27) -- (296.41,271.8) -- cycle ;
\draw    (140.85,194.94) -- (140.85,225.68) ;
\draw [shift={(140.85,227.68)}, rotate = 270] [color={rgb, 255:red, 0; green, 0; blue, 0 }  ][line width=0.75]    (10.93,-3.29) .. controls (6.95,-1.4) and (3.31,-0.3) .. (0,0) .. controls (3.31,0.3) and (6.95,1.4) .. (10.93,3.29)   ;

\draw (85.11,12.43) node [anchor=north west][inner sep=0.75pt]   [align=left] {Original signal };
\draw (77.01,390.86) node [anchor=north west][inner sep=0.75pt]   [align=left] {Noisy observations };
\draw (208.5,579.29) node [anchor=north west][inner sep=0.75pt]   [align=left] {Histogram estimator };
\draw (306.52,390.86) node [anchor=north west][inner sep=0.75pt]   [align=left] {Estimated persistence diagram };
\draw (325.3,12.43) node [anchor=north west][inner sep=0.75pt]   [align=left] {True persistence diagram};
\draw (362.98,190.77) node [anchor=north west][inner sep=0.75pt]   [align=left] {Comparison \\ \ \ \ \ in $\displaystyle d_{b}$};

\end{tikzpicture}

\caption{Numerical illustration of the estimation procedures for $f(x,y)=\cos(2\pi x)\sin(2\pi x)+\mathbb{1}_{(x-1/2)^{2}+(y-1/2)^{2}<1/8}$, $\sigma=0.1$, $n=2500$, $h=1/4(\frac{\log(n)}{n})^{1/4}$.}
    \label{fig:Illustration Procedure}
\end{figure}
A natural question is how to calibrate the window size $h$ for signals. From the proof of Lemma \ref{lmm1} (Appendix \ref{Proof interleavish}), a good choice is taking $h$ such that
\begin{equation}
\label{calibration}
h^{\alpha}>\sqrt{\frac{\log\left(1/h^{d}\right)}{nh^{d}}}
\end{equation}
thus, we can choose:
$$h\asymp\left(\frac{\log(n)}{n}\right)^{\frac{1}{d+2\alpha}}.$$
\subsection*{Contribution}
In this framework, we study the convergence properties of the estimator $\widehat{\operatorname{dgm}(f)}$ with respect to the bottleneck distance. We provide a rigorous analysis showing that it achieves the following convergence rates over the classes $S_d(M, L, \alpha, R)$.
\begin{customthm}{2}
$$\underset{f\in S_{d}(M,L,\alpha,R)}{\sup}\quad\mathbb{E}\left(d_{b}\left(\widehat{\operatorname{dgm}(f)},\operatorname{dgm}(f)\right)\right)\lesssim \left(\frac{\log(n)}{n}\right)^{\frac{\alpha}{d+2\alpha}}.$$
\end{customthm}
Furthermore, we establish that these rates are optimal, in the minimax sense, over the classes $S_{d}(M,L,\alpha,R)$. 
\begin{customthm}{3}
    $$\underset{\widehat{\operatorname{dgm}(f)}}{\inf}\quad\underset{f\in S_{d}(M,L,\alpha,R)}{\sup}\quad\mathbb{E}\left(d_{b}\left(\widehat{\operatorname{dgm}(f)},\operatorname{dgm}(f)\right)\right)\gtrsim \left(\frac{\log(n)}{n}\right)^{\frac{\alpha}{d+2\alpha}}.$$
\end{customthm}
Interestingly, these rates coincide with the rates for Hölder spaces, obtained in Corollary 4.4 of \cite{BCL2009}. This means that, up to a multiplicative constant, there is no additional cost to consider signals in $S_{d}(M,L,\alpha,R)$. It demonstrates the gain of breaking free from the usual analysis approach in TDA and the robustness to discontinuities of persistence diagram estimation. Furthermore, as such irregularities are challenging to handle for signal estimation, these results promote the use of persistence diagrams while processing noisy (irregular) signals.
The paper is organized as follows. Section \ref{Background} provides some background on persistent homology. Section \ref{Upperbounds section} is dedicated to the proof of Theorem \ref{estimation borne sup}. Section \ref{section LB} is dedicated to the proof of Theorem \ref{lowerbound}. Section \ref{sec: illustration} provides some numerical illustrations of our results in the context of image analysis. Appendix \ref{q-tame appendix},\ref{technical appendix}, and \ref{appendix: claim} contain proofs of technical lemmas, propositions, and claims invoked in this paper.
\section{Topological background}
\label{Background}
This section aims to provide the essential topological background needed to follow this paper.
\subsection{Homology and invariance under deformation retraction}
We provide a brief introduction to (singular) homology, emphasizing key properties that are central to our work. For further details, we refer the reader to \cite{Hatcher}, which inspired this section. We begin by introducing the notion of singular simplex. 
\begin{defi}Let a topological space $X$ and $\Delta^{s}$ the standard $s-$simplex (i.e., the $s$-dimensional simplex whose vertices are the $s+1$ standard unit vectors in $\mathbb{R}^{s+1}$, denoted $v_1,...,v_s$). A singular $s-$simplex of $X$ is a continuous map $\sigma: \Delta^{s}\rightarrow X$. 
\end{defi}
Singular simplices are continuous maps from a standard simplex to the space $X$. They are not required to be homeomorphisms of any kind and can be widely singular, which allows for very general construction of homology groups. From these singular simplices, we can define the space of singular chains and the boundary operator.
\begin{defi}
	The group of $s-$chains on $X$, $C_{s}(X)$, for a field $\mathbb{K}$, is the abelian group of finite formal sums $\sum_{i}c_{i}\sigma_{i}$ with $\sigma_{i}$ a singular $s$-simplex of $X$ and $c_{i}\in\mathbb{K}$.
\end{defi}
\begin{defi}
	The boundary $\partial_{s} \sigma$ of a $s$-singular simplex $\sigma$ is defined by:
	$$\partial_{s} \sigma=\sum_{i=0}^s(-1)^i\sigma|_{\left[v_0, \cdots, \widehat{v}_i, \cdots, v_s\right]}.
	$$
    where $\left[v_0, \cdots, \hat{v}_i, \cdots, v_s\right]=\left[v_0, \cdots,v_{i-1},v_{i+1}, \cdots, v_s\right]$. The map $\sigma \mid_{\left[v_0, \cdots, \hat{v}_i, \cdots, v_s\right]}$ can then be thought of as a map from $\Delta^{s-1}$ to $X$.
	The boundary operator $\partial_s: C_{s}(X) \rightarrow C_{s-1}(X)$ is then defined by:
	\begin{align*}
		\partial_{s}:C_{s}(\mathcal{K}) & \rightarrow C_{s-1}(\mathcal{K}) \\
		c=\sum_{i}c_{i}\sigma_{i} & \rightarrow \partial_{s} c=\sum_{i} c_{i}\partial_{s} \sigma_{i}.
	\end{align*}
\end{defi}
A crucial property of the boundary operator is that applying it twice always gives zero: $\partial_{s} \circ \partial_{s+1}=0$. We can then consider two subgroups of $C_{s}(X)$: 
\begin{itemize}
    \item the $s-$cycles: $Z_{s}=\operatorname{Ker}(\partial_{s})$, i.e., the $s$-chains whose boundary is zero.
    \item the $s-$boundaries: $B_{s}=\operatorname{Im}(\partial_{s+1})$ i.e., the $s-$chains that are themselves boundaries of higher-dimensional chains.
    \end{itemize}
    As $\partial_{s} \circ \partial_{s+1}=0$, $B_{s}$ is a normal subgroup of $Z_{s}$. We can then define the $s-$th homology groups as $s-$cycles quotiented by $s-$boundaries.
\begin{defi}
	Let $X$ be a topological space and $s\in\mathbb{N}$. We can define the $s-$th (singular) homology group of $\mathcal{K}$ (on the field $\mathbb{K}$) by:
	$$H_{s}(X)= Z_{s}/B_{s}$$
\end{defi}
Intuitively, these groups identify the ``holes'' in the space $X$ at a given dimension: $H_{0}(X)$ identifies its connected components, $H_{1}(X)$ its loops, $H_{2}(X)$ its enclosed cavities, and so on.\\\\
A key property of homology, which we extensively exploit in this work, is that it is invariant under deformation retraction, meaning that if a space can be continuously "shrunk" to a subspace without tearing or gluing, its homology groups remain the same. The construction of such deformations is a classical technique that lies at the heart of many foundational results in algebraic topology.
\begin{defi}
\label{defi: def retract}
A subspace $A$ of $X$ is called a deformation retract of $X$ if there is a continuous $F: X \times [0,1] \rightarrow X$ such that for all $x \in X$ and $a \in A$,
\begin{itemize}
    \item $F(x, 0)=x$ 
    \item $F(x, 1) \in A$
    \item $F(a, 1)=a$.
\end{itemize}
The function $F$ is then called a (deformation) retraction from $X$ to $A$.
\end{defi}
 A deformation retraction \( F \) from \( X \) to \( A \) ensures that \( X \) and \( A \) are ``homotopy equivalent'' \citep[see][pages~2--3]{Hatcher}, and thus are topologically similar. In particular, \( F \) induces isomorphisms between the homology groups of \( X \) and \( A \). More precisely, for all $t\in[0,1]$, $F(.,t)$ induces a group morphism $F^{\#}(.,t):C_{s}(X)\rightarrow C_{S}(X)$ between $s-$chains of $X$ defined by composing each singular $s$-simplex $\sigma: \Delta^s \rightarrow X$ with $F(.,t)$ to get a singular $s$-simplex $F^{\#}(\sigma,t)=F(.,t)\circ\sigma: \Delta^s \rightarrow X$, then extending $F^{\#}(.,t)$ linearly via $F^{\#}\left(\sum_i n_i \sigma_i,t\right)=\sum_i n_i F^{\#}\left(\sigma_i,t\right)=$ $\sum_i n_i F(.,t)\circ \sigma_i$. The group morphism $F^{*}(.,t):H_{s}(X)\rightarrow H_{s}(X)$ defined by $[C]\mapsto [F^{\#}(C,t)]$ can be shown to be an isomorphism for all $t\in[0,1]$ \citep[see][pages 110-113]{Hatcher}. In particular, $F^{*}(.,1):H_{s}(X)\rightarrow H_{s}(A)$ is an isomorphism. 
\subsection{Filtrations, persistence modules and diagrams}
The idea behind persistent homology is to encode the evolution of the topology (in the homology sense) of a nested family of topological spaces, called a filtration. As we move along indices, topological features (connected components, cycles, cavities, ...) can appear or disappear (existing connected components merge, cycles or cavities are filled, ...). Two keys to formalize this idea, which we use throughout this paper, are the notions of filtration and of persistence module.
\begin{defi}
Let $\Lambda \subset \mathbb{R}$ be a set of indices. A filtration indexed by $\Lambda$ is a family $\left(\mathcal{K}_\lambda\right)_{\lambda \in \Lambda}$ of topological spaces satisfying,  for all $ \lambda, \lambda^{\prime} \in \Lambda, \lambda \leqslant \lambda^{\prime}$
$$
\mathcal{K}_\lambda \subset \mathcal{K}_{\lambda^{\prime}}
.$$
\end{defi}
The typical filtration that we will consider in this paper is, for a function $f:\mathbb{R}^{d}\rightarrow\mathbb{R}$, the family of sublevel sets $\left(\mathcal{F}_{\lambda}\right)_{\lambda\in\mathbb{R}}=\left(f^{-1}(]-\infty,\lambda])\right)_{\lambda\in\mathbb{R}}$.
\begin{defi}
 Let $\Lambda\subset \mathbb{R}$ be a set of indices. A persistence module over $\Lambda$ is a family $\mathbb{V}=\left(\mathbb{V}_\lambda\right)_{\lambda \in \Lambda}$ of vector spaces equipped with linear maps $v_\lambda^{\lambda^{\prime}}: \mathbb{V}_\lambda \rightarrow \mathbb{V}_{\lambda^{\prime}}$ such that, for all $ \lambda \leqslant \lambda^{\prime} \leqslant \lambda^{\prime \prime} \in \Lambda$,
$$ v_\lambda^\lambda=i d$$
and 
$$v_{\lambda^{\prime}}^{\lambda^{\prime \prime}} \circ v_\lambda^{\lambda^{\prime}}=v_\lambda^{\lambda^{\prime \prime}}.$$
\end{defi}
The typical persistence module that we will consider in this paper is, for a function $f:\mathbb{R}^{d}\rightarrow\mathbb{R}$ and $s\in\mathbb{N}$, the family of homology groups $\mathbb{V}_{f,s}=\left(H_{s}\left(\mathcal{F}_{\lambda}\right)\right)_{\lambda\in\mathbb{R}}$ equipped with $v_{\lambda}^{\lambda'}$ the linear maps induced by the inclusion $\mathcal{F}_{\lambda}\subset\mathcal{F}_{\lambda^{\prime}}$. To be more precise, in this paper, $H_{s}(.)$ is the singular homology functor in degree $s$ with coefficients in a field (typically $\mathbb{Z}/2\mathbb{Z}$). Hence, $H_{s}\left(\mathcal{F}_{\lambda}\right)$ is a vector space.\\\\
Under the \( q \)-tameness assumption (see Definition \ref{def: q-tame} in Appendix \ref{q-tame appendix}), a persistence module \( \mathbb{V} \) can be fully described by a multi-set in \( \mathbb{R}^2 \), known as its persistence diagram:  
\[
\operatorname{dgm}(\mathbb{V}) = \{(b_j, d_j) \mid j \in J\} \subset \overline{\mathbb{R}}^2.
\]  
In this representation, each \( b_j \) represents the birth time of a topological feature (e.g., the emergence of a connected component or the formation of a cycle), while \( d_j \) represents its death time (e.g., the merging of two components or the filling of a cycle). The difference \( d_j - b_j \) corresponds to the feature’s lifetime. For a detailed construction of persistence diagrams, we refer the reader to \cite{chazal2013}. In cases where we consider the collection of modules \( (\mathbb{V}_{f,s})_{s\in\mathbb{N}} \) associated with the sublevel sets filtration of \( f \), we denote by \( \operatorname{dgm}(f) \) the collection of associated persistence diagrams.

\begin{figure}[H]
\centering
\begin{subfigure}{.5\textwidth}
  \centering
  \includegraphics[scale=0.55]{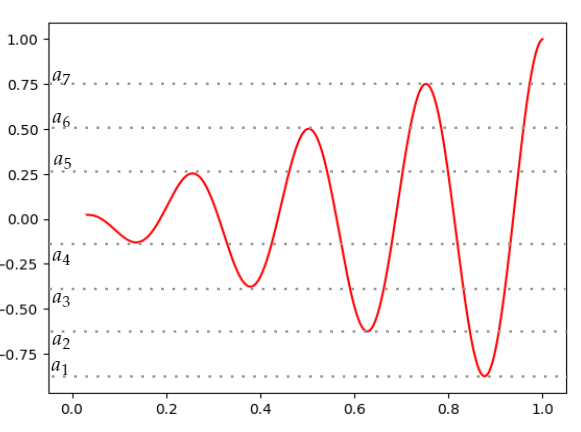}
  \caption{graph of $f$}
  \label{fig:sub1}
\end{subfigure}%
\begin{subfigure}{.5\textwidth}
  \centering
  \includegraphics[scale=0.55]{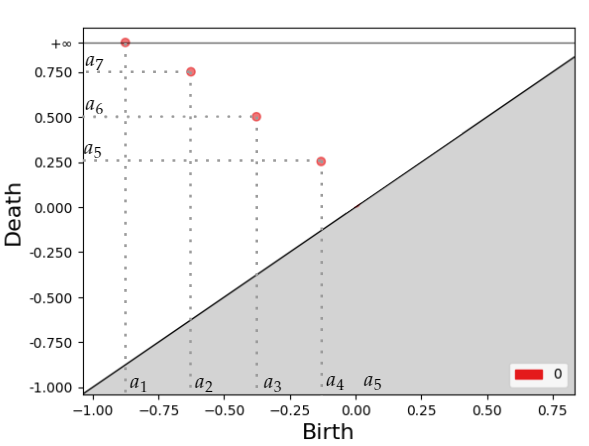}
  \caption{$H_{0}$-persistence diagram of $f$}
  \label{fig:sub2}
\end{subfigure}
\caption{Graph of $f(x)=x\cos(8\pi x)$ over $[0,1]$ and the (0-th) persistence diagram associated to its sublevel sets filtration. $a_{1}$, ..., $a_{4}$ correspond to local minima of $f$ and thus birth times in $\operatorname{dgm}(f)$. $a_{5}$, ..., $a_{7}$ correspond to local maxima of $f$ and thus death times in $\operatorname{dgm}(f)$.}
\end{figure}
\subsection{Bottleneck distance, interleaved modules and algebraic stability}
To compare persistence diagrams, we need a distance. A popular such distance, due to its stability properties, is the bottleneck distance. This distance is defined as the infimum over all matchings between points in diagrams of the maximal sup norm distance between two matched points. To allow for matchings between diagrams that do not contain the same number of points, the diagonal is added to the diagrams. This distance will be used in this work to evaluate the quality of our estimation procedures.
\begin{defi}
The bottleneck distance between two persistence diagrams $D_{1}$ and $D_{2}$ is,
$$d_{b}\left(D_{1},D_{2}\right)=\underset{\gamma\in\Gamma}{\inf}\underset{p\in D_{1}\cup  \Delta }{\sup}||p-\gamma(p)||_{\infty}$$
    with $\Gamma$ the set of all bijections between $D_{1}\cup \Delta $ and $D_{2}\cup \Delta$, where $\Delta=\{(x,x), x\in\mathbb{R}\}$.
\end{defi}
Another notion that will be the key to proving our upper bounds is the notion of interleaving between persistence modules. We use in particular the fact that if two modules are $\varepsilon-$interleaved, then the bottleneck distance between their diagrams is upper bounded by $\varepsilon$ in bottleneck distance.
\begin{defi}
\label{def: interleav}
Two persistence modules $\mathbb{V}=\left(\mathbb{V}_{\lambda}\right)_{\lambda\in I\subset \mathbb{R}}$ and $\mathbb{W}=\left(\mathbb{W}_{\lambda}\right)_{\lambda\in I\subset \mathbb{R}}$ are said to be $\varepsilon$-interleaved if there exist two families of linear maps, which we will refer to as (persistence module) morphisms: $$\phi=\left(\phi_{\lambda}:\mathbb{V}_{\lambda}\rightarrow\mathbb{W}_{\lambda+\varepsilon}\right)_{\lambda\in I\subset \mathbb{R}}\text{ and } \psi=\left(\psi_{\lambda}:\mathbb{W}_{\lambda}\rightarrow\mathbb{V}_{\lambda+\varepsilon}\right)_{\lambda\in I\subset \mathbb{R}}$$
such that, for all $\lambda<\lambda'$, the following diagrams commutes,
\begin{center}
\begin{tikzcd}
	{\mathbb{V}_{\lambda }} && {\mathbb{V}_{\lambda '}} & {\mathbb{W}_{\lambda }} && {\mathbb{W}_{\lambda '}} \\
	{\mathbb{W}_{\lambda +\varepsilon }} && {\mathbb{W}_{\lambda '+\varepsilon }} & {\mathbb{V}_{\lambda +\varepsilon }} && {\mathbb{V}_{\lambda '+\varepsilon }} \\
	{\mathbb{V}_{\lambda }} && {\mathbb{V}_{\lambda+2\varepsilon }} & {\mathbb{W}_{\lambda}} && {\mathbb{W}_{\lambda+2\varepsilon }} \\
	& {\mathbb{W}_{\lambda +\varepsilon }} &&& {\mathbb{V}_{\lambda +\varepsilon }}
	\arrow["{\phi _{\lambda }}"', from=1-1, to=2-1]
	\arrow["{\phi _{\lambda '}}", from=1-3, to=2-3]
	\arrow["{w_{\lambda +\varepsilon }^{\lambda ' +\varepsilon }}"', from=2-1, to=2-3]
	\arrow["{w_{\lambda }^{\lambda '}}", from=1-4, to=1-6]
	\arrow["{\psi_{\lambda }}"', from=1-4, to=2-4]
	\arrow["{v_{\lambda +\varepsilon }^{\lambda ' +\varepsilon }}"', from=2-4, to=2-6]
	\arrow["{\psi_{\lambda '}}", from=1-6, to=2-6]
	\arrow["{v_{\lambda }^{\lambda +2\varepsilon}}", from=3-1, to=3-3]
	\arrow["{\phi _{\lambda }}"', from=3-1, to=4-2]
	\arrow["{\psi_{\lambda+\varepsilon}}"', from=4-2, to=3-3]
	\arrow["{\psi_{\lambda }}"', from=3-4, to=4-5]
	\arrow["{v_{\lambda }^{\lambda '}}", from=1-1, to=1-3]
	\arrow["{w_{\lambda }^{\lambda +2\varepsilon}}", from=3-4, to=3-6]
	\arrow["{\phi _{\lambda+\varepsilon}}"', from=4-5, to=3-6]
\end{tikzcd}
\end{center}
\end{defi}
\begin{figure}
    \centering
    \begin{tikzpicture}[x=0.75pt,y=0.75pt,yscale=-1,xscale=1]

\draw    (191.4,252.6) -- (387.4,56.6) ;
\draw  [draw opacity=0][fill={rgb, 255:red, 74; green, 144; blue, 226 }  ,fill opacity=1 ] (206,210) .. controls (206,207.79) and (207.79,206) .. (210,206) .. controls (212.21,206) and (214,207.79) .. (214,210) .. controls (214,212.21) and (212.21,214) .. (210,214) .. controls (207.79,214) and (206,212.21) .. (206,210) -- cycle ;
\draw  [draw opacity=0][fill={rgb, 255:red, 74; green, 144; blue, 226 }  ,fill opacity=1 ] (216,90) .. controls (216,87.79) and (217.79,86) .. (220,86) .. controls (222.21,86) and (224,87.79) .. (224,90) .. controls (224,92.21) and (222.21,94) .. (220,94) .. controls (217.79,94) and (216,92.21) .. (216,90) -- cycle ;
\draw  [draw opacity=0][fill={rgb, 255:red, 74; green, 144; blue, 226 }  ,fill opacity=1 ] (294,60) .. controls (294,57.79) and (295.79,56) .. (298,56) .. controls (300.21,56) and (302,57.79) .. (302,60) .. controls (302,62.21) and (300.21,64) .. (298,64) .. controls (295.79,64) and (294,62.21) .. (294,60) -- cycle ;
\draw  [draw opacity=0][fill={rgb, 255:red, 74; green, 144; blue, 226 }  ,fill opacity=1 ] (186,38.67) .. controls (186,36.46) and (187.79,34.67) .. (190,34.67) .. controls (192.21,34.67) and (194,36.46) .. (194,38.67) .. controls (194,40.88) and (192.21,42.67) .. (190,42.67) .. controls (187.79,42.67) and (186,40.88) .. (186,38.67) -- cycle ;
\draw  [draw opacity=0][fill={rgb, 255:red, 74; green, 144; blue, 226 }  ,fill opacity=1 ] (266,100.67) .. controls (266,98.46) and (267.79,96.67) .. (270,96.67) .. controls (272.21,96.67) and (274,98.46) .. (274,100.67) .. controls (274,102.88) and (272.21,104.67) .. (270,104.67) .. controls (267.79,104.67) and (266,102.88) .. (266,100.67) -- cycle ;
\draw  [draw opacity=0][fill={rgb, 255:red, 208; green, 2; blue, 27 }  ,fill opacity=1 ] (216,39.33) .. controls (216,37.12) and (217.79,35.33) .. (220,35.33) .. controls (222.21,35.33) and (224,37.12) .. (224,39.33) .. controls (224,41.54) and (222.21,43.33) .. (220,43.33) .. controls (217.79,43.33) and (216,41.54) .. (216,39.33) -- cycle ;
\draw  [draw opacity=0][fill={rgb, 255:red, 208; green, 2; blue, 27 }  ,fill opacity=1 ] (223.33,104.67) .. controls (223.33,102.46) and (225.12,100.67) .. (227.33,100.67) .. controls (229.54,100.67) and (231.33,102.46) .. (231.33,104.67) .. controls (231.33,106.88) and (229.54,108.67) .. (227.33,108.67) .. controls (225.12,108.67) and (223.33,106.88) .. (223.33,104.67) -- cycle ;
\draw  [draw opacity=0][fill={rgb, 255:red, 208; green, 2; blue, 27 }  ,fill opacity=1 ] (268,88) .. controls (268,85.79) and (269.79,84) .. (272,84) .. controls (274.21,84) and (276,85.79) .. (276,88) .. controls (276,90.21) and (274.21,92) .. (272,92) .. controls (269.79,92) and (268,90.21) .. (268,88) -- cycle ;
\draw  [draw opacity=0][fill={rgb, 255:red, 208; green, 2; blue, 27 }  ,fill opacity=1 ] (280,60) .. controls (280,57.79) and (281.79,56) .. (284,56) .. controls (286.21,56) and (288,57.79) .. (288,60) .. controls (288,62.21) and (286.21,64) .. (284,64) .. controls (281.79,64) and (280,62.21) .. (280,60) -- cycle ;
\draw  [draw opacity=0][fill={rgb, 255:red, 74; green, 144; blue, 226 }  ,fill opacity=1 ] (230,149.33) .. controls (230,147.12) and (231.79,145.33) .. (234,145.33) .. controls (236.21,145.33) and (238,147.12) .. (238,149.33) .. controls (238,151.54) and (236.21,153.33) .. (234,153.33) .. controls (231.79,153.33) and (230,151.54) .. (230,149.33) -- cycle ;
\draw    (183.87,148.93) -- (228,149.32) ;
\draw [shift={(230,149.33)}, rotate = 180.5] [color={rgb, 255:red, 0; green, 0; blue, 0 }  ][line width=0.75]    (10.93,-3.29) .. controls (6.95,-1.4) and (3.31,-0.3) .. (0,0) .. controls (3.31,0.3) and (6.95,1.4) .. (10.93,3.29)   ;
\draw  [draw opacity=0][fill={rgb, 255:red, 208; green, 2; blue, 27 }  ,fill opacity=1 ] (241.33,139.33) .. controls (241.33,137.12) and (243.12,135.33) .. (245.33,135.33) .. controls (247.54,135.33) and (249.33,137.12) .. (249.33,139.33) .. controls (249.33,141.54) and (247.54,143.33) .. (245.33,143.33) .. controls (243.12,143.33) and (241.33,141.54) .. (241.33,139.33) -- cycle ;
\draw  [draw opacity=0][fill={rgb, 255:red, 208; green, 2; blue, 27 }  ,fill opacity=1 ] (242,159.33) .. controls (242,157.12) and (243.79,155.33) .. (246,155.33) .. controls (248.21,155.33) and (250,157.12) .. (250,159.33) .. controls (250,161.54) and (248.21,163.33) .. (246,163.33) .. controls (243.79,163.33) and (242,161.54) .. (242,159.33) -- cycle ;
\draw  [dash pattern={on 0.84pt off 2.51pt}]  (190,38.67) -- (220,39.33) ;
\draw  [dash pattern={on 0.84pt off 2.51pt}]  (283.67,60) -- (298,60) ;
\draw  [dash pattern={on 0.84pt off 2.51pt}]  (272,88) -- (270,100.67) ;
\draw  [dash pattern={on 0.84pt off 2.51pt}]  (220,90) -- (227.33,104.67) ;
\draw  [dash pattern={on 0.84pt off 2.51pt}]  (234,149.33) -- (246,159.33) ;
\draw  [dash pattern={on 0.84pt off 2.51pt}]  (234,149.33) -- (245.33,139.33) ;
\draw  [dash pattern={on 0.84pt off 2.51pt}]  (210,210) -- (221.87,221.6) ;
\draw  [draw opacity=0][fill={rgb, 255:red, 208; green, 2; blue, 27 }  ,fill opacity=1 ] (341.33,198.67) .. controls (341.33,196.46) and (343.12,194.67) .. (345.33,194.67) .. controls (347.54,194.67) and (349.33,196.46) .. (349.33,198.67) .. controls (349.33,200.88) and (347.54,202.67) .. (345.33,202.67) .. controls (343.12,202.67) and (341.33,200.88) .. (341.33,198.67) -- cycle ;
\draw  [draw opacity=0][fill={rgb, 255:red, 74; green, 144; blue, 226 }  ,fill opacity=1 ] (341,224) .. controls (341,221.79) and (342.79,220) .. (345,220) .. controls (347.21,220) and (349,221.79) .. (349,224) .. controls (349,226.21) and (347.21,228) .. (345,228) .. controls (342.79,228) and (341,226.21) .. (341,224) -- cycle ;

\draw (106.67,140) node [anchor=north west][inner sep=0.75pt]   [align=left] {{\small multiplicity 2}};
\draw (359,193.4) node [anchor=north west][inner sep=0.75pt]    {$D_{1}$};
\draw (358,218.4) node [anchor=north west][inner sep=0.75pt]    {$D_{2}$};

\end{tikzpicture}

    \caption{Optimal matching for the bottleneck distance between $D_{1}$ and $D_{2}$.}
\end{figure}
\begin{customthm}{(algebraic stability \citep{Chazal2009})}
Let $\mathbb{V}$ and $\mathbb{W}$ be two $q-$tame persistence modules (see definition \ref{def: q-tame} in Appendix \ref{q-tame appendix}). If $\mathbb{V}$ and $\mathbb{W}$ are $\varepsilon-$interleaved then,
$$d_{b}\left(\operatorname{dgm}(\mathbb{V}),\operatorname{dgm}(\mathbb{W})\right)\leq \varepsilon$$
\end{customthm}
Note that in the context of sublevel set persistence, a direct consequence of this theorem is the stability of persistence diagrams with respect to the sup-norm distance between functions. More precisely, if two $q$-tame real-valued functions $f$ and $g$ satisfy $\|f - g\|_\infty \leq \varepsilon$, then their sublevel sets filtrations satisfy
\[
\mathcal{F}_{\lambda - \varepsilon} \subset \mathcal{G}_{\lambda} \subset \mathcal{F}_{\lambda + \varepsilon}, \quad \text{for all } \lambda \in \mathbb{R}.
\]
The linear maps induced in homology by these inclusions define an $\varepsilon$-interleaving. By the algebraic stability theorem, this implies:
\[
d_{b}(\operatorname{dgm}(f), \operatorname{dgm}(g)) \leq \varepsilon.
\]
\section{Upper bounds}
This section is devoted to the proof of Theorem \ref{estimation borne sup}. First, we introduce some notations. For a point $x\in [0,1]^{d}$ and $b>0$, we denote the closed Euclidean ball centered at $x$ of radius $b$:
$$B_2(x,b)=\{y\in[0,1]^{d}, ||x-y||_2\leq b\}$$
Similarly, for a set $A\subset \mathbb{R}^{d}$ and $b\geq 0$, we denote,
$$A^{b}=B_{2}(A,b)=\left\{x\in [0,1]^{d} \text{ s.t. } d_{2}\left(\{x\},A\right)\leq b\right\}$$ 
and 
$$A^{-b}=\left(\left(A^{c}\right)^{b}\right)^{c}.$$
We also define, for all $h>0$:
$$N_{h}=\frac{\max\limits_{H\in C_{h}}\left| \frac{\sigma}{nh^{d}}\sum\limits_{x_i\in H}\varepsilon_{i}\right|}{\sqrt{2\sigma^{2}\frac{\log\left(1/h^{d}\right)}{nh^{d}}}}.$$
\textbf{Proof outline.} The strategy is to construct an interleaving between the estimated and true persistence modules and then apply the algebraic stability theorem \citep{Chazal2009}. To do so, we need to construct two families of linear maps $\phi$ and $\psi$ (i.e., module morphisms):
$$\mathbb{V}_{f,s}\overset{\phi}{\longrightarrow}\widehat{\mathbb{V}}_{f,s}\overset{\psi}{\longrightarrow}\mathbb{V}_{f,s}$$
satisfying Definition \ref{def: interleav}. The outline of our proof is as follows:
\begin{itemize}
    \item We show in Lemma \ref{interleavish 3} that for some $\beta_1\geq 0$, depending on $N_h$, $\widehat{\mathcal{F}}_\lambda\subset \mathcal{F}^{\sqrt{d}h}_{\lambda+\beta_1h^{\alpha}}$, which induces a linear map from the homology group of $\widehat{\mathcal{F}}_\lambda$ to the homology group of $\mathcal{F}^{\sqrt{d}h}_{\lambda+\beta_1h^{\alpha}}$.
    \item We identify in Lemma \ref{lemmaFiltEquiv} two families of sets $(\mathcal{K}_{\lambda})_{\lambda\in\mathbb{R}}$ and $(\mathcal{G}_{\lambda})_{\lambda\in\mathbb{R}}$ such that for some $\beta_2,\beta_3\geq 0$, depending on $N_h$, $\mathcal{F}^{\sqrt{d}h}_{\lambda+\beta_1 h^{\alpha}}\subset \mathcal{K}_{\lambda+\beta_2 h^{\alpha}}$, $\mathcal{K}_{\lambda+\beta_2 h^{\alpha}}$ (deformation) retracts onto $\mathcal{G}_{\lambda+\beta_2 h^{\alpha}}$ and $\mathcal{G}_{\lambda+\beta_2 h^{\alpha}}\subset \mathcal{F}_{\lambda+\beta_3 h^{\alpha}}$ which induces a linear map from the homology group of $\mathcal{F}^{\sqrt{d}h}_{\lambda+\beta_1 h^{\alpha}}$ to the homology group of $\mathcal{F}_{\lambda+\beta_3 h^{\alpha}}$.
    \item The composition of the two linear maps obtained previously induces a linear map from the homology group of $\widehat{\mathcal{F}}_\lambda$ to the homology group of $\mathcal{F}_{\lambda+\beta_3 h^{\alpha}}$. This is (essentially) our $\phi$.
    \item We proceed similarly to construct $\psi$, identifying in Lemma \ref{lemmaFiltEquiv2} two families $(\mathcal{N}_{\lambda})_{\lambda\in\mathbb{R}}$ and $(\mathcal{M}_{\lambda})_{\lambda\in\mathbb{R}}$ such that for some $\beta_4,\beta_5\geq 0$, depending on $N_h$, $\mathcal{F}_{\lambda}\subset \mathcal{N}_{\lambda+\beta_4 h^{\alpha}}$, $\mathcal{N}_{\lambda+\beta_4 h^{\alpha}}$ (deformation) retracts onto $\mathcal{M}_{\lambda+\beta_4 h^{\alpha}}$ and $\mathcal{M}_{\lambda+\beta_4 h^{\alpha}}\subset \mathcal{F}^{-\sqrt{d}h}_{\lambda+\beta_5 h^{\alpha}}$. Moreover, in Lemma \ref{interleavish 3} we show that for some $\beta_6\geq 0$, depending on $N_h$, $\mathcal{F}^{-\sqrt{d}h}_{\lambda+\beta_5 h^{\alpha}}\subset \widehat{\mathcal{F}}_{\lambda+\beta_6h^{\alpha}}$.
    \item We prove in Lemma \ref{lemma-histo 2} that the supports of these retractions, when applied to $x\in \widehat{\mathcal{F}}_{\lambda}$, remain within $\widehat{\mathcal{F}}_{\lambda+\beta_7h^{\alpha}}$, for some $\beta_{7}\geq 0$. Then, we use this additional property to show that the $\phi$ and $\psi$ we construct satisfy Definition \ref{def: interleav} and define an interleaving that depends on $N_h$.
    \item We conclude using a standard concentration argument on $N_h$ (Lemma \ref{lemma noise 2}).
\end{itemize}
Following this outline, we formally state Lemmas \ref{interleavish 3}-\ref{lemma noise 2}, which form the fundamental building blocks of our proof.
\begin{lmm}
\label{interleavish 3}
Let $f:[0,1]^{d}\rightarrow \mathbb{R}$. For all $\lambda\in\mathbb{R}$ and $h>0$ satisfying (\ref{calibration}),
$$\mathcal{F}_{\lambda-\sqrt{2}\sigma N_{h}h^{\alpha}}^{-\sqrt{d}h}\subset \widehat{\mathcal{F}}_{\lambda} \subset \mathcal{F}_{\lambda+\sqrt{2}\sigma N_{h}h^{\alpha}}^{\sqrt{d}h}.$$
\end{lmm}
The proof of Lemma \ref{interleavish 3} is provided in Appendix \ref{Proof interleavish}. This double inclusion induces a morphism from $(H_{s}(\mathcal{F}_{\lambda}^{-\sqrt{d}h}))_{\lambda\in\mathbb{R}}$ to $\widehat{\mathbb{V}}_{f,s}$ and a morphism from $\widehat{\mathbb{V}}_{f,s}$ to the module $(H_{s}(\mathcal{F}_{\lambda}^{\sqrt{d}h}))_{\lambda\in\mathbb{R}}$.\\\\
Now, what we need to construct the desired interleaving is, for arbitrary $h>0$:
\begin{itemize}
    \item a morphism from $\mathbb{V}_{f,s}$ to $(H_{s}(\mathcal{F}_{\lambda}^{-h}))_{\lambda\in\mathbb{R}}$
    \item a morphism from $(H_{s}(\mathcal{F}_{\lambda}^{h}))_{\lambda\in\mathbb{R}}$ to $\mathbb{V}_{f,s}$
\end{itemize}
We begin with the construction of the morphism from $(H_s(\mathcal{F}_{\lambda}^{h}))_{\lambda\in\mathbb{R}}$ to $\mathbb{V}_{f,s}$. The main difficulty arises near the boundaries of the regular pieces. When we thicken $\mathcal{F}_{\lambda}$ by $h$, the resulting set $\mathcal{F}_{\lambda}^{h}$ may extend into regions where the signal is significantly larger than $\lambda$. This prevents us from establishing a straightforward inclusion of the form:  
\[
\mathcal{F}^{h}_{\lambda} \subset \mathcal{F}_{\lambda+\varepsilon}
\]  
for a reasonable $\varepsilon$. In particular, this is why standard techniques based on sup-norm stability cannot be directly applied in this setting.  

To formalize this issue, we define the set of problematic points as  
\[
S_{\lambda,h}=\left(\left(\bigcup_{i=1}^{l}\partial M_{i}\cap ]0,1[^{d}\right)^{h}\setminus \mathcal{F}_{\lambda+Lh^{\alpha}}\right)\cap \mathcal{F}_{\lambda}^{h}.
\]  
Intuitively, $S_{\lambda,h}$ consists of the regions where $\mathcal{F}_{\lambda}^{h}$ extends beyond $\mathcal{F}_{\lambda+Lh^{\alpha}}$, preventing a controlled inclusion. Our goal is to smoothly "push" these points into $\mathcal{F}_{\lambda+\varepsilon}$ for some $\varepsilon \asymp h^{\alpha}$.
To achieve this, we will construct a deformation retraction using Theorem 4.8 of \cite{Fed59}. Under Assumption \textbf{A3}, this theorem guarantees that for every \( x \in [0,1]^d \) at a (Euclidean) distance strictly smaller than \( R \) from \( ]0,1[^d \cap \bigcup_{i=1}^{l} \partial M_i \), there exists a unique closest point in \( ]0,1[^d \cap \bigcup_{i=1}^{l} \partial M_i \), denoted by \( \xi(x) \), i.e., there exists a well-defined projection map. Moreover, the function \( \xi \) is continuous.

Using Assumptions \textbf{A1} and \textbf{A2}, we can show (see the proof of Lemma \ref{lemmaFiltEquiv}) that for all \( x \in S_{\lambda,h} \), the projection \( \xi(x) \) belongs to \( \mathcal{F}_{\lambda+\varepsilon} \) for some \( \varepsilon \asymp h^{\alpha} \). The idea is then to map each point \( x \in S_{\lambda,h} \) to \( \xi(x) \), moving along the line segment \( [x, \xi(x)] \). However, turning this idea into a proper deformation retraction requires several technical adjustments.

The first issue is that the segments \( [x, \xi(x)] \) for \( x \in S_{\lambda,h} \) may not be  contained entirely in \( \mathcal{F}_{\lambda}^{h} \). To resolve this, we consider a slightly larger set, adding the segments \( [x, \xi(x)] \) for all \( x \in S_{\lambda,h} \):  
\[
\mathcal{K}_{\lambda,h} := \mathcal{F}_{\lambda}^{h} \cup \left( \bigcup\limits_{x\in S_{\lambda,h}} [x, \xi(x)] \right) \subset \mathcal{F}_{\lambda}^{2h}.
\]   
\begin{figure}[H]
    \centering
\begin{tikzpicture}[x=0.75pt,y=0.75pt,yscale=-1,xscale=1]

\draw  [fill={rgb, 255:red, 74; green, 144; blue, 226 }  ,fill opacity=1 ] (420.09,191.43) -- (522.6,113.34) -- (545.5,143.4) -- (442.99,221.49) -- cycle ;
\draw  [draw opacity=0][fill={rgb, 255:red, 155; green, 155; blue, 155 }  ,fill opacity=1 ] (138.5,66.6) .. controls (158.5,56.6) and (263.2,51.15) .. (243.2,71.15) .. controls (223.2,91.15) and (211,103) .. (231,133) .. controls (251,163) and (161,163) .. (141,133) .. controls (121,103) and (118.5,76.6) .. (138.5,66.6) -- cycle ;
\draw   (86,26) -- (327.5,26) -- (327.5,267.5) -- (86,267.5) -- cycle ;
\draw  [dash pattern={on 4.5pt off 4.5pt}] (138,146.75) .. controls (138,108.78) and (168.78,78) .. (206.75,78) .. controls (244.72,78) and (275.5,108.78) .. (275.5,146.75) .. controls (275.5,184.72) and (244.72,215.5) .. (206.75,215.5) .. controls (168.78,215.5) and (138,184.72) .. (138,146.75) -- cycle ;
\draw  [fill={rgb, 255:red, 208; green, 2; blue, 27 }  ,fill opacity=0.1 ] (179.2,84.15) .. controls (202,71.8) and (251.5,78.6) .. (231.5,98.6) .. controls (211.5,118.6) and (226.5,125.6) .. (246.5,155.6) .. controls (266.5,185.6) and (116.2,220.95) .. (96.2,190.95) .. controls (76.2,160.95) and (118.9,169.36) .. (121.3,131.16) .. controls (123.7,92.95) and (156.4,96.5) .. (179.2,84.15) -- cycle ;
\draw    (128.2,109.15) .. controls (123.2,96.8) and (123.2,81.65) .. (133.7,70.15) .. controls (144.2,58.65) and (248.7,52.65) .. (245.2,66.15) .. controls (241.7,79.65) and (232.7,76.65) .. (231.2,83.65) ;
\draw  [fill={rgb, 255:red, 155; green, 155; blue, 155 }  ,fill opacity=1 ] (357,267.5) -- (357,35.44) .. controls (421.37,17.35) and (594.92,144.89) .. (533.5,147.6) .. controls (469.89,150.41) and (411.52,225.85) .. (402.62,267.5) -- (357,267.5) -- cycle ;
\draw  [dash pattern={on 0.84pt off 2.51pt}] (206.5,73.6) .. controls (206.5,59.79) and (217.69,48.6) .. (231.5,48.6) .. controls (245.31,48.6) and (256.5,59.79) .. (256.5,73.6) .. controls (256.5,87.41) and (245.31,98.6) .. (231.5,98.6) .. controls (217.69,98.6) and (206.5,87.41) .. (206.5,73.6) -- cycle ;
\draw  [dash pattern={on 0.84pt off 2.51pt}]  (231.5,48.6) -- (357,26) ;
\draw  [dash pattern={on 0.84pt off 2.51pt}]  (231.5,98.6) -- (357,267.5) ;
\draw  [fill={rgb, 255:red, 208; green, 2; blue, 27 }  ,fill opacity=0.13 ] (357,267.5) -- (357,152.87) .. controls (383.45,147.26) and (467.13,230.33) .. (447.57,249.63) .. controls (441.4,255.71) and (435.42,261.52) .. (430.66,267.5) -- (357,267.5) -- cycle ;
\draw   (357,26) -- (598.5,26) -- (598.5,267.5) -- (357,267.5) -- cycle ;
\draw  [draw opacity=0][dash pattern={on 4.5pt off 4.5pt}] (360.66,150.4) .. controls (381.1,158.01) and (401.06,170.77) .. (418.31,188.39) .. controls (440.74,211.32) and (454.85,238.65) .. (460.11,265.49) -- (344.04,261.07) -- cycle ; \draw  [dash pattern={on 4.5pt off 4.5pt}] (360.66,150.4) .. controls (381.1,158.01) and (401.06,170.77) .. (418.31,188.39) .. controls (440.74,211.32) and (454.85,238.65) .. (460.11,265.49) ;  
\draw  [fill={rgb, 255:red, 74; green, 74; blue, 74 }  ,fill opacity=1 ] (542.55,143.4) .. controls (542.55,141.77) and (543.87,140.45) .. (545.5,140.45) .. controls (547.13,140.45) and (548.45,141.77) .. (548.45,143.4) .. controls (548.45,145.03) and (547.13,146.35) .. (545.5,146.35) .. controls (543.87,146.35) and (542.55,145.03) .. (542.55,143.4) -- cycle ;
\draw  [fill={rgb, 255:red, 74; green, 74; blue, 74 }  ,fill opacity=1 ] (440.04,221.49) .. controls (440.04,219.86) and (441.37,218.54) .. (442.99,218.54) .. controls (444.62,218.54) and (445.94,219.86) .. (445.94,221.49) .. controls (445.94,223.12) and (444.62,224.44) .. (442.99,224.44) .. controls (441.37,224.44) and (440.04,223.12) .. (440.04,221.49) -- cycle ;

\draw (212,188.4) node [anchor=north west][inner sep=0.75pt]    {$M_{1}$};
\draw (99,233.4) node [anchor=north west][inner sep=0.75pt]    {$M_{2}$};
\draw (181,110.4) node [anchor=north west][inner sep=0.75pt]    {$\mathcal{F}_{\lambda }^{h}$};
\draw (98,173.4) node [anchor=north west][inner sep=0.75pt]  [color={rgb, 255:red, 208; green, 2; blue, 27 }  ,opacity=1 ]  {$\mathcal{F}_{\lambda +Lh^{\alpha }}$};
\draw (138,66.4) node [anchor=north west][inner sep=0.75pt]  [color={rgb, 255:red, 0; green, 0; blue, 0 }  ,opacity=1 ]  {$S_{\lambda ,h}$};
\draw (507.33,185.4) node [anchor=north west][inner sep=0.75pt]  [color={rgb, 255:red, 74; green, 144; blue, 226 }  ,opacity=1 ]  {$\mathcal{K}_{\lambda ,h}$};
\draw (554,133.4) node [anchor=north west][inner sep=0.75pt]    {$x$};
\draw (453,217.4) node [anchor=north west][inner sep=0.75pt]    {$\xi ( x)$};

\end{tikzpicture}

\caption{Illustration of the sets $\mathcal{F}_{\lambda}^{h}$, $\mathcal{F}_{\lambda+Lh^{\alpha}}$, $S_{\lambda,h}$, and $\mathcal{K}_{\lambda,h}$. The region $\mathcal{F}_{\lambda+Lh^{\alpha}}$ is shown in red, while $\mathcal{F}_{\lambda}^{h}$ is depicted in grey. The part of $\mathcal{F}_{\lambda}^{h}$ not included in $\mathcal{F}_{\lambda+Lh^{\alpha}}$ corresponds to $S_{\lambda,h}$. The blue region represents the union of segments added to $\mathcal{F}_{\lambda}^{h}$ to form $\mathcal{K}_{\lambda,h}$.}
    \label{fig:enter-label}
\end{figure}
Additionally, to ensure the continuity of the deformation retraction, we must avoid moving excessively points of \( \bigcup_{x\in S_{\lambda,h}} [x,\xi(x)] \cap M_{i} \) that are close to \( \mathcal{F}_{\lambda+Lh^{\alpha}} \cap M_{i} \) for all \( i \in \{1, \dots, l\} \). To achieve this, we introduce the condition (\textbf{C1}): there exists some \( i \in \{1, \dots, l\} \) such that  
\begin{equation}
\label{cond 1}
x \in \bigcup\limits_{x\in S_{\lambda,h}} [x,\xi(x)] \cap M_{i}
\end{equation}  
and  
\begin{equation}
\label{cond 2}
d_{2} \left(\xi(x),M_{i} \cap \mathcal{F}_{\lambda+Lh^{\alpha}}\right) \geq 2h - ||x - \xi(x)||_{2}.
\end{equation}  

We then define the function \( F_{\lambda,h}: \mathcal{K}_{\lambda,h} \times [0,1] \to \mathcal{K}_{\lambda,h} \), which will serve as our deformation retraction, by:  
\[
F_{\lambda,h}(x) = 
\begin{cases} 
      (1 - t)x + t \left( \xi(x) + \left( 2h - d_{2} \left(\xi(x),M_{i} \cap \mathcal{F}_{\lambda+Lh^{\alpha}}\right) \right)_{+} \frac{x - \xi(x)}{||x - \xi(x)||_{2}} \right), & \text{ under (\textbf{C1})},  \\
      x, & \text{ otherwise.} 
\end{cases}
\]

where \((.)_+ = \max(., 0)\) denotes the positive part. For  \( x\in \bigcup_{z\in S_{\lambda,h}} [z,\xi(z)] \), the function \( F_{\lambda,h} \) pushes \( x \) along the segment  
\[
\left[x, \xi(x) + \left( 2h - d_{2} \left(\xi(x),M_{i} \cap \mathcal{F}_{\lambda+Lh^{\alpha}}\right) \right)_{+} \frac{x - \xi(x)}{||x - \xi(x)||_{2}}\right] \subset [x, \xi(x)].
\]
From a geometric point of view, \( F_{\lambda,h}(x) \) pushes in the direction of the normalized vector \( -\frac{x - \xi(x)}{\|x - \xi(x)\|_2} \), which corresponds to the opposite of the gradient of the distance function to the set \( \bigcup_{i=1}^{l} \partial M_i \cap (0,1)^d \) at the point \( x \). This perspective links naturally to existing results that use the (generalized) gradient of the distance function to construct deformation retracts \citep[see for instance][Theorem 12]{kim2020homotopy}. Moreover, as \( \bigcup_{i=1}^{l} \partial M_i \cap (0,1)^d \) is an \( C^{1,1} \) hypersurface, it is worth noting that the vector \( -\frac{x - \xi(x)}{\|x - \xi(x)\|_2} \) generates the normal cone of the hypersurface at \( \xi(x) \) \citep[see][Theorem 4.8.]{Fed59}.

Note that the term \( \left( 2h - d_{2}\left( \xi(x), M_{i} \cap \mathcal{F}_{\lambda+Lh^{\alpha}} \right) \right)_{+} \) and the condition (\textbf{C1}) will be instrumental in establishing the continuity of \( F_{\lambda,h} \), as they ensure that, for any point \( x \in \bigcup_{x\in S_{\lambda,h}} [x,\xi(x)] \cap M_{i} \), the closer \( x \) is to \( \mathcal{F}_{\lambda+Lh^{\alpha}} \cap M_{i} \), the less it will be displaced by the deformation \( F_{\lambda,h} \). 

Finally, we define  
\[
\mathcal{G}_{\lambda,h} = \operatorname{Im} \left( x \longmapsto F_{\lambda,h}(x,1) \right),
\]  
which will serve as our deformation retract. With this, we have all the necessary formalism to state Lemma \ref{lemmaFiltEquiv}.
\begin{figure}[H]
    \centering
\begin{tikzpicture}[x=0.75pt,y=0.75pt,yscale=-1,xscale=1]

\draw  [fill={rgb, 255:red, 155; green, 155; blue, 155 }  ,fill opacity=1 ] (177.82,132.9) .. controls (177.82,101.14) and (248.56,75.4) .. (335.82,75.4) .. controls (423.08,75.4) and (493.82,101.14) .. (493.82,132.9) .. controls (493.82,164.65) and (423.08,190.4) .. (335.82,190.4) .. controls (248.56,190.4) and (177.82,164.65) .. (177.82,132.9) -- cycle ;
\draw  [fill={rgb, 255:red, 208; green, 2; blue, 27 }  ,fill opacity=0.08 ] (96,17.6) -- (173.12,17.6) .. controls (160.91,40.4) and (163.61,57.28) .. (181.21,96.06) .. controls (199.89,137.23) and (126.17,141.38) .. (96,108.51) -- (96,17.6) -- cycle ;
\draw   (96,17.6) -- (569.5,17.6) -- (569.5,273.6) -- (96,273.6) -- cycle ;
\draw [color={rgb, 255:red, 208; green, 2; blue, 27 }  ,draw opacity=1 ]   (335.82,75.4) -- (335.82,132.9) ;
\draw [shift={(335.82,109.15)}, rotate = 270] [fill={rgb, 255:red, 208; green, 2; blue, 27 }  ,fill opacity=1 ][line width=0.08]  [draw opacity=0] (8.93,-4.29) -- (0,0) -- (8.93,4.29) -- cycle    ;
\draw [color={rgb, 255:red, 208; green, 2; blue, 27 }  ,draw opacity=1 ]   (361.5,76.4) -- (363.2,132.7) ;
\draw [shift={(362.5,109.55)}, rotate = 268.27] [fill={rgb, 255:red, 208; green, 2; blue, 27 }  ,fill opacity=1 ][line width=0.08]  [draw opacity=0] (8.93,-4.29) -- (0,0) -- (8.93,4.29) -- cycle    ;
\draw [color={rgb, 255:red, 208; green, 2; blue, 27 }  ,draw opacity=1 ]   (386.8,78.2) -- (387.2,134.7) ;
\draw [shift={(387.04,111.45)}, rotate = 269.59] [fill={rgb, 255:red, 208; green, 2; blue, 27 }  ,fill opacity=1 ][line width=0.08]  [draw opacity=0] (8.93,-4.29) -- (0,0) -- (8.93,4.29) -- cycle    ;
\draw [color={rgb, 255:red, 208; green, 2; blue, 27 }  ,draw opacity=1 ]   (411.2,82.05) -- (411.2,134.7) ;
\draw [shift={(411.2,113.37)}, rotate = 270] [fill={rgb, 255:red, 208; green, 2; blue, 27 }  ,fill opacity=1 ][line width=0.08]  [draw opacity=0] (8.93,-4.29) -- (0,0) -- (8.93,4.29) -- cycle    ;
\draw [color={rgb, 255:red, 208; green, 2; blue, 27 }  ,draw opacity=1 ]   (437.7,89.05) -- (437.2,133.7) ;
\draw [shift={(437.39,116.37)}, rotate = 270.64] [fill={rgb, 255:red, 208; green, 2; blue, 27 }  ,fill opacity=1 ][line width=0.08]  [draw opacity=0] (8.93,-4.29) -- (0,0) -- (8.93,4.29) -- cycle    ;
\draw [color={rgb, 255:red, 208; green, 2; blue, 27 }  ,draw opacity=1 ]   (461.2,97.55) -- (461.2,135.7) ;
\draw [shift={(461.2,121.62)}, rotate = 270] [fill={rgb, 255:red, 208; green, 2; blue, 27 }  ,fill opacity=1 ][line width=0.08]  [draw opacity=0] (8.93,-4.29) -- (0,0) -- (8.93,4.29) -- cycle    ;
\draw [color={rgb, 255:red, 208; green, 2; blue, 27 }  ,draw opacity=1 ]   (481.2,110.55) -- (481.2,134.7) ;
\draw [shift={(481.2,127.62)}, rotate = 270] [fill={rgb, 255:red, 208; green, 2; blue, 27 }  ,fill opacity=1 ][line width=0.08]  [draw opacity=0] (8.93,-4.29) -- (0,0) -- (8.93,4.29) -- cycle    ;
\draw [color={rgb, 255:red, 208; green, 2; blue, 27 }  ,draw opacity=1 ]   (310.4,76.6) -- (311.2,133.7) ;
\draw [shift={(310.87,110.15)}, rotate = 269.2] [fill={rgb, 255:red, 208; green, 2; blue, 27 }  ,fill opacity=1 ][line width=0.08]  [draw opacity=0] (8.93,-4.29) -- (0,0) -- (8.93,4.29) -- cycle    ;
\draw [color={rgb, 255:red, 208; green, 2; blue, 27 }  ,draw opacity=1 ]   (283.7,79.55) -- (287.2,133.7) ;
\draw [shift={(285.77,111.61)}, rotate = 266.3] [fill={rgb, 255:red, 208; green, 2; blue, 27 }  ,fill opacity=1 ][line width=0.08]  [draw opacity=0] (8.93,-4.29) -- (0,0) -- (8.93,4.29) -- cycle    ;
\draw [color={rgb, 255:red, 208; green, 2; blue, 27 }  ,draw opacity=1 ]   (258.2,83.55) -- (259.2,131.7) ;
\draw [shift={(258.8,112.62)}, rotate = 268.81] [fill={rgb, 255:red, 208; green, 2; blue, 27 }  ,fill opacity=1 ][line width=0.08]  [draw opacity=0] (8.93,-4.29) -- (0,0) -- (8.93,4.29) -- cycle    ;
\draw [color={rgb, 255:red, 208; green, 2; blue, 27 }  ,draw opacity=1 ]   (236.2,89.05) -- (237.2,127.7) ;
\draw [shift={(236.83,113.37)}, rotate = 268.52] [fill={rgb, 255:red, 208; green, 2; blue, 27 }  ,fill opacity=1 ][line width=0.08]  [draw opacity=0] (8.93,-4.29) -- (0,0) -- (8.93,4.29) -- cycle    ;
\draw [color={rgb, 255:red, 208; green, 2; blue, 27 }  ,draw opacity=1 ]   (213.2,97.55) -- (215.2,127.7) ;
\draw [shift={(214.53,117.61)}, rotate = 266.2] [fill={rgb, 255:red, 208; green, 2; blue, 27 }  ,fill opacity=1 ][line width=0.08]  [draw opacity=0] (8.93,-4.29) -- (0,0) -- (8.93,4.29) -- cycle    ;
\draw [color={rgb, 255:red, 208; green, 2; blue, 27 }  ,draw opacity=1 ]   (193.2,108.05) -- (194.2,125.7) ;
\draw [shift={(193.98,121.87)}, rotate = 266.76] [fill={rgb, 255:red, 208; green, 2; blue, 27 }  ,fill opacity=1 ][line width=0.08]  [draw opacity=0] (8.93,-4.29) -- (0,0) -- (8.93,4.29) -- cycle    ;
\draw  [dash pattern={on 4.5pt off 4.5pt}]  (98.82,131.4) -- (572.82,134.4) ;
\draw  [fill={rgb, 255:red, 126; green, 211; blue, 33 }  ,fill opacity=1 ] (180.28,122.73) -- (296.19,132.9) -- (177.82,132.9) .. controls (177.82,129.43) and (178.66,126.03) .. (180.28,122.73) -- cycle ;
\draw  [fill={rgb, 255:red, 208; green, 2; blue, 27 }  ,fill opacity=1 ] (177.6,122.5) .. controls (177.6,121.12) and (178.72,120) .. (180.1,120) .. controls (181.48,120) and (182.6,121.12) .. (182.6,122.5) .. controls (182.6,123.88) and (181.48,125) .. (180.1,125) .. controls (178.72,125) and (177.6,123.88) .. (177.6,122.5) -- cycle ;
\draw  [fill={rgb, 255:red, 208; green, 2; blue, 27 }  ,fill opacity=0.1 ] (335.78,247.4) .. controls (219.15,247.4) and (123.75,200.05) .. (116.5,140.22) -- (116.5,132.9) -- (555.5,132.9) .. controls (555.5,196.13) and (457.13,247.4) .. (335.78,247.4) -- cycle ;

\draw (110,242.4) node [anchor=north west][inner sep=0.75pt]    {$M_{1}$};
\draw (532,29.4) node [anchor=north west][inner sep=0.75pt]    {$M_{2}$};
\draw (332,44.4) node [anchor=north west][inner sep=0.75pt]  [color={rgb, 255:red, 208; green, 2; blue, 27 }  ,opacity=1 ]  {$F_{\lambda ,h}$};

\end{tikzpicture}

    \caption{Illustration of the deformation retraction $F_{\lambda,h}$. The region $\mathcal{F}_{\lambda+Lh^{\alpha}}$ is shown in red, while $\mathcal{F}_{\lambda}^{h}$ is depicted in gray. Points closer to $\mathcal{F}_{\lambda+Lh^{\alpha}}$ experience less displacement under $F_{\lambda,h}$, which ensures its continuity. The green region represents elements of $\mathcal{G}_{\lambda,h}$ that do not belong to $\mathcal{F}_{\lambda+Lh^{\alpha}}$ but will be included in $\mathcal{F}_{\lambda+\varepsilon}$ for some $\varepsilon \asymp h^{\alpha}$, as ensured by Assumptions \textbf{A1} and \textbf{A2} (see proof of Lemma \ref{lemmaFiltEquiv}).}
    \label{fig:enter-label}
\end{figure}
\begin{lmm}
\label{lemmaFiltEquiv}
    For all $0<h<\frac{R}{2}$ and $\lambda\in\mathbb{R}$, $F_{\lambda,h}$ is a deformation retraction of $\mathcal{K}_{\lambda,h}$ onto $\mathcal{G}_{\lambda,h}$. Furthermore, we have $\mathcal{F}_{\lambda}^{h}\subset \mathcal{K}_{\lambda,h}\subset \mathcal{F}_{\lambda}^{2h}$ and $\mathcal{F}_{\lambda}\subset\mathcal{G}_{\lambda,h}\subset \mathcal{F}_{\lambda+L(1+3^{\alpha})h^{\alpha}}$.
\end{lmm}
The proof of Lemma \ref{lemmaFiltEquiv} is provided in Appendix \ref{proof Filtequiv}. Combining the inclusion $\mathcal{F}_{\lambda}^{\sqrt{d}h}\subset \mathcal{K}_{\lambda,\sqrt{d}h}$, the retraction from $\mathcal{K}_{\lambda,\sqrt{d}h}$ to $\mathcal{G}_{\lambda,\sqrt{d}h}$, and the inclusion $\mathcal{G}_{\lambda,\sqrt{d}h}\subset \mathcal{F}_{\lambda+L(1+3^{\alpha})d^{\alpha/2}h^{\alpha}}$, provided by Lemma \ref{lemmaFiltEquiv}, induces a morphism from $(H_{s}(\mathcal{F})_{\lambda}^{\sqrt{d}h}))_{\lambda\in\mathbb{R}}$ to $\mathbb{V}_{f,s}$.\\\\
Now, we construct the morphism from \( \mathbb{V}_{f,s} \) to \( (H_{s}(\mathcal{F}_{\lambda}^{-h}))_{\lambda \in \mathbb{R}} \). The idea is similar to the construction of the previous morphism, with the key difference being that here we want to map points of \( \mathcal{F}_{\lambda} \), that are not in \( \mathcal{F}_{\lambda+\varepsilon}^{-h} \) for reasonable \( \varepsilon \), into \( \mathcal{F}_{\lambda+\varepsilon}^{-h} \). These problematic points are typically contained in:
\[
P_{\lambda,h} = \left( \left( \bigcup_{i=1}^{l} \partial M_i \cap ]0,1[^d \right)^h \setminus \mathcal{F}_{\lambda+2Lh^\alpha}^{-h} \right) \cap \mathcal{F}_\lambda,
\]
which are located near the boundaries of the regular pieces. The idea is to push these points away from the boundaries. To do this, we define the map:  
\[
\gamma_h(x) = 
\begin{cases} 
x + \frac{\left( h - d_2\left(x, \bigcup_{i=1}^{l} \partial M_i \cap ]0,1[^d \right) \right)}{d_2\left(x, \bigcup_{i=1}^{l} \partial M_i \cap ]0,1[^d \right)} \left(x - \xi(x)\right), & \text{if } x \in \left( \bigcup_{i=1}^{l} \partial M_i \cap ]0,1[^d \right)^h \setminus \bigcup_{i=1}^{l} \partial M_i, \\
x, & \text{if } x \notin \left( \bigcup_{i=1}^{l} \partial M_i \cap ]0,1[^d \right)^h.
\end{cases}
\]
The function \( \gamma_h \) pushes points from the \( h \)-neighborhood of the boundaries by following the gradient of the distance function to $\bigcup_{i=1}^{l} \partial M_i \cap ]0,1[^d $, moving them outside of this \( h \)-neighborhood. It should be noted that, for all $x\in  \left( \bigcup_{i=1}^{l} \partial M_i \cap ]0,1[^d \right)^h \setminus \bigcup_{i=1}^{l} \partial M_i$, the vectors $\gamma_{h}(x)-x$ and $\xi(x)-x$ are collinear and point in opposite directions.
Consequently, $\gamma_{h}(x)-x$ also generates the normal cone of $\bigcup_{i=1}^{l} \partial M_i \cap ]0,1[^d $ at $\xi(x)$. Moreover, this function can be continuously extended to \( P_{\lambda,h} \) using Assumption \textbf{A3} (see the proof of Lemma \ref{lemmaFiltEquiv2}). We denote this extension by \( \gamma_{h,\lambda} \). 
\begin{figure}[H]
    \centering
    \begin{tikzpicture}[x=0.75pt,y=0.75pt,yscale=-1,xscale=1]

\draw   (192,17) -- (439,17) -- (439,264) -- (192,264) -- cycle ;
\draw  [draw opacity=0][dash pattern={on 4.5pt off 4.5pt}] (323.33,263.4) .. controls (292.14,238.2) and (271.5,194.36) .. (271.5,144.5) .. controls (271.5,88.18) and (297.83,39.55) .. (335.95,16.77) -- (384.75,144.5) -- cycle ; \draw  [dash pattern={on 4.5pt off 4.5pt}] (323.33,263.4) .. controls (292.14,238.2) and (271.5,194.36) .. (271.5,144.5) .. controls (271.5,88.18) and (297.83,39.55) .. (335.95,16.77) ;  
\draw  [draw opacity=0][dash pattern={on 0.84pt off 2.51pt}] (406.33,263.4) .. controls (375.14,238.2) and (354.5,194.36) .. (354.5,144.5) .. controls (354.5,88.18) and (380.83,39.55) .. (418.95,16.77) -- (467.75,144.5) -- cycle ; \draw  [dash pattern={on 0.84pt off 2.51pt}] (406.33,263.4) .. controls (375.14,238.2) and (354.5,194.36) .. (354.5,144.5) .. controls (354.5,88.18) and (380.83,39.55) .. (418.95,16.77) ;  
\draw [color={rgb, 255:red, 74; green, 144; blue, 226 }  ,draw opacity=1 ]   (193,144.5) -- (439.33,143.83) ;
\draw  [fill={rgb, 255:red, 208; green, 2; blue, 27 }  ,fill opacity=1 ] (267.18,143.5) .. controls (267.18,141.16) and (269.19,139.27) .. (271.67,139.27) .. controls (274.14,139.27) and (276.15,141.16) .. (276.15,143.5) .. controls (276.15,145.84) and (274.14,147.73) .. (271.67,147.73) .. controls (269.19,147.73) and (267.18,145.84) .. (267.18,143.5) -- cycle ;
\draw  [fill={rgb, 255:red, 155; green, 155; blue, 155 }  ,fill opacity=1 ] (295.68,143.5) .. controls (295.68,141.16) and (297.69,139.27) .. (300.17,139.27) .. controls (302.64,139.27) and (304.65,141.16) .. (304.65,143.5) .. controls (304.65,145.84) and (302.64,147.73) .. (300.17,147.73) .. controls (297.69,147.73) and (295.68,145.84) .. (295.68,143.5) -- cycle ;
\draw  [fill={rgb, 255:red, 208; green, 2; blue, 27 }  ,fill opacity=1 ] (351.51,143.83) .. controls (351.51,141.5) and (353.52,139.6) .. (356,139.6) .. controls (358.48,139.6) and (360.49,141.5) .. (360.49,143.83) .. controls (360.49,146.17) and (358.48,148.06) .. (356,148.06) .. controls (353.52,148.06) and (351.51,146.17) .. (351.51,143.83) -- cycle ;
\draw    (274,154.18) -- (353.67,154.49) ;
\draw [shift={(356.67,154.5)}, rotate = 180.22] [fill={rgb, 255:red, 0; green, 0; blue, 0 }  ][line width=0.08]  [draw opacity=0] (7.14,-3.43) -- (0,0) -- (7.14,3.43) -- cycle    ;
\draw [shift={(271,154.17)}, rotate = 0.22] [fill={rgb, 255:red, 0; green, 0; blue, 0 }  ][line width=0.08]  [draw opacity=0] (7.14,-3.43) -- (0,0) -- (7.14,3.43) -- cycle    ;

\draw (304,120.07) node [anchor=north west][inner sep=0.75pt]    {$x$};
\draw (235.67,121.4) node [anchor=north west][inner sep=0.75pt]    {$\xi ( x)$};
\draw (361.33,120.07) node [anchor=north west][inner sep=0.75pt]    {$\gamma _{h}( x)$};
\draw (197.33,235.73) node [anchor=north west][inner sep=0.75pt]    {$M_{1}$};
\draw (337.33,34.4) node [anchor=north west][inner sep=0.75pt]    {$M_{2}$};
\draw (307.67,157.73) node [anchor=north west][inner sep=0.75pt]    {$h$};

\end{tikzpicture}
    \caption{Illustration of $\xi$ and $\gamma_h$. In blue is represented the normal cone of $\bigcup_{i=1}^{l} \partial M_i \cap ]0,1[^d $ at $\xi(x)$.  }
\end{figure}

Our goal now is to use \( \gamma_{h,\lambda} \) to push smoothly points of \( P_{\lambda,h} \), constructing again a deformation retraction. Similarly to what we did in Lemma \ref{lemmaFiltEquiv}, for some \( x \in P_{\lambda,h} \), the line segment \( [x, \gamma_{h,\lambda}(x)] \) may not be contained entirely in \( \mathcal{F}_{\lambda} \). Therefore, we consider the larger set, adding the segments \( [x, \gamma_{h,\lambda}(x)] \) for all \( x \in P_{\lambda,h} \):

\[
\mathcal{N}_{\lambda,h} := \mathcal{F}_{\lambda} \cup \left( \bigcup\limits_{x \in P_{\lambda,h}} [x, \gamma_{h,\lambda}(x)] \right) \subset \mathcal{F}_{\lambda + Lh^\alpha}.
\]
\begin{figure}[h]
    \centering
\begin{tikzpicture}[x=0.75pt,y=0.75pt,yscale=-1,xscale=1]

\draw  [color={rgb, 255:red, 0; green, 0; blue, 0 }  ,draw opacity=1 ][fill={rgb, 255:red, 74; green, 144; blue, 226 }  ,fill opacity=1 ] (496.53,215.57) .. controls (496.82,216.33) and (497.08,217.09) .. (497.33,217.87) -- (416.36,264.13) .. controls (413.99,260.33) and (411.53,256.59) .. (408.97,252.92) .. controls (400.31,237.26) and (390.45,222.32) .. (379.5,208.24) .. controls (421.59,194.33) and (487.3,222.3) .. (496.53,215.57) -- cycle ;
\draw  [dash pattern={on 0.84pt off 2.51pt}]  (495.14,217.09) -- (517.73,203.1) ;
\draw   (42,26) -- (283.5,26) -- (283.5,267.5) -- (42,267.5) -- cycle ;
\draw  [draw opacity=0][fill={rgb, 255:red, 155; green, 155; blue, 155 }  ,fill opacity=1 ] (169.5,44.6) .. controls (194.2,32.41) and (232.53,61.2) .. (242.5,70.6) .. controls (252.47,80) and (227.49,65.65) .. (222.5,76.6) .. controls (217.51,87.55) and (218.31,98.79) .. (204.73,110.1) .. controls (191.16,121.41) and (185.6,194.96) .. (169.54,220.57) .. controls (153.49,246.17) and (51.62,206.42) .. (75.7,164.48) .. controls (99.78,122.54) and (144.8,56.79) .. (169.5,44.6) -- cycle ;
\draw  [fill={rgb, 255:red, 208; green, 2; blue, 27 }  ,fill opacity=0.07 ] (206.5,63.8) .. controls (233.5,79.8) and (232.6,92.61) .. (200.5,142.6) .. controls (168.4,192.59) and (214.55,227.99) .. (198.5,253.6) .. controls (182.45,279.21) and (26.42,203.54) .. (50.5,161.6) .. controls (74.58,119.66) and (82.89,88.29) .. (105.5,72.8) .. controls (128.11,57.31) and (179.5,47.8) .. (206.5,63.8) -- cycle ;
\draw  [dash pattern={on 4.5pt off 4.5pt}] (104.72,124.43) .. controls (104.72,78.13) and (142.72,40.6) .. (189.61,40.6) .. controls (236.49,40.6) and (274.5,78.13) .. (274.5,124.43) .. controls (274.5,170.72) and (236.49,208.25) .. (189.61,208.25) .. controls (142.72,208.25) and (104.72,170.72) .. (104.72,124.43) -- cycle ;
\draw  [dash pattern={on 0.84pt off 2.51pt}] (211.5,77.6) .. controls (211.5,63.79) and (222.69,52.6) .. (236.5,52.6) .. controls (250.31,52.6) and (261.5,63.79) .. (261.5,77.6) .. controls (261.5,91.41) and (250.31,102.6) .. (236.5,102.6) .. controls (222.69,102.6) and (211.5,91.41) .. (211.5,77.6) -- cycle ;
\draw  [dash pattern={on 0.84pt off 2.51pt}]  (236.5,52.6) -- (313,26) ;
\draw  [dash pattern={on 0.84pt off 2.51pt}]  (236.5,102.6) -- (313,267.5) ;
\draw  [color={rgb, 255:red, 0; green, 0; blue, 0 }  ,draw opacity=1 ][fill={rgb, 255:red, 155; green, 155; blue, 155 }  ,fill opacity=1 ] (481.93,193.39) .. controls (543.2,261.86) and (389.75,157.32) .. (359.08,237.11) .. controls (355.23,247.12) and (351.94,257.17) .. (348.97,267.24) -- (314.9,267.24) .. controls (314.59,266.87) and (314.27,266.49) .. (313.95,266.12) -- (313.95,50.37) .. controls (387.16,87.02) and (447.58,155) .. (481.93,193.39) -- cycle ;
\draw   (313,26) -- (554.5,26) -- (554.5,267.5) -- (313,267.5) -- cycle ;
\draw  [draw opacity=0][dash pattern={on 4.5pt off 4.5pt}] (367.74,25.59) .. controls (398.63,41.21) and (437.45,77.14) .. (471.89,124.79) .. controls (510.67,178.46) and (533.57,232.74) .. (534.51,267.07) -- (423.23,159.95) -- cycle ; \draw  [dash pattern={on 4.5pt off 4.5pt}] (367.74,25.59) .. controls (398.63,41.21) and (437.45,77.14) .. (471.89,124.79) .. controls (510.67,178.46) and (533.57,232.74) .. (534.51,267.07) ;  
\draw  [fill={rgb, 255:red, 74; green, 74; blue, 74 }  ,fill opacity=1 ] (491.43,217.87) .. controls (491.43,216.24) and (492.75,214.92) .. (494.38,214.92) .. controls (496.01,214.92) and (497.33,216.24) .. (497.33,217.87) .. controls (497.33,219.5) and (496.01,220.82) .. (494.38,220.82) .. controls (492.75,220.82) and (491.43,219.5) .. (491.43,217.87) -- cycle ;
\draw  [fill={rgb, 255:red, 74; green, 74; blue, 74 }  ,fill opacity=1 ] (413.8,263.24) .. controls (413.8,261.61) and (415.13,260.29) .. (416.75,260.29) .. controls (418.38,260.29) and (419.7,261.61) .. (419.7,263.24) .. controls (419.7,264.87) and (418.38,266.19) .. (416.75,266.19) .. controls (415.13,266.19) and (413.8,264.87) .. (413.8,263.24) -- cycle ;
\draw  [fill={rgb, 255:red, 74; green, 74; blue, 74 }  ,fill opacity=1 ] (514.78,203.1) .. controls (514.78,201.47) and (516.1,200.15) .. (517.73,200.15) .. controls (519.36,200.15) and (520.68,201.47) .. (520.68,203.1) .. controls (520.68,204.73) and (519.36,206.05) .. (517.73,206.05) .. controls (516.1,206.05) and (514.78,204.73) .. (514.78,203.1) -- cycle ;
\draw [color={rgb, 255:red, 155; green, 155; blue, 155 }  ,draw opacity=1 ]   (425.41,264.48) -- (518.89,211.48) ;
\draw [shift={(521.5,210)}, rotate = 150.45] [fill={rgb, 255:red, 155; green, 155; blue, 155 }  ,fill opacity=1 ][line width=0.08]  [draw opacity=0] (8.93,-4.29) -- (0,0) -- (8.93,4.29) -- cycle    ;
\draw [shift={(422.8,265.96)}, rotate = 330.45] [fill={rgb, 255:red, 155; green, 155; blue, 155 }  ,fill opacity=1 ][line width=0.08]  [draw opacity=0] (8.93,-4.29) -- (0,0) -- (8.93,4.29) -- cycle    ;
\draw  [color={rgb, 255:red, 0; green, 0; blue, 0 }  ,draw opacity=1 ] (242.5,70.6) .. controls (252.47,80) and (227.49,65.65) .. (222.5,76.6) .. controls (222.44,76.74) and (222.37,76.88) .. (222.31,77.02) .. controls (219.17,72.3) and (213.87,68.17) .. (206.5,63.8) .. controls (192.86,55.72) and (173,54.15) .. (153.77,56.57) .. controls (159.54,51.04) and (164.88,46.88) .. (169.5,44.6) .. controls (194.2,32.41) and (232.53,61.2) .. (242.5,70.6) -- cycle ;
\draw  [fill={rgb, 255:red, 208; green, 2; blue, 27 }  ,fill opacity=0.13 ] (313.95,267.24) -- (313.95,142.06) .. controls (357.37,174.89) and (392.76,217.77) .. (416.66,267.24) -- (313.95,267.24) -- cycle ;

\draw (209,162.4) node [anchor=north west][inner sep=0.75pt]    {$M_{1}$};
\draw (55,233.4) node [anchor=north west][inner sep=0.75pt]    {$M_{2}$};
\draw (177.36,40.49) node [anchor=north west][inner sep=0.75pt]    {$P_{\lambda ,h}$};
\draw (138,116.4) node [anchor=north west][inner sep=0.75pt]    {$\mathcal{F}_{\lambda }$};
\draw (142,227.4) node [anchor=north west][inner sep=0.75pt]    {$\mathcal{\textcolor[rgb]{0.82,0.01,0.11}{F}}\textcolor[rgb]{0.82,0.01,0.11}{_{\lambda +2Lh^{\alpha }}^{-h}}$};
\draw (516.96,179.68) node [anchor=north west][inner sep=0.75pt]    {$\xi ( x)$};
\draw (476.76,198.68) node [anchor=north west][inner sep=0.75pt]    {$x$};
\draw (356.24,242.88) node [anchor=north west][inner sep=0.75pt]    {$\gamma _{\lambda ,h}( x)$};
\draw (478.38,233.22) node [anchor=north west][inner sep=0.75pt]    {$\textcolor[rgb]{0.61,0.61,0.61}{h}$};
\draw (498,243.4) node [anchor=north west][inner sep=0.75pt]  [color={rgb, 255:red, 74; green, 144; blue, 226 }  ,opacity=1 ]  {$\mathcal{N}_{\lambda ,h}$};

\end{tikzpicture}

    \caption{Illustration of the sets $\mathcal{F}_{\lambda}$, $\mathcal{F}^{-h}_{\lambda+2Lh^{\alpha}}$, $P_{\lambda,h}$, and $\mathcal{N}_{\lambda,h}$. The region $\mathcal{F}_{\lambda+2Lh^{\alpha}}^{-h}$ is shown in red, while $\mathcal{F}_{\lambda}$ is depicted in gray. The part of $\mathcal{F}_{\lambda}$ not included in $\mathcal{F}_{\lambda+2Lh^{\alpha}}^{-h}$ corresponds to $P_{\lambda,h}$. The blue region represents the union of segments added to $\mathcal{F}_{\lambda}$ to form $\mathcal{N}_{\lambda,h}$.}
    \label{fig:enter-label}
\end{figure}

To ensure the continuity of the deformation retraction, we need to control how much we move the points of \( \bigcup_{x \in P_{\lambda,h}} [x, \gamma_{h,\lambda}(x)] \cap M_i \) that are close to \( \mathcal{F}_{\lambda + 2Lh^\alpha} \cap (\overline{M_i})^c \). Thus, we introduce condition (\textbf{C2}): there exists \( i \in \{1, \dots, l\} \) such that
\begin{equation}
\label{cond 3}
x \in \bigcup\limits_{x \in P_{\lambda,h}} [x, \gamma_{h,\lambda}(x)] \cap M_i
\end{equation}
and
\begin{equation}
\label{cond 4}
d_2\left(\gamma_{h,\lambda}(x), (\overline{M_i})^c \cap \mathcal{F}_{\lambda + 2Lh^\alpha}\right) \geq 3h - \|x - \gamma_{h,\lambda}(x)\|_2.
\end{equation}

We then define \( H_{\lambda,h}: \mathcal{N}_{\lambda,h} \times [0,1] \rightarrow \mathcal{N}_{\lambda,h} \), which will serve as our deformation retraction, by $H_{\lambda,h}(x) = $
\[
\begin{cases} 
(1 - t)x + t \left( \gamma_{h,\lambda}(x) + \left( 3h - d_2\left( \gamma_{h,\lambda}(x), (\overline{M_i})^c \cap \mathcal{F}_{\lambda + 2Lh^\alpha} \right) \right)_+ \frac{x - \gamma_{h,\lambda}(x)}{\|x - \gamma_{h,\lambda}(x)\|_2} \right), & \text{if (\textbf{C2})}, \\
x, & \text{else}.
\end{cases}
\]
Finally, we denote \( \mathcal{M}_{\lambda,h} = \operatorname{Im}(x \mapsto H_{\lambda,h}(x,1)) \), which will serve as our deformation retract. We can now state Lemma \ref{lemmaFiltEquiv2}.
\begin{figure}[H]
    \centering
\begin{tikzpicture}[x=0.75pt,y=0.75pt,yscale=-1,xscale=1]

\draw  [fill={rgb, 255:red, 155; green, 155; blue, 155 }  ,fill opacity=1 ] (589.82,117.8) -- (589.82,210.6) -- (261.5,210.6) .. controls (249.79,222.34) and (233.59,229.6) .. (215.7,229.6) .. controls (179.97,229.6) and (151,200.63) .. (151,164.9) .. controls (151,129.17) and (179.97,100.2) .. (215.7,100.2) .. controls (232.87,100.2) and (248.47,106.89) .. (260.05,117.8) -- (589.82,117.8) -- cycle ;
\draw   (116,34) -- (589.5,34) -- (589.5,290) -- (116,290) -- cycle ;
\draw [color={rgb, 255:red, 208; green, 2; blue, 27 }  ,draw opacity=1 ] [dash pattern={on 4.5pt off 4.5pt}]  (115.32,166.73) -- (589.32,169.73) ;
\draw  [dash pattern={on 4.5pt off 4.5pt}]  (115.82,114.8) -- (589.82,117.8) ;
\draw    (600.7,120.85) -- (600.7,166.35) ;
\draw [shift={(600.7,169.35)}, rotate = 270] [fill={rgb, 255:red, 0; green, 0; blue, 0 }  ][line width=0.08]  [draw opacity=0] (8.93,-4.29) -- (0,0) -- (8.93,4.29) -- cycle    ;
\draw [shift={(600.7,117.85)}, rotate = 90] [fill={rgb, 255:red, 0; green, 0; blue, 0 }  ][line width=0.08]  [draw opacity=0] (8.93,-4.29) -- (0,0) -- (8.93,4.29) -- cycle    ;
\draw  [fill={rgb, 255:red, 208; green, 2; blue, 27 }  ,fill opacity=0.11 ] (275.07,238.15) .. controls (257.47,260.48) and (232.44,274.42) .. (204.67,274.42) .. controls (164.92,274.42) and (130.79,245.84) .. (116,205.02) -- (116,114.29) .. controls (130.79,73.47) and (164.92,44.89) .. (204.67,44.89) .. controls (247.82,44.89) and (284.35,78.57) .. (296.7,125.03) .. controls (313.76,153.05) and (345.83,161.07) .. (367.39,169.73) -- (589.32,169.73) -- (589.32,290) -- (432.93,290) .. controls (369.79,278.4) and (314.19,259.53) .. (275.07,238.15) -- cycle ;
\draw [color={rgb, 255:red, 208; green, 2; blue, 27 }  ,draw opacity=1 ]   (575.4,116.6) -- (575.5,170.6) ;
\draw [shift={(575.46,148.6)}, rotate = 269.89] [fill={rgb, 255:red, 208; green, 2; blue, 27 }  ,fill opacity=1 ][line width=0.08]  [draw opacity=0] (8.93,-4.29) -- (0,0) -- (8.93,4.29) -- cycle    ;
\draw [color={rgb, 255:red, 208; green, 2; blue, 27 }  ,draw opacity=1 ]   (544.4,118.6) -- (544.5,169.6) ;
\draw [shift={(544.46,149.1)}, rotate = 269.89] [fill={rgb, 255:red, 208; green, 2; blue, 27 }  ,fill opacity=1 ][line width=0.08]  [draw opacity=0] (8.93,-4.29) -- (0,0) -- (8.93,4.29) -- cycle    ;
\draw [color={rgb, 255:red, 208; green, 2; blue, 27 }  ,draw opacity=1 ]   (514.4,118.6) -- (514.5,169.6) ;
\draw [shift={(514.46,149.1)}, rotate = 269.89] [fill={rgb, 255:red, 208; green, 2; blue, 27 }  ,fill opacity=1 ][line width=0.08]  [draw opacity=0] (8.93,-4.29) -- (0,0) -- (8.93,4.29) -- cycle    ;
\draw [color={rgb, 255:red, 208; green, 2; blue, 27 }  ,draw opacity=1 ]   (481.4,117.6) -- (481.5,170.2) ;
\draw [shift={(481.46,148.9)}, rotate = 269.89] [fill={rgb, 255:red, 208; green, 2; blue, 27 }  ,fill opacity=1 ][line width=0.08]  [draw opacity=0] (8.93,-4.29) -- (0,0) -- (8.93,4.29) -- cycle    ;
\draw [color={rgb, 255:red, 208; green, 2; blue, 27 }  ,draw opacity=1 ]   (449.4,117.6) -- (449.5,170.2) ;
\draw [shift={(449.46,148.9)}, rotate = 269.89] [fill={rgb, 255:red, 208; green, 2; blue, 27 }  ,fill opacity=1 ][line width=0.08]  [draw opacity=0] (8.93,-4.29) -- (0,0) -- (8.93,4.29) -- cycle    ;
\draw [color={rgb, 255:red, 208; green, 2; blue, 27 }  ,draw opacity=1 ]   (417.38,117.78) -- (418.4,164.8) ;
\draw [shift={(418,146.29)}, rotate = 268.76] [fill={rgb, 255:red, 208; green, 2; blue, 27 }  ,fill opacity=1 ][line width=0.08]  [draw opacity=0] (8.93,-4.29) -- (0,0) -- (8.93,4.29) -- cycle    ;
\draw [color={rgb, 255:red, 208; green, 2; blue, 27 }  ,draw opacity=1 ]   (385.5,117.2) -- (386.5,154.2) ;
\draw [shift={(386.14,140.7)}, rotate = 268.45] [fill={rgb, 255:red, 208; green, 2; blue, 27 }  ,fill opacity=1 ][line width=0.08]  [draw opacity=0] (8.93,-4.29) -- (0,0) -- (8.93,4.29) -- cycle    ;
\draw [color={rgb, 255:red, 208; green, 2; blue, 27 }  ,draw opacity=1 ]   (352.31,117.78) -- (352.5,140.2) ;
\draw [shift={(352.45,133.99)}, rotate = 269.51] [fill={rgb, 255:red, 208; green, 2; blue, 27 }  ,fill opacity=1 ][line width=0.08]  [draw opacity=0] (8.93,-4.29) -- (0,0) -- (8.93,4.29) -- cycle    ;
\draw [color={rgb, 255:red, 208; green, 2; blue, 27 }  ,draw opacity=1 ]   (325.88,117.43) -- (326.46,131.94) ;
\draw [shift={(326.37,129.68)}, rotate = 267.7] [fill={rgb, 255:red, 208; green, 2; blue, 27 }  ,fill opacity=1 ][line width=0.08]  [draw opacity=0] (8.93,-4.29) -- (0,0) -- (8.93,4.29) -- cycle    ;
\draw  [fill={rgb, 255:red, 126; green, 211; blue, 33 }  ,fill opacity=1 ] (428.72,169.62) -- (367.2,169.62) .. controls (345.72,161.08) and (313.78,153.16) .. (296.79,125.5) .. controls (296.35,123.87) and (295.88,122.25) .. (295.38,120.65) -- (296.19,118.55) -- (428.72,169.62) -- cycle ;
\draw  [fill={rgb, 255:red, 208; green, 2; blue, 27 }  ,fill opacity=1 ] (292.35,117.5) .. controls (292.35,116.12) and (293.47,115) .. (294.85,115) .. controls (296.23,115) and (297.35,116.12) .. (297.35,117.5) .. controls (297.35,118.88) and (296.23,120) .. (294.85,120) .. controls (293.47,120) and (292.35,118.88) .. (292.35,117.5) -- cycle ;

\draw (130,258.8) node [anchor=north west][inner sep=0.75pt]    {$M_{1}$};
\draw (550,41.4) node [anchor=north west][inner sep=0.75pt]    {$M_{2}$};
\draw (609.2,133.6) node [anchor=north west][inner sep=0.75pt]    {$h$};
\draw (436,79) node [anchor=north west][inner sep=0.75pt]  [color={rgb, 255:red, 208; green, 2; blue, 27 }  ,opacity=1 ]  {$H_{\lambda ,h}$};

\end{tikzpicture}

 \caption{Illustration of the deformation retraction $H_{\lambda,h}$. The region $\mathcal{F}^{-h}_{\lambda+2Lh^{\alpha}}$ is shown in red, while $\mathcal{F}_{\lambda}$ is depicted in gray. Points closer to $\mathcal{F}^{-h}_{\lambda+2Lh^{\alpha}}$ experience less displacement under $F_{\lambda,h}$, which ensures its continuity. The green region represents elements of $\mathcal{M}_{\lambda,h}$ that do not belong to $\mathcal{F}^{-h}_{\lambda+2Lh^{\alpha}}$ but will be included in $\mathcal{F}^{-h}_{\lambda+\varepsilon}$ for some $\varepsilon \asymp h^{\alpha}$ (see proof of Lemma \ref{lemmaFiltEquiv2}).}
    \label{fig:enter-label}
\end{figure}
\begin{lmm}
\label{lemmaFiltEquiv2}
For all $0<h<\frac{R}{2}$ and $\lambda\in\mathbb{R}$, $H_{\lambda,h}$ is a deformation retraction from $\mathcal{N}_{\lambda,h}$ onto $\mathcal{M}_{\lambda,h}$. Furthermore, we have $\mathcal{F}_{\lambda}^{-h}\subset\mathcal{M}_{\lambda,h}\subset \mathcal{F}_{\lambda+(2+5^{\alpha})Lh^{\alpha}}^{-h}$ and $\mathcal{F}_{\lambda}\subset\mathcal{N}_{\lambda,h}\subset \mathcal{F}_{\lambda+Lh^{\alpha}}.$
\end{lmm}
The proof of Lemma \ref{lemmaFiltEquiv2} is provided in Appendix \ref{proof Filtequiv 2}. Similarly to Lemma \ref{lemmaFiltEquiv}, Lemma \ref{lemmaFiltEquiv2} provides a morphism from $\mathbb{V}_{f,s}$ to $(H_{s}(\mathcal{F}_{\lambda}^{-\sqrt{d}h}))_{\lambda\in\mathbb{R}}$.\\\\ 
Next, we state a technical result: Lemma \ref{lemma-histo 2}, which will allow us to prove that the morphisms we constructed previously induce an interleaving. 
\begin{lmm}
\label{lemma-histo 2}
Let $\lambda\in\mathbb{R}$ and $h\asymp \left(\frac{\log(n)}{n}\right)^{\frac{1}{d+2\alpha}}$, for sufficiently large $n$, we have, for all $f\in S_{d}(M,L,\alpha,R)$, for all $x\in \widehat{\mathcal{F}}_{\lambda}\cap S_{\lambda+\sqrt{2}\sigma N_h h^{\alpha},\sqrt{d}h}$,
\begin{equation}
\label{lemma-histo 2a}
\left[x,\xi(x)\right]\subset \widehat{\mathcal{F}}_{\lambda+(c_{1}\sqrt{2}\sigma N_{h}+c_{2})h^{\alpha}}
\end{equation}
and for all $x\in \widehat{\mathcal{F}}_{\lambda}\cap P_{\lambda,\sqrt{d}h}$,
\begin{equation}
\label{lemma-histo 2b}
\left[x,\gamma_{\lambda,\sqrt{d}h}(x)\right]\subset \widehat{\mathcal{F}}_{\lambda+(c_{1}\sqrt{2}\sigma N_{h}+c_{2})h^{\alpha}}
\end{equation}
with $c_{1}=2(1+(2d)^{d})$, $c_{2}=(1+(2d)^{d})(5L\sqrt{d}+2\kappa M)$ and $\kappa$ a constant depending only on $d$ and $R$. 
\end{lmm}
In particular, as $\widehat{\mathcal{F}}_{\lambda}\subset\mathcal{F}_{\lambda+\sqrt{2}\sigma N_h h^{\alpha}}^{\sqrt{d}h}$ (by Lemma \ref{interleavish 3}), Assertion (\ref{lemma-histo 2a}) of Lemma~\ref{lemma-histo 2} ensures that, for sufficiently large $n$ and for all \( x \in \widehat{\mathcal{F}}_{\lambda} \):
\[
F_{\lambda+\sqrt{2}\sigma N_{h}h^{\alpha},\sqrt{d}h}(x,[0,1])\subset\widehat{\mathcal{F}}_{\lambda+ch}
\]
for some constant \( c \) depending on the model parameters and \( N_h \). This implies that any chain \( C\in C_{s}(\widehat{\mathcal{F}}_{\lambda}) \), \( s \in \mathbb{N} \), is homologically equivalent, within \( \widehat{\mathcal{F}}_{\lambda+ch} \), to its deformed counterpart:
$$F^{\#}_{\lambda+\sqrt{2}\sigma N_{h}h^{\alpha},\sqrt{d}h}(C,1).$$
Similar guarantees also hold for the deformation retraction involved in Lemma \ref{lemmaFiltEquiv2}. More precisely, it follows from Assertion (\ref{lemma-histo 2b}) of Lemma~\ref{lemma-histo 2}, that for all \( x \in \widehat{\mathcal{F}}_{\lambda}\cap \mathcal{F}_{\lambda}\), 
$$H_{\lambda,\sqrt{d}h}(x,[0,1])\subset \widehat{\mathcal{F}}_{\lambda+ch}$$
and consequently for any $C\in C_{s}\left(\widehat{\mathcal{F}}_{\lambda}\cap \mathcal{F}_{\lambda}\right)$, $C$ is homologically equivalent, within \( \widehat{\mathcal{F}}_{\lambda+ch} \) to:
$$H^{\#}_{\lambda,\sqrt{d}h}(C,1).$$
These key properties will be instrumental in showing that the morphisms \( \phi \) and \( \psi \) satisfy Definition~\ref{def: interleav}, in particular, to prove that the diagrams involved commute. This will allow us to establish an interleaving with a constant depending on \( N_h \). To complete the proof, we will therefore require a concentration inequality for \( N_h \), which is stated in the following lemma and proved in Appendix~\ref{proof noise}.
\begin{lmm}

\label{lemma noise 2}
Let $h>\sfrac{1}{N}$,
$$\mathbb{P}\left(N_{h}\geq t\right)\leq 2\left(\frac{1}{h}\right)^{d}\exp\left(-t^{2}\log\left(1/h^{d}\right)\right).$$
Consequently, there exist two constants $C_{0}$ and $C_{1}$ depending only on $d$ such that, for all $h<1$,
$$\mathbb{P}\left(N_{h}\leq t \right)\leq C_{0}\exp\left(-C_{1}t^{2}\right).$$
\end{lmm}
\label{Upperbounds section}
Equipped with these lemmas, we can now prove Theorem \ref{MainProp}, from which follows Theorem \ref{estimation borne sup}.
\begin{thm}
\label{MainProp}
Let $h\asymp \left(\frac{\log(n)}{n}\right)^{\frac{1}{d+2\alpha}}$. There exist $\Tilde{C_{0}}$ and $\Tilde{C_{1}}$ (depending only on $M$, $L$, $\alpha$, $R$ and $\sigma$) such that, for all $t>0$,
$$\sup\limits_{f\in S_{d}(M,L,\alpha,R)}\mathbb{P}\left(d_{b}\left(\widehat{\operatorname{dgm}(f)},\operatorname{dgm}(f)\right)\geq t\left(\frac{\log(n)}{n}\right)^{\frac{\alpha}{d+2\alpha}}\right)\leq \Tilde{C_{0}}\exp\left(-\Tilde{C_{1}}t^{2}\right).$$ 
\end{thm}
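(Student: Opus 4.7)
The plan is to combine Propositions \ref{interleavish 3}--\ref{lemma-histo 2} to construct, pathwise, an $\varepsilon$-interleaving between $\mathbb{V}_{f,s}$ and $\widehat{\mathbb{V}}_{f,s}$ of depth $\varepsilon=(c_{1}\|W\|_{G(h)}+c_{2})h^{\alpha}$, then apply the algebraic stability theorem and close the argument with the Gaussian tail bound of Proposition \ref{lemma noise 2}. Throughout I would fix $h\simeq(\theta^{2}\log(1/\theta))^{1/(d+2\alpha)}$ so that $h^{\alpha}$ is precisely of order $(\theta^{2}\log(1/\theta))^{\alpha/(d+2\alpha)}$, which is the scale appearing in the target bound.

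To build the two interleaving families, I would chain inclusion-induced and retraction-induced morphisms. For $\phi_{\lambda}:\mathbb{V}_{f,s,\lambda}\to \widehat{\mathbb{V}}_{f,s,\lambda+\varepsilon}$, the inclusion $\mathcal{F}_{\lambda}\hookrightarrow \mathcal{N}_{\lambda,\sqrt{d}h}$ together with the homology-level isomorphism induced by $H_{\lambda,\sqrt{d}h}$ and the inclusion $\mathcal{M}_{\lambda,\sqrt{d}h}\subset \mathcal{F}_{\lambda+(2+5^{\alpha})Lh^{\alpha}}^{-\sqrt{d}h}$ (Proposition \ref{lemmaFiltEquiv2}) followed by the left half of Proposition \ref{interleavish 3} lands us in $\widehat{\mathbb{V}}_{f,s,\lambda+\varepsilon}$. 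Symmetrically, for $\psi_{\lambda}:\widehat{\mathbb{V}}_{f,s,\lambda}\to \mathbb{V}_{f,s,\lambda+\varepsilon}$, the right half of Proposition \ref{interleavish 3} places us inside $\mathcal{F}_{\lambda'}^{\sqrt{d}h}\subset\mathcal{K}_{\lambda',\sqrt{d}h}$ with $\lambda'=\lambda+\|W\|_{G(h)}h^{\alpha}$, and the retraction from Proposition \ref{lemmaFiltEquiv} composed with $\mathcal{G}_{\lambda',\sqrt{d}h}\subset \mathcal{F}_{\lambda'+L(1+3^{\alpha})h^{\alpha}}$ lands us in $\mathbb{V}_{f,s,\lambda+\varepsilon}$. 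Absorbing the additive $L$-constants together with the $\|W\|_{G(h)}h^{\alpha}$ shift into $c_{1},c_{2}$ yields exactly the depth $\varepsilon=(c_{1}\|W\|_{G(h)}+c_{2})h^{\alpha}$.

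Next I would verify the four commutativity conditions of the interleaving definition. The key point is that each "triangle" (e.g.\ $\psi_{\lambda+\varepsilon}\circ\phi_{\lambda}=v_{\lambda}^{\lambda+2\varepsilon}$) and each "square" reduces, after homotopy invariance of $H_{s}$, to comparing two inclusion maps between appropriately nested subsets of $\mathbb{R}^{d}$; commutativity then follows from functoriality. Proposition \ref{lemma-histo 2} is precisely what makes this rigorous on the estimated side: it guarantees that the straight-line trajectories $[x,\xi(x)]$ and $[x,\gamma_{\lambda,\sqrt{d}h}(x)]$ used inside $F_{\lambda,\sqrt{d}h}$ and $H_{\lambda,\sqrt{d}h}$ sit inside $\widehat{\mathcal{F}}_{\lambda+(c_{1}\|W\|_{G(h)}+c_{2})h^{\alpha}}$, so the homotopies realising the retractions remain valid at the correct filtration level in $\widehat{\mathbb{V}}_{f,s}$. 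The algebraic stability theorem then yields, uniformly in $f\in S_{d}(L,\alpha,R)$,
$$d_{b}\bigl(\widehat{\operatorname{dgm}(f)},\operatorname{dgm}(f)\bigr)\leq (c_{1}\|W\|_{G(h)}+c_{2})h^{\alpha}.$$
Dividing by $h^{\alpha}\simeq(\theta^{2}\log(1/\theta))^{\alpha/(d+2\alpha)}$, the event in Theorem \ref{MainProp} is contained in $\{\|W\|_{G(h)}\geq at-b\}$ for constants $a,b>0$ depending only on $d,L,\alpha$; Proposition \ref{lemma noise 2} bounds its probability by $C_{0}\exp(-C_{1}(at-b)^{2})$, which absorbs into the form $\Tilde{C_{0}}\exp(-\Tilde{C_{1}}t^{2})$ after enlarging constants (and, for small $t$, trivially increasing $\Tilde{C_{0}}$ so that the bound is meaningful).

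The main obstacle I expect is the commutativity verification. The retractions $F_{\lambda,\sqrt{d}h}$ and $H_{\lambda,\sqrt{d}h}$ are constructed $\lambda$-by-$\lambda$, and the images $\mathcal{G}_{\lambda,\sqrt{d}h}$, $\mathcal{M}_{\lambda,\sqrt{d}h}$ are not monotone in $\lambda$, so the two natural ways of sending $H_{s}(\mathcal{F}_{\lambda})$ into $H_{s}(\mathcal{F}_{\lambda+2\varepsilon})$ (through $\psi_{\lambda+\varepsilon}\circ\phi_{\lambda}$ versus through $v_{\lambda}^{\lambda+2\varepsilon}$) must be reconciled via an intermediate ambient space in which both factor. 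This reconciliation rests on the nested inclusions provided by Propositions \ref{lemmaFiltEquiv}--\ref{lemmaFiltEquiv2} (showing that both retractions live inside large enough sublevel sets to be trivial up to homotopy) together with Proposition \ref{lemma-histo 2} (providing the analogous control on the estimated side), and will require carefully pasting homotopies across the four morphisms in the chain.
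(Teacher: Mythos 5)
Your overall plan matches the paper's: construct two families of morphisms from the chains of inclusions and retractions in Propositions~\ref{interleavish 3}--\ref{lemmaFiltEquiv2}, use Proposition~\ref{lemma-histo 2} to keep the homotopy trajectories at a controlled filtration level on the estimated side, check the four interleaving diagrams, invoke algebraic stability, and close with the sub-Gaussian tail from Proposition~\ref{lemma noise 2}. The identification of the event $\{\sup_f d_b \ge t h^\alpha\}$ with a tail event for $\|W\|_{G(h)}$ and the absorption of constants is also correct.

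However, there is a concrete gap that would block the interleaving as you describe it. You assert that Proposition~\ref{lemma-histo 2} guarantees the straight-line trajectories $[x,\xi(x)]$ and $[x,\gamma_{\lambda,\sqrt{d}h}(x)]$ stay inside $\widehat{\mathcal{F}}_{\lambda+(c_{1}\|W\|_{G(h)}+c_{2})h^{\alpha}}$. That is not what the proposition says: it places them only inside the $Ch$-\emph{thickening} $\widehat{\mathcal{F}}_{\lambda+(c_{1}\|W\|_{G(h)}+c_{2})h^{\alpha}}^{Ch}$, which is not a cubical sublevel set of the histogram estimator and does not belong to the filtration underlying $\widehat{\mathbb{V}}_{f,s}$. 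Consequently, when you try to verify the triangle condition $\psi_{\lambda+\varepsilon}\circ\phi_{\lambda}=\hat{v}_{\lambda}^{\lambda+2\varepsilon}$ that starts and ends in the estimated module, the pasted homotopy escapes $\widehat{\mathcal{F}}_{\lambda+\cdot}$ into its thickening and the argument stalls. The paper's resolution is not just notational bookkeeping: it replaces $\widehat{\mathbb{V}}_{f,s}$ by the isomorphic module built from $(\widehat{\mathcal{F}}_{\lambda}^{h/4})_{\lambda}$, using the fact that $\widehat{\mathcal{F}}_{\lambda}$ is a union of hypercubes of side $h$ and hence has $\mu$-reach at least $h/2$ for $\mu<1/2$, so that by Theorem~12 of \cite{kim2020homotopy} the thickening $\widehat{\mathcal{F}}_{\lambda}^{h/4}$ deformation retracts onto $\widehat{\mathcal{F}}_{\lambda}$. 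Proposition~\ref{lemma-histo 2} is then applied with $C=1/4$, landing the homotopy trajectories exactly in $\widehat{\mathcal{F}}_{\lambda+\cdot}^{h/4}$, and the deformation retract is used after each leg to bring cycles back into $\widehat{\mathcal{F}}_{\lambda+\cdot}$ before the next proposition is applied. You would need to add this $\mu$-reach and retraction argument (or some equivalent device allowing you to pass from $\widehat{\mathcal{F}}^{Ch}_{\lambda}$ back to $\widehat{\mathcal{F}}_{\lambda}$) for the triangle commutativity to go through; without it the interleaving, and hence the theorem, is not established.
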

\begin{proof}
It suffices to show the result for large $n$ (up to rescaling $\Tilde{C_{0}}$). Hence, we assume that $n$ is such that $2\sqrt{d}h<R$ and Lemma \ref{lemma-histo 2} holds. The proof is essentially a combination of the lemmas stated throughout this section. We begin by specifying the exact construction of $\phi_{\lambda}$, which will be obtained by the composition of the following linear maps:
$$j_{1,\lambda}:H_{s}\left(\widehat{\mathcal{F}}_{\lambda}\right)\rightarrow H_{s}\left(\mathcal{K}_{\lambda+\sqrt{2}\sigma N_{h}h^{\alpha},\sqrt{d}h}\right)$$
the map induced by the inclusion $\widehat{\mathcal{F}}_{\lambda}\subset  \mathcal{F}_{\lambda+\sqrt{2}\sigma N_{h}h^{\alpha}}^{\sqrt{d}h}\subset \mathcal{K}_{\lambda+\sqrt{2}\sigma N_{h}h^{\alpha},\sqrt{d}h}$ obtained by combining Lemma \ref{interleavish 3} and \ref{lemmaFiltEquiv},
$$j_{2,\lambda}:H_{s}\left(\mathcal{K}_{\lambda+\sqrt{2}\sigma N_{h}h^{\alpha},\sqrt{d}h}\right)\rightarrow H_{s}\left(\mathcal{G}_{\lambda+\sqrt{2}\sigma N_{h}h^{\alpha},\sqrt{d}h}\right)$$
induced by the deformation retraction from Lemma \ref{lemmaFiltEquiv}, and, with $k_{1}=Ld^{\alpha/2}\left(1+3^{\alpha}\right)+c_{2}$,
$$j_{3,\lambda}:H_{s}\left(\mathcal{G}_{\lambda+\sqrt{2}\sigma N_{h}h^{\alpha},\sqrt{d}h}\right)\rightarrow H_{s}\left(\mathcal{F}_{\lambda+\left(k_{1}+(1+c_{1})\sqrt{2}\sigma N_{h}\right)h^{\alpha}}\right)$$
the map induced by inclusion, following again from Lemma \ref{lemmaFiltEquiv}. Now, we can define:
$$
\left\{
    \begin{array}{ll}
\phi_{\lambda}:H_{s}\left(\widehat{\mathcal{F}}_{\lambda}\right)\rightarrow H_{s}\left(\mathcal{F}_{\lambda+\left(k_{1}+(1+c_{1})\sqrt{2}\sigma N_{h}\right)h^{\alpha}}\right)\\
    \phi_{\lambda}= j_{3,\lambda}\circ j_{2,\lambda}\circ j_{1,\lambda}
    \end{array}
\right.
$$
This provides the first persistence module morphism $\phi$. Let us construct the second one by specifying the construction of $\psi_{\lambda}$, which will be obtained by composition of the following linear maps:
$$j_{4,\lambda}:H_{s}\left(\mathcal{F}_{\lambda}\right)\rightarrow H_{s}\left(\mathcal{N}_{\lambda,\sqrt{d}h}\right)$$
the map induced by the inclusion $\mathcal{F}_{\lambda}\subset\mathcal{N}_{\lambda,\sqrt{d}h}$ from Lemma \ref{lemmaFiltEquiv2},
$$j_{5,\lambda}:H_{s}\left(\mathcal{N}_{\lambda,\sqrt{d}h}\right)\rightarrow H_{s}\left(\mathcal{M}_{\lambda,\sqrt{d}h}\right)$$
the map induced by the deformation retraction from Lemma \ref{lemmaFiltEquiv2}, and, with $k_{2}=Ld^{\alpha/2}(2+5^{\alpha})+c_{2}$,
$$j_{6,\lambda}:H_{s}\left(\mathcal{M}_{\lambda,\sqrt{d}h}\right)\rightarrow H_{s}\left(\widehat{\mathcal{F}}_{\lambda+\left(k_{2}+(1+c_{1})\sqrt{2}\sigma N_{h}\right)h^{\alpha}}\right)$$
induced by the inclusion $\mathcal{M}_{\lambda,\sqrt{d}h}\subset \mathcal{F}_{\lambda+k_{2}h^{\alpha}}^{-\sqrt{d}h} \subset \widehat{\mathcal{F}}_{\lambda+\left(k_{2}+(1+c_1)\sqrt{2}\sigma N_{h}\right)h^{\alpha}}$, from the combination of Lemma \ref{interleavish 3} and \ref{lemmaFiltEquiv2}. We then define:
$$
\left\{
    \begin{array}{ll}
\psi_{\lambda}:H_{s}\left(\mathcal{F}_{\lambda}\right)\longrightarrow H_{s}\left(\widehat{\mathcal{F}}_{\lambda+\left(k_{2}+(1+c_{1})\sqrt{2}\sigma N_{h}\right) h^{\alpha}}\right)\\
\psi_{\lambda}=j_{6,\lambda}\circ j_{5,\lambda}\circ j_{4,\lambda}
    \end{array}
\right.
$$
which provides the second morphism $\psi$. Now, we show that $\psi$ and $\phi$ induce an interleaving between $\widehat{\mathbb{V}}_{f,s}$ and $\mathbb{V}_{s,f}$. More precisely, we show that the following diagrams commute for all $\lambda<\lambda'$. For compactness of notations, let
$K_{1}=k_{1}+(1+c_{1})\sqrt{2}\sigma N_{h}$ and $K_{2}=k_{2}+(1+c_{1})\sqrt{2}\sigma N_{h}$.
\begin{equation}
\label{diagram2}
\begin{tikzcd}
	{H_{s}\left(\widehat{\mathcal{F}}_{\lambda}\right)} && {H_{s}\left(\widehat{\mathcal{F}}_{\lambda'}\right)} \\
	\\
	{H_{s}\left(\mathcal{F}_{\lambda+K_{1}h^{\alpha}}\right)} && {H_{s}\left(\mathcal{F}_{\lambda'+K_{1}h^{\alpha}}\right)}
	\arrow[from=1-1, to=3-1,"\phi_{\lambda}"]
	\arrow[from=1-1, to=1-3,"\widehat{v}_{\lambda}^{\lambda'}"]
	\arrow[from=1-3, to=3-3,"\phi_{\lambda'}"]
	\arrow[from=3-1, to=3-3,"v_{\lambda+K_{1}h^{\alpha}}^{\lambda'+K_{1}h^{\alpha}}"]
\end{tikzcd}
\end{equation}
\begin{equation}
\label{diagram1}
\begin{tikzcd}
	{H_{s}\left(\mathcal{F}_{\lambda}\right)} &&& {H_{s}\left(\mathcal{F}_{\lambda'}\right)} \\
	\\
	{H_{s}\left(\widehat{\mathcal{F}}_{\lambda+K_{2}h^{\alpha}}\right)} &&& {H_{s}\left(\widehat{\mathcal{F}}_{\lambda'+K_{2}h^{\alpha}}\right)}
	\arrow[from=1-1, to=3-1,"\psi_{\lambda}"]
	\arrow[from=1-1, to=1-4,"v_{\lambda}^{\lambda'}"]
	\arrow[from=1-4, to=3-4,"\psi_{\lambda'}"]
	\arrow[from=3-1, to=3-4,"\widehat{v}_{\lambda+K_{2}h^{\alpha}}^{\lambda'+K_{2}h^{\alpha}} "]
\end{tikzcd}
\end{equation}
\begin{equation}
\label{diagram3}
\begin{tikzcd}
	{H_{s}\left(\widehat{\mathcal{F}}_{\lambda}\right)} && {H_{s}\left(\widehat{\mathcal{F}}_{\lambda+(K_{1}+K_{2})h^{\alpha}}\right)} \\
	\\
	& {H_{s}\left(\mathcal{F}_{\lambda+K_{1}h^{\alpha}}\right)}
	\arrow[from=1-1, to=3-2,"\phi_{\lambda}"]
	\arrow[from=3-2, to=1-3,"\psi_{\lambda+K_{1}h^{\alpha}}"]
	\arrow[from=1-1, to=1-3,"\widehat{v}_{\lambda}^{\lambda+(K_{1}+K_{2})h^{\alpha}}"]
\end{tikzcd}
\end{equation}
\begin{equation}
\label{diagram4}
\begin{tikzcd}
	{H_{s}\left(\mathcal{F}_{\lambda}\right)} && {H_{s}\left(\mathcal{F}_{\lambda+(K_{1}+K_{2})h^{\alpha}}\right)} \\
	\\
	& {H_{s}\left(\widehat{\mathcal{F}}_{\lambda+K_{2}h^{\alpha}}\right)}
	\arrow[from=1-1, to=1-3,"v_{\lambda}^{\lambda+(K_{1}+K_{2})h^{\alpha}}"]
	\arrow[from=1-1, to=3-2,"\psi_{\lambda}"]
	\arrow[from=3-2, to=1-3,"\phi_{\lambda+K_{2}h^{\alpha}}"]
\end{tikzcd}
\end{equation}
\begin{itemize}
\item  \textbf{Diagram \ref{diagram2}:} We can rewrite the diagram as (unspecified maps are simply induced by set inclusion),
 \begin{equation*}
    \begin{tikzcd}
	{H_{s}\left(\widehat{\mathcal{F}}_{\lambda}\right)} && {H_{s}\left(\widehat{\mathcal{F}}_{\lambda'}\right)} \\
	{H_{s}\left(\mathcal{K}_{\lambda+\sqrt{2}\sigma N_{h}h^{\alpha},\sqrt{d}h}\right)} && {H_{s}\left(\mathcal{K}_{\lambda'+\sqrt{2}\sigma N_{h}h^{\alpha},\sqrt{d}h}\right)} \\
	{H_{s}\left(\mathcal{G}_{\lambda+\sqrt{2}\sigma N_{h}h^{\alpha},\sqrt{d}h}\right)} && {H_{s}\left(\mathcal{G}_{\lambda'+\sqrt{2}\sigma N_{h}h^{\alpha},\sqrt{d}h}\right)} \\
	{H_{s}\left(\mathcal{F}_{\lambda+K_{1}h^{\alpha}}\right)} && {H_{s}\left(\mathcal{F}_{\lambda'+K_{1}h^{\alpha}}\right)}	\arrow["\widehat{v}_{\lambda}^{\lambda'}",from=1-1, to=1-3,]
	\arrow[from=1-1, to=2-1]
	\arrow["{j_{2,\lambda}}", tail reversed, from=2-1, to=3-1]
	\arrow[from=2-1, to=2-3]
	\arrow["{j_{2,\lambda'}}", tail reversed, from=2-3, to=3-3]
	\arrow[from=3-1, to=4-1]
	\arrow[from=3-1, to=3-3]
	\arrow[from=3-3, to=4-3]
    \arrow[from=1-3, to=2-3]
        \arrow[from=2-1, to=3-1]
        \arrow[from=2-3, to=3-3]
        \arrow["\widehat{v}_{\lambda+K_{1}h^{\alpha}}^{\lambda'+K_{1}h^{\alpha}}",from=4-1, to=4-3]
\end{tikzcd}
 \end{equation*}
 As $j_{2,\lambda}$ and $j_{2,\lambda'}$ arise from deformation retractions and other maps are simply induced by inclusion, all faces of Diagram \ref{diagram2} commute. Consequently, Diagram \ref{diagram2} commutes.
\item \textbf{Diagram \ref{diagram1}:} it can be decomposed similarly to Diagram \ref{diagram2}. One can check that the same reasoning applies.
\item \textbf{Diagram \ref{diagram3}:} Here, the proof is slightly more technical and requires more subtle geometric arguments, in particular involving Lemma \ref{lemma-histo 2}. Let $C\in C_{s}\left(\widehat{\mathcal{F}}_{\lambda}\right)$ and $[C]$ be its classes in $H_{s}\left(\widehat{\mathcal{F}}_{\lambda}\right)$. To prove the commutativity of Diagram \ref{diagram3}, it suffices to show that:
\begin{equation}
\label{eq: diag10-0}
\psi_{\lambda+K_1 h_{\alpha}}\left(\phi_{\lambda}([C])\right)=[C]\in H_{s}\left(\widehat{\mathcal{F}}_{\lambda+(K_1+K_2)h^{\alpha}}\right). 
\end{equation}
First, observe that $\phi_{\lambda}$ maps $[C]$ to $[C']$ with,
$$C'=F^{\#}_{\lambda+\sqrt{2}\sigma N_{h}h^{\alpha},\sqrt{d}h}(C,1).$$ 
We begin by proving that:
\begin{equation}
\label{eq: diag10-3}
[C^{'}]=[C]\in H_{s}\left(\widehat{\mathcal{F}}_{\lambda+(K_1+K_2)h^{\alpha}}\right).
\end{equation}
Denote $\overline{F}_{\lambda+\sqrt{2}\sigma N_{h}h^{\alpha},\sqrt{d}h}$ the restriction of $F_{\lambda+\sqrt{2}\sigma N_{h}h^{\alpha},\sqrt{d}h}$ to:
$$F_{\lambda+\sqrt{2}\sigma N_{h}h^{\alpha},\sqrt{d}h}( \widehat{\mathcal{F}}_{\lambda},[0,1]).$$ It is a deformation retraction from $F_{\lambda+\sqrt{2}\sigma N_{h}h^{\alpha},\sqrt{d}h}( \widehat{\mathcal{F}}_{\lambda},[0,1])$ to $F_{\lambda+\sqrt{2}\sigma N_{h}h^{\alpha},\sqrt{d}h}( \widehat{\mathcal{F}}_{\lambda},1)$. Thus, by homology invariance under deformation retraction,
\begin{align*}
[C]&=\left[\overline{F}^{\#}_{\lambda+\sqrt{2}\sigma N_{h}h^{\alpha},\sqrt{d}h}(C,1)\right]\\
&=\left[F^{\#}_{\lambda+\sqrt{2}\sigma N_{h}h^{\alpha},\sqrt{d}h}(C,1)\right]\\
&=[C']\in H_{s}\left(F_{\lambda+\sqrt{2}\sigma N_{h}h^{\alpha},\sqrt{d}h}( \widehat{\mathcal{F}}_{\lambda},[0,1])\right).
\end{align*}
Assertion (\ref{lemma-histo 2a}) of Lemma \ref{lemma-histo 2} ensures that 
$$F_{\lambda+\sqrt{2}\sigma N_{h}h^{\alpha},\sqrt{d}h}( \widehat{\mathcal{F}}_{\lambda},[0,1])\subset \widehat{\mathcal{F}}_{\lambda+K_{1}h^{\alpha}}$$
and thus,
\begin{equation*}
[C]=[C']\in H_{s}\left(\widehat{\mathcal{F}}_{\lambda+K_{1}h^{\alpha}}\right).
\end{equation*}
As $\widehat{\mathcal{F}}_{\lambda+K_{1}h^{\alpha}}\subset \widehat{\mathcal{F}}_{\lambda+(K_{1}+K_2)h^{\alpha}}$, this proves \eqref{eq: diag10-3}. Next, as $C^{'}\in C_{s}(\widehat{\mathcal{F}}_{\lambda+K_{1}h^{\alpha}}\cap\mathcal{F}_{\lambda+K_{1}h^{\alpha}})$, $\psi_{\lambda+K_{1}h^{\alpha}}$ maps $[C']$ to $[C^{''}]$, with,
$$C^{''}=H^{\#}_{\lambda+K_{1}h^{\alpha},\sqrt{d}h}(C',1).$$
Let us prove that: 
\begin{equation}
\label{eq: diag10-4}
[C^{'}]=[C^{''}]\in H_{s}\left(\widehat{\mathcal{F}}_{\lambda+(K_1+K_2)h^{\alpha}}\right).
\end{equation}
Denote $\overline{H}_{\lambda+K_{1}h^{\alpha},\sqrt{d}h}$ the restriction of $H_{\lambda+K_{1}h^{\alpha},\sqrt{d}h}$ to:
$$H_{\lambda+K_{1}h^{\alpha},\sqrt{d}h}\left(\widehat{\mathcal{F}}_{\lambda+K_{1}h^{\alpha}}\cap\mathcal{F}_{\lambda+K_{1}h^{\alpha}},[0,1]\right).$$ It is a deformation retraction and thus, again, by homology invariance under deformation retract,
\begin{align*}
    [C']&=\left[\overline{H}^{\#}_{\lambda+K_{1}h^{\alpha},\sqrt{d}h}(C',1)\right]\\
    &=\left[H^{\#}_{\lambda+K_{1}h^{\alpha},\sqrt{d}h}(C',1)\right]\\
    &=[C^{''}]\in H_{s}\left(H_{\lambda+K_{1}h^{\alpha},\sqrt{d}h}\left(\widehat{\mathcal{F}}_{\lambda+K_{1}h^{\alpha}}\cap\mathcal{F}_{\lambda+K_{1}h^{\alpha}},[0,1]\right)\right)
\end{align*}
Assertion (\ref{lemma-histo 2b}) of Lemma \ref{lemma-histo 2} then ensures that:
$$H_{\lambda+K_{1}h^{\alpha},\sqrt{d}h}\left(\widehat{\mathcal{F}}_{\lambda+K_{1}h^{\alpha}}\cap\mathcal{F}_{\lambda+K_{1}h^{\alpha}},[0,1]\right)\subset\widehat{\mathcal{F}}_{\lambda+(K_{1}+K_{2})h^{\alpha}}$$
and consequently proves \eqref{eq: diag10-4}. Combining \eqref{eq: diag10-4} and \eqref{eq: diag10-3} we have \eqref{eq: diag10-0} and therefore Diagram \ref{diagram3} commutes.
\item \textbf{Diagram \ref{diagram4}:} Let $C\in C_{s}\left(\mathcal{F}_{\lambda}\right)$, $\psi_{\lambda}$ maps $[C]$ to $[C']$, with,
$$C'=H^{\#}_{\lambda,\sqrt{d}h}(C,1).$$
As $\mathcal{M}_{\lambda,\sqrt{d}h}\subset \mathcal{G}_{\lambda+K_{2}h^{\alpha},\sqrt{d}h}$, the linear map $\phi_{\lambda+K_{2}h^{\alpha}}$ behaves as an inclusion induced map, mapping $[C']$ to $[C']$. 
From Lemma \ref{lemmaFiltEquiv2}, we have:
$$H_{\lambda,\sqrt{d}h}(\mathcal{F}_{\lambda},[0,1])\subset \mathcal{N}_{\lambda,\sqrt{d}h}\subset \mathcal{F}_{\lambda+(K_{1}+K_{2})h^{\alpha}}.$$
Thus,
$$[C]=[C']\in H_{s}\left(\mathcal{F}_{\lambda+(K_{1}+K_{2})h^{\alpha}}\right)$$
and Diagram \ref{diagram4} commutes.
\end{itemize}
The commutativity of diagrams \ref{diagram1},\ref{diagram2},\ref{diagram3} and \ref{diagram4} means that $\widehat{\mathbb{V}}_{f,s}$ and $\mathbb{V}_{f,s}$ are $(K_{1}+K_{2})h^{\alpha}$ interleaved, and thus we get from the algebraic stability theorem \citep{Chazal2009} that
\begin{equation*}
d_{b}\left(\operatorname{dgm}\left(\widehat{\mathbb{V}}_{f,s}\right),\operatorname{dgm}\left(\mathbb{V}_{f,s}\right)\right)\leq (K_{1}+K_{2})h^{\alpha} 
\end{equation*}
and as it holds for all $s\in\mathbb{N}$,
$$d_{b}\left(\widehat{\operatorname{dgm}(f)},\operatorname{dgm}(f)\right)\leq(K_{1}+K_{2})h^{\alpha}.$$
Now, using Lemma \ref{lemma noise 2}, this implies that, for all $f\in S_{d}(M,L,\alpha,R)$,
\begin{align*}
&\quad\mathbb{P}\left(d_{b}\left(\widehat{\operatorname{dgm}(f)},\operatorname{dgm}(f)\right)\geq th^{\alpha}\right)\\
&\leq \mathbb{P}\left(K_{1}+K_{2}\geq t\right)\\
&=\mathbb{P}\left(N_{h}\geq \frac{t-k_{1}-k_{2}}{2\sqrt{2}\sigma(1+c_{1})}\right)\\
&\leq C_{0}\exp\left(-C_{1}\left(\frac{t-k_{1}-k_{2}}{2\sqrt{2}\sigma (1+c_{1})}\right)^{2}\right)\\
&\leq C_{0}\exp\left(2\frac{C_{1}(k_{1}+k_{2})}{\left(2\sqrt{2}\sigma (1+c_{1})\right)^{2}}t\right)\exp\left(-C_{1}\left(\frac{k_{1}+k_{2}}{2\sqrt{2}\sigma (1+c_{1})}\right)^{2}\right)\exp\left(-\frac{C_{1}}{\left(2\sqrt{2}\sigma (1+c_{1})\right)^{2}}t^{2}\right)\\
&=C_{0}\exp\left(-C_{1}\left(\frac{k_{1}+k_{2}}{(2\sqrt{2}\sigma (1+c_{1}))}\right)^{2}\right)\exp\left(2\frac{C_{1}(k_{1}+k_{2})}{\left(2\sqrt{2}\sigma (1+c_{1})\right)^{2}}t-\frac{C_{1}}{2\left(2\sqrt{2}\sigma (1+c_{1})\right)^{2}}t^{2}\right)\\
&\quad\times \exp\left(-\frac{C_{1}}{2\left(2\sqrt{2}\sigma (1+c_{1})\right)^{2}}t^{2}\right)
\end{align*}
Now, as for all $t\in\mathbb{R}$,
$$2\frac{C_{1}(k_{1}+k_{2})}{\left(2\sqrt{2}\sigma (1+c_{1})\right)^{2}}t-\frac{C_{1}}{2\left(2\sqrt{2}\sigma (1+c_{1})\right)^{2}}t^{2}\leq \frac{2C_{1}(k_1+k_2)^{2}}{\left(2\sqrt{2}\sigma (1+c_{1})\right)^{2}}$$
we have:
\begin{align*}
&\quad\mathbb{P}\left(d_{b}\left(\widehat{\operatorname{dgm}(f)},\operatorname{dgm}(f)\right)\geq th^{\alpha}\right)\\
&\leq C_{0}\exp\left(-C_{1}\left(\frac{k_{1}+k_{2}}{(2\sqrt{2}\sigma (1+c_{1}))}\right)^{2}+\frac{2C_{1}(k_1+k_2)^{2}}{\left(2\sqrt{2}\sigma (1+c_{1})\right)^{2}}\right)\exp\left(-\frac{C_{1}}{2\left(2\sqrt{2}\sigma (1+c_{1})\right)^{2}}t^{2}\right)\\
&= C_{0}\exp\left(\frac{C_{1}(k_1+k_2)^{2}}{\left(2\sqrt{2}\sigma (1+c_{1})\right)^{2}}\right)\exp\left(-\frac{C_{1}}{2\left(2\sqrt{2}\sigma (1+c_{1})\right)^{2}}t^{2}\right)
\end{align*}
and the result follows.
\end{proof}\vspace{0.25cm}
We can derive from this result bounds in expectation, proving Theorem \ref{estimation borne sup}.
\begin{thm}
\label{estimation borne sup}
Let $h\asymp \left(\frac{\log(n)}{n}\right)^{\frac{1}{d+2\alpha}}$, 
$$\underset{f\in S_{d}(M,L,\alpha,R)}{\sup}\quad\mathbb{E}\left(d_{b}\left(\widehat{\operatorname{dgm}(f)},\operatorname{dgm}(f)\right)\right)\lesssim \left(\frac{\log(n)}{n}\right)^{\frac{\alpha}{d+2\alpha}}$$
\end{thm}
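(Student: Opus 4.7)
The plan is to deduce the moment bound from the sub-Gaussian tail estimate of Theorem \ref{MainProp} by a standard integration argument. Writing $r_\theta = \left(\theta^2 \log(1/\theta)\right)^{\alpha/(d+2\alpha)}$ and $Z = \sup_{f \in S_d(L,\alpha,R)} d_b(\widehat{\operatorname{dgm}(f)}, \operatorname{dgm}(f))$, Theorem \ref{MainProp} supplies constants $\tilde{C}_0, \tilde{C}_1 > 0$ with $\mathbb{P}(Z \geq t\, r_\theta) \leq \tilde{C}_0 \exp(-\tilde{C}_1 t^2)$ for every $t > 0$. Since $Z$ dominates $d_b(\widehat{\operatorname{dgm}(f)}, \operatorname{dgm}(f))$ pointwise for each fixed $f$, we have $\sup_{f} \mathbb{E}[d_b(\widehat{\operatorname{dgm}(f)}, \operatorname{dgm}(f))^p] \leq \mathbb{E}[Z^p]$, so it suffices to bound $\mathbb{E}[Z^p]$.

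First I would apply the layer-cake formula $\mathbb{E}[Z^p] = \int_0^\infty p\, s^{p-1}\, \mathbb{P}(Z \geq s)\, ds$ and change variables $s = u\, r_\theta$ to obtain $\mathbb{E}[Z^p] = r_\theta^p \int_0^\infty p\, u^{p-1}\, \mathbb{P}(Z \geq u\, r_\theta)\, du$. Then I would plug in the tail bound, using the trivial estimate $\mathbb{P}(Z \geq u\, r_\theta) \leq 1$ on a neighborhood of zero if convenient, to get
$$\int_0^\infty p\, u^{p-1}\, \mathbb{P}(Z \geq u\, r_\theta)\, du \leq \int_0^\infty p\, u^{p-1}\, \min\!\left(1, \tilde{C}_0 e^{-\tilde{C}_1 u^2}\right) du =: M_p < \infty,$$
where $M_p$ depends only on $p$, $\tilde{C}_0$, $\tilde{C}_1$ (hence on $d, \alpha, L, R$). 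This yields $\mathbb{E}[Z^p] \leq M_p\, r_\theta^p$, which is exactly the claimed rate.

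There is no real obstacle here: the algebraic and geometric content is packaged inside Theorem \ref{MainProp} and Proposition \ref{lemma noise 2}. The only points to check are the uniformity in $f$ (handled by the uniform sup appearing already inside the probability in Theorem \ref{MainProp}) and the finiteness of $M_p$ (immediate from $u^{p-1} e^{-\tilde{C}_1 u^2}$ being integrable on $[0,\infty)$). Passing the sup over $f$ outside the expectation via the monotone bound $\sup_f \mathbb{E}[\cdot] \leq \mathbb{E}[\sup_f \cdot]$ completes the argument.
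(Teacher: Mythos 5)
Your proposal is correct and follows essentially the same route as the paper: invoke the sub-Gaussian concentration of Theorem~\ref{MainProp} and integrate the tail. The paper applies the layer-cake identity directly to the nonnegative variable $d_b(\cdot,\cdot)^p/h^{p\alpha}$ (giving the integrand $\tilde{C}_0\exp(-\tilde{C}_1 t^{2/p})$), while you apply the $p\,s^{p-1}$ form to $Z$ and change variables, which is an equivalent computation; you also make explicit the step $\sup_f \mathbb{E}[\cdot] \leq \mathbb{E}[\sup_f \cdot]$ that the paper leaves implicit.
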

\begin{proof}
 The sub-Gaussian concentration provided by Theorem \ref{MainProp} ensures that, for all $t>0$,
$$\mathbb{P}\left(\frac{d_{b}\left(\widehat{\operatorname{dgm}(f)},\operatorname{dgm}(f)\right)}{h^{\alpha}}\geq t\right)\leq \Tilde{C}_{0}\exp\left(-\Tilde{C}_{1}t^{2}\right)$$
 Now, we have:
 \begin{align*}
 &\quad\mathbb{E}\left(\frac{d_{b}\left(\widehat{\operatorname{dgm}(f)},\operatorname{dgm}(f)\right)}{h^{\alpha}}\right)\\
&=\int_{0}^{+\infty}\mathbb{P}\left(\frac{d_{b}\left(\widehat{\operatorname{dgm}(f)},\operatorname{dgm}(f)\right)}{h^{\alpha}}\geq t\right)dt\\
     &\leq \int_{0}^{+\infty}\Tilde{C}_{0}\exp\left(-\Tilde{C}_{1}t^{2}\right)dt<+\infty.
\end{align*}

\end{proof}
\section{Lower bounds}
\label{section LB}
In this section, we prove that the rates obtained in the previous section are optimal by deriving lower bounds on the minimax risk. 
\begin{thm}
\label{lowerbound}
    $$\underset{\widehat{\operatorname{dgm}(f)}}{\inf}\quad\underset{f\in S_{d}(M,L,\alpha,R)}{\sup}\quad\mathbb{E}\left(d_{b}\left(\widehat{\operatorname{dgm}(f)},\operatorname{dgm}(f)\right)\right)\gtrsim \left(\frac{\log(n)}{n}\right)^{\frac{\alpha}{d+2\alpha}}.$$
Where the infimum is taken over all the estimators of $\operatorname{dgm}(f)$.
\end{thm}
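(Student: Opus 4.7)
The plan is to establish the lower bound via a multi-hypothesis testing argument based on Fano's inequality (in Birgé's form), combined with the standard reduction that converts a lower bound on testing error into one on estimation error. Calibrate $h = (\theta^{2}\log(1/\theta))^{1/(d+2\alpha)}$ and $\eta = c_{0}h^{\alpha}$ for a small constant $c_{0}$ depending on $L, d, \alpha$, so that $\eta^{p}$ realizes the target rate. The aim is to construct a family $\{f_{0}, f_{1}, \ldots, f_{M}\} \subset S_{d}(L,\alpha,R)$ with $\log M \asymp \log(1/\theta)$ satisfying pairwise diagram separation $d_{b}(\operatorname{dgm}(f_{i}), \operatorname{dgm}(f_{j})) \geq \eta$ for $i\neq j$ and a uniform KL bound $\max_{j}\operatorname{KL}(P_{f_{j}} \| P_{f_{0}}) \leq \tfrac{1}{4}\log M$.

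The construction I would pursue takes a base function $f_{0} \in S_{d}(L/2,\alpha,R)$ whose persistence diagram contains a collection of well-separated, spatially localized topological features indexed by cells of a regular grid on $[0,1]^{d}$. For each such cell $Q_{j}$ with center $x_{j}$, set $f_{j} = f_{0} \pm \eta\,\phi((\cdot - x_{j})/h)$, where $\phi$ is a fixed $(1,\alpha)$-Hölder bump profile supported in $B(0,1/2)$. Each $f_{j}$ again lies in $S_{d}(L,\alpha,R)$ provided $c_{0}$ is small enough, and the perturbation is designed so that it alters only the $j$-th feature of $\operatorname{dgm}(f_{0})$, shifting it by $\eta$ in one of the birth/death coordinates. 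As long as the base features are more than $\eta$-apart in the diagram plane, the optimal bottleneck matching between $\operatorname{dgm}(f_{j})$ and $\operatorname{dgm}(f_{k})$ preserves the index correspondence, yielding $d_{b}(\operatorname{dgm}(f_{j}),\operatorname{dgm}(f_{k})) = \eta$ for $j\neq k$.

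For the KL control, the Gaussian white-noise formula gives $\operatorname{KL}(P_{f_{j}} \| P_{f_{0}}) = \|f_{j}-f_{0}\|_{2}^{2}/(2\theta^{2}) \asymp c_{0}^{2}\,h^{d+2\alpha}/\theta^{2} \asymp c_{0}^{2}\log(1/\theta)$, using the calibration of $h$. Choosing $M$ as large as the geometric packing allows so that $\log M \asymp \log(1/\theta)$ and taking $c_{0}$ small enough, Birgé's inequality yields $\inf_{\hat{T}}\max_{j} P_{f_{j}}(\hat{T}\neq j) \geq \tfrac{1}{4}$. The test $\hat{T} = \arg\min_{j} d_{b}(\hat{D}, \operatorname{dgm}(f_{j}))$ coupled with the $\eta$-separation (via the triangle inequality, $d_{b}(\hat{D},\operatorname{dgm}(f_{j})) < \eta/2$ forces $\hat{T}=j$) and Markov's inequality then converts this testing lower bound into $\mathbb{E}_{f_{j}}[d_{b}(\hat{D}, \operatorname{dgm}(f_{j}))^{p}] \gtrsim \eta^{p}$, which is exactly $(\theta^{2}\log(1/\theta))^{p\alpha/(d+2\alpha)}$.

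The technical crux, and the step I expect to be the main obstacle, is the geometric construction of the base function $f_{0}$: packing enough pairwise $\eta$-separated features into a single $(L/2,\alpha)$-Hölder regular function while ensuring that each localized perturbation shifts only its intended feature of the diagram, with no spurious merges or cancellations. Hölder regularity couples the amplitude and the scale of the features, so care is needed (for instance by spreading features in the full birth-death plane or by using nested craters of well-chosen depths) to house $\asymp 1/h^{\min(d,2\alpha)}$ distinguishable features. Checking compatibility with Assumptions A1--A3---in particular the reach condition on the discontinuity set---and verifying that every $f_{j}$ still belongs to $S_{d}(L,\alpha,R)$ are the detailed bookkeeping that makes this construction delicate.
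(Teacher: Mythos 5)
Your overall framework---reduce estimation to finite-hypothesis testing, control divergence via Cameron--Martin, and pass from testing error to estimation error by Markov---is sound, and it matches the paper's strategy at that level (the paper invokes the $\chi^2$-form of Tsybakov's multi-hypothesis bound rather than Fano/Birg\'e with KL; that is a cosmetic difference). Where the two really part ways is the construction of the hypothesis family, and there your proposal has a genuine gap, one you yourself flag as the main obstacle: you never build the base function $f_0$ that is supposed to house $\asymp h^{-\min(d,2\alpha)}$ pairwise $\eta$-separated, topologically isolated features, nor do you verify that a bump of amplitude $\eta$ and scale $h$ shifts exactly one of them by $\eta$ without creating, merging, or destroying others. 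That construction is precisely the hard content of the lower bound; the Fano/KL bookkeeping surrounding it is routine.

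The paper sidesteps the packing problem entirely by not packing. It takes the monotone ramp $f_{0}(x)=\frac{L}{2}|x_{1}|^{\alpha}$, whose diagram is just $\{(0,+\infty)\}$, and defines one hypothesis per crater position: $f_{h,m}=f_{0}-L\bigl(h^{\alpha}-\|x-\frac{m}{\lfloor 1/h\rfloor}(1,\ldots,1)\|_{\infty}^{\alpha}\bigr)_{+}$ for integers $0<m<\lfloor 1/h\rfloor$. Each $f_{h,m}$ contributes exactly one finite diagram point, and its coordinates are pinned by the ramp value $\frac{L}{2}(m/\lfloor 1/h\rfloor)^{\alpha}$ at the crater center, so identical craters at different spatial positions land at different heights in the birth--death plane. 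Separation between hypotheses comes from the ramp, not from pre-existing topology of a multi-feature base, and there is nothing to pack inside a single function. Note also that your ``nested craters of well-chosen depths'' variant, taken literally inside one $f_{0}$, hits a range obstruction: $M\asymp 1/h$ craters at successive depths $\eta,2\eta,\ldots$ with $\eta\asymp h^{\alpha}$ force $f_{0}$ to span $\asymp M\eta\asymp h^{\alpha-1}$, which blows up for $\alpha<1$; the two-dimensional spread across the birth--death plane avoids that but still requires realizing the prescribed sublevel-set topology with an explicit H\"older function, which is the part you leave open. The paper's one-crater-per-hypothesis-on-a-ramp device collapses all of this into a two-line formula and an elementary $L^{2}$ divergence bound.
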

The proof follows a standard method to provide minimax lower bounds, as presented in Section 2 of \cite{TsybakovBook}. The idea is, for any $r_{n}=o\left(\left(\frac{\log(n)}{n}\right)^{\frac{\alpha}{d+2\alpha}}\right)$, to exhibit a finite collection of functions in $S_{d}(M,L,\alpha,R)$ such that their persistence diagrams are pairwise at distance $2r_{n}$ but indistinguishable, with high certainty.\vspace{0.25cm}
\begin{proof}
Let 
$$f_{0}(x_{1},...,x_{d})=\frac{\min(M,L)}{2\sqrt{d}}|x_{1}|^{\alpha}$$
and 
for $m$ integer in $\left[0, \lfloor1/h\rfloor\right]$,
$$f_{h,m}(x_{1},...,x_{d})=f_{0}-\frac{\min(L,M)}{\sqrt{d}}\left(h^{\alpha}-||(x_{1},...,x_{d})-m/\lfloor1/h\rfloor(1,...,1)||^{\alpha}_{\infty}\right)_{+}$$
$f_{0}$ and the $f_{h,m}$ are $(L,\alpha)-$Hölder-continuous and bounded by $M$. Thus, they belong to $S_{d}(M,L,\alpha,R)$ for all $R>0$.\\\\
We have $\operatorname{dgm}(f_{0})=\{(0,+\infty)\}$ and for all $0<m<\lfloor1/h\rfloor$, integer, $\operatorname{dgm}(f_{h,m})=$
$$\left\{(0,+\infty),\left(\frac{\min(L,M)}{2\sqrt{d}}\left(\frac{m}{\lfloor1/h\rfloor}\right)^{\alpha}-\frac{L}{\sqrt{d}}h^{\alpha},\frac{\min(L,M)}{2\sqrt{d}}\left(\frac{m}{\lfloor1/h\rfloor}\right)^{\alpha}-\frac{\min(L,M)}{2\sqrt{d}}h^{\alpha}\right)\right\}.$$
Thus, for all $0<m\ne m'<\lfloor1/h\rfloor$, integers,
$$d_{b}\left(\operatorname{dgm}(f_{0}),\operatorname{dgm}(f_{m,h})\right)\geq \frac{\min(L,M)h^{\alpha}}{2\sqrt{d}}\text{ and }d_{b}\left(\operatorname{dgm}(f_{m,h}),\operatorname{dgm}(f_{m'})\right)\geq  \frac{\min(L,M)h^{\alpha}}{2\sqrt{d}}.$$
We set $r_{n}=\frac{\min(L,M)h^{\alpha}}{4\sqrt{d}}$, then,
$$d_{b}\left(\operatorname{dgm}(f_{0}),\operatorname{dgm}(f_{d,h,m^{k'},\alpha})\right)\geq 2r_{n}\text{ and }d_{b}\left(\operatorname{dgm}(f_{h,m}),\operatorname{dgm}(f_{h,m'})\right)\geq 2r_{n}.$$
For a fixed signal $f$, denote $\mathbb{P}^{n}_{f}=\bigotimes^{n}\mathbb{P}_{f}$ the joint distribution of the observations $(X_1,...,X_n)$. From Theorem 2.5 of \cite{TsybakovBook}, it now suffices to show that if $r_{n}=o\left(\left(\sfrac{\log(n)}{n}\right)^{\frac{\alpha}{d+2\alpha}}\right)$, then,
\begin{equation}
\label{chi2}
  \frac{1}{\left(\left\lfloor \frac{1}{h}\right\rfloor-2\right)\log\left(\left\lfloor \frac{1}{h}\right\rfloor-2\right)}\sum\limits_{0<m<\lfloor1/h\rfloor}KL\left(\mathbb{P}^{n}_{f_{h,m}},\mathbb{P}^{n}_{f_{0}}\right)
\end{equation}
converges to zero when $n$ converges to infinity, where $KL$ is the Kullback-Leibler divergence between probability distributions. We denote $H_{m}$ the hypercube defined by $||(x_{1},...,x_{d})-m/\lfloor1/h\rfloor(1,...,1)||_{\infty}\leq h$
\begin{align}
\label{CameronMartin}
&KL\left(\mathbb{P}^{n}_{f_{h,m}},\mathbb{P}^{n}_{f_{0}}\right)\nonumber\\
&=\sum_{i=1}^{n}KL\left(\mathcal{N}(f_{h,m}(x_{i}),\sigma),\mathcal{N}\left(f_{0}(x_{i}),\sigma\right)\right)\nonumber\\
&=\sum_{i=1}^{n}\frac{(f_{h,m}(x_{i})-f_{0}(x_{i}))^{2}}{2\sigma^2}\nonumber\\
&=\sum_{x_i\in H_m}\frac{(f_{h,m}(x_{i})-f_{0}(x_{i}))^{2}}{2\sigma^2}\nonumber\\
&=\frac{\min(M,L)^2}{4d\sigma^2}\sum_{x_i\in H_m}\left(h^{\alpha}-||x_i-m/\lfloor1/h\rfloor(1,...,1)||^{\alpha}_{\infty}\right)_{+}^{2}\nonumber\\
&\leq \frac{\min(M,L)^2}{4d\sigma^2}|H_m\cap G_n|h^{2\alpha}\nonumber\\
&\lesssim nh^{2\alpha+d}\nonumber
\end{align}
 Hence, if $h=o\left(\left(\sfrac{\log(n)}{n}\right)^{\frac{1}{d+2\alpha}}\right)$, we have that (\ref{chi2}) converges to zero. Consequently, if $r_{n}=o\left(\left(\sfrac{\log(n)}{n}\right)^{\frac{\alpha}{d+2\alpha}}\right)$, then $h=o\left(\left(\sfrac{\log(n)}{n}\right)^{\frac{1}{d+2\alpha}}\right)$ and we get the conclusion. 
\end{proof}\vspace{0.25cm}

Following the remark made in the Introduction, it is worth noting that the previous proof establishes a stronger result. Throughout the proof, we have only considered \((L,\alpha)\)-Hölder continuous functions, implying that the lower bound we obtain also applies to the minimax risk over Hölder spaces. As a direct consequence, this formally confirms that the rates obtained in \cite{BCL2009} and \cite{Chung2009} are minimax.
\section{Numerical illustrations}
\label{sec: illustration}
The present work focuses mainly on addressing the statistical difficulty of estimating persistence diagrams for noisy signals. However, for practical applicability, the computational aspects of the proposed strategy should also be discussed. Although it is easy to implement, it suffers from computational limitations. Since the proposed estimator constructs persistence diagrams using cubical homology, for a window size \( h \), it requires, in the worst case, \( O(h^{-d}) \) memory to store the cubical complex associated with the sublevel filtration, and the computation of persistence diagrams runs in \( O(h^{-3d}) \). Following Theorem \ref{MainProp}, the \( h \) that leads to the minimax convergence rate is of order \( O((\ln(n)/n)^{1/(2\alpha+d)}) \), which then induces a complexity of \( O(n^{d/(2\alpha+d)}) = O(n) \) in memory and \( O(n^{3d/(2\alpha+d)}) = O(n^3) \) in computation, which can be prohibitive in many practical scenarios. Note that this is not specific to our method. Any standard plug-in estimator suffers from similar shortcomings. This is likely the main limitation preventing the broader use of persistence diagrams in signal processing. However, efficient computation of persistent homology for cubical complexes is an active area of research \citep[see e.g.,][]{Wagner2012,Jacquette15,Otter2017,DLOTKO2018,Kaji20}, and the proposed methods may benefit from future progress in this direction. Also note that in some cases, this complexity can be improved. For example, if one is only interested in 0th-order persistent homology (i.e., tracking the evolution of connected components), the complexity for computing the \( H_0 \)-persistence diagram reduces to
$O\bigl(h^{-d} \alpha(h^{-d})\bigr) = O\bigl(n \, \alpha(n)\bigr)$, where \( \alpha \) denotes the inverse of the Ackermann function \citep[see][Section VII.2]{Wagner2012}. Since \( \alpha \) grows extremely slowly, this implies that the algorithm runs essentially in linear time in this case.

Nonetheless, we believe that our strategy can still be valuable in certain practical scenarios, particularly in lower-dimensional settings. To illustrate this, we provide numerical examples in dimension two, situating ourselves in the common practical scenario of image analysis in the presence of additive noise. In this context, Assumptions \textbf{A0}-\textbf{A3} essentially allow us to consider images displaying objects with smooth boundaries, with no intersections between objects' boundaries or between objects and the boundary of the image. We consider the following two toy examples:
\begin{figure}[h]
\centering
\begin{subfigure}{.5\textwidth}
  \centering
  \includegraphics[width=1\linewidth]{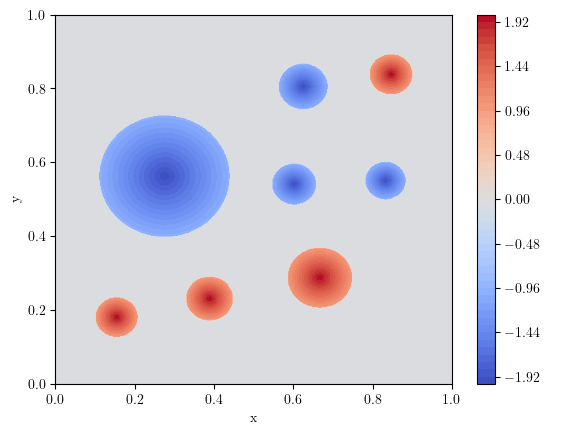}
  \caption{Graph of $f_{1}$}
  \label{fig:sub1}
\end{subfigure}%
\begin{subfigure}{.5\textwidth}
  \centering
  \includegraphics[width=1\linewidth]{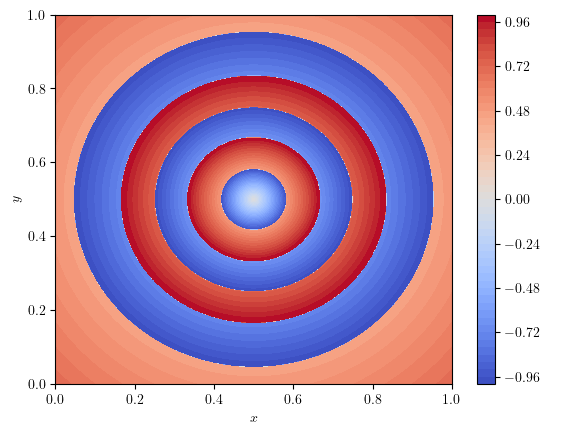}
  \caption{Graph of $g_1$}
  \label{fig:sub2}
\end{subfigure}
\caption{Graph of $f_{1}$ and $g_1$.}
\label{fig:test}
\end{figure}
\begin{itemize}
    \item First, we consider $k=8$ non-intersecting discs $D_1,...,D_k$, chosen randomly (random centers $(x_1,y_1),..(x_k,y_k)$ and random radii $r_1,...,r_k$) in $[0,1]^2$. We also ensure that $D_1,...,D_k$ do not intersect the boundary of $[0,1]^2$. We then consider the family of signal $f_\alpha:[0,1]^2\rightarrow\mathbb{R}$, $\alpha\in]0,1]$ defined by:
    $$f_{\alpha}(x)=\begin{cases}(-1)^{i}\left(2-\left(\frac{||x-x_i||_2}{r_i}\right)^{\alpha}\right)&\text{ if }x\in D_{i}, 1\leq i\leq k\\
    0&\text{ else}
    \end{cases}.$$
    These are simply sums of bump functions with exponent $\alpha$ on the discs $D_1,...,D_k$, shifted to create discontinuity at the boundaries of each $D_1,...,D_k$. Note that $f_\alpha$ belongs to $S_2(M,L,R,\alpha)$, for, $M=2$, $L=(1/\min_{1\leq i\leq k} r_i)^{\alpha}$ and some $R>0$ depending on $\min_{1\leq i\leq k} r_i$ and the minimal distance of the discs $D_1,...,D_k$ to the boundary of $[0,1]^2$. 
    \item Second, we choose $p=5$ discs $\tilde{D}_1\subset...\subset\tilde{D}_p$ centered at $(0,0)$ with raddii $1/2>\tilde{r}_5=1/2.2>\tilde{r}_4=1/3>\tilde{r}_3=1/4>\tilde{r}_2=1/6>\tilde{r}_1=1/12$ and consider the family of signal $g_{\alpha}:[0,1]^2\rightarrow\mathbb{R}$, $\alpha\in]0,1]$, defined by:
    $$g_{\alpha}(x)=\begin{cases}
    -\left(\frac{||x-x_i||_2}{\tilde{r}_1}\right)^{\alpha}&\text{ if }x\in \tilde{D}_{1}\\
    (-1)^{i}\left(\frac{||x-x_i||_2}{\tilde{r}_i}\right)^{\alpha}&\text{ if }x\in \tilde{D}_{i}\setminus \tilde{D}_{i-1}, 2\leq i\leq p\\
   ||x-x_i||_2^{\alpha} &\text{ else}
    \end{cases}.$$
In contrast to the previous example, where the objects in the image (represented by the disks \(D_1, \dots, D_k\)) are non-overlapping, now, we consider \(k\) overlapping objects (represented by the disks \(\tilde{D}_1, \dots, \tilde{D}_p\)) that are strictly contained within one another. This inclusion ensures that their boundaries do not intersect. As a result, it follows that \(g_{\alpha}\) belongs to \(S_2(M, L, R, \alpha)\), for $M=1$, \(L = \left(1/\min_{1 \leq i \leq k} \tilde{r}_i\right)^{\alpha}\), and some \(R > 0\) depending on \(\min(\tilde{r}_1, \tilde{r}_2 - \tilde{r}_1, \dots, \tilde{r}_p - \tilde{r}_{p-1})\) and the distance of the disks \(\tilde{D}_p\) to the boundary of \([0, 1]^2\).
\end{itemize}
For these two examples, we run simulations to illustrate that, although the estimation of the signals in the sup-norm is out of reach, the plug-in histogram estimators of their persistence diagrams are consistent. To achieve this, for varying resolutions of observations \(\{N \times N, N \in [10: 50: 510]\}\) (where $[N_1:q:N_2]$ denotes the set $\{N_1,N_1+q,N_1+2q,...,N_2\}$), we generate noisy versions of the images associated with \(f_{\alpha}\) and \(g_{\alpha}\) by adding independent Gaussian noise to each pixel, with a standard deviation of \(\sigma = 0.1\). We then estimate the signal via histograms. We choose the window size according to Theorem \ref{MainProp}, taking $h=\frac{1}{10}\times(\ln(n)/n)^{1/(2\alpha+d)}$, and compute the diagrams associated to its sublevel sets using the Python package \cite{gudhi:urm} via its function \textsc{CubicalComplex} \citep{gudhi:CubicalComplex}.\\\\
We repeat each simulation \(r = 100\) times, from which we compute the average errors for both the bottleneck distance on the persistence diagrams and the sup-norm loss on the signals for each resolution. The true persistence diagrams are provided explicitly, whereas the sup-norm error is approximated by looking at the maximal errors on a grid with significantly higher resolution of \(800 \times 800\). To examine the influence of regularity, we perform these simulations for several values of \(\alpha\): \(\alpha = 1\), \(\alpha = \frac{7}{8}\), \(\alpha = \frac{2}{3}\), and \(\alpha = \frac{1}{2}\).\\\\
The results are presented in Figure \ref{fig:simu f} for \(f_{\alpha}\) and Figure \ref{fig:simu g} for \(g_{\alpha}\). They clearly show that the estimation of the signal is not consistent. This is intuitive because, at the pixels overlapping on the boundaries of the disks, consistent estimation is infeasible due to the discontinuity. However, we observe that the persistence diagrams are well-estimated, with errors in the bottleneck distance relatively rapidly converging to 0. This illustrates the inefficiency of the sup-norm stability theorems in these contexts, highlighting the relevance of the analysis we conducted.

Additionally, we compute the time required for each persistence diagram computation and calculate the average time for each resolution and each \(\alpha\). The results are shown in Table \ref{table: simu time}. The observed computation times appear reasonable, even for the highest resolutions. For these simulations, we consider relatively high regularity parameters \(\alpha\), but we still observe that computation times grow rapidly as the regularity decreases. In cases where regularity is low and resolution is high, using the theoretically optimal window \(h\) may become impractical, leading to a tradeoff between statistical performance and computational cost.
\begin{figure}[h]
    \centering
    \includegraphics[scale=0.5]{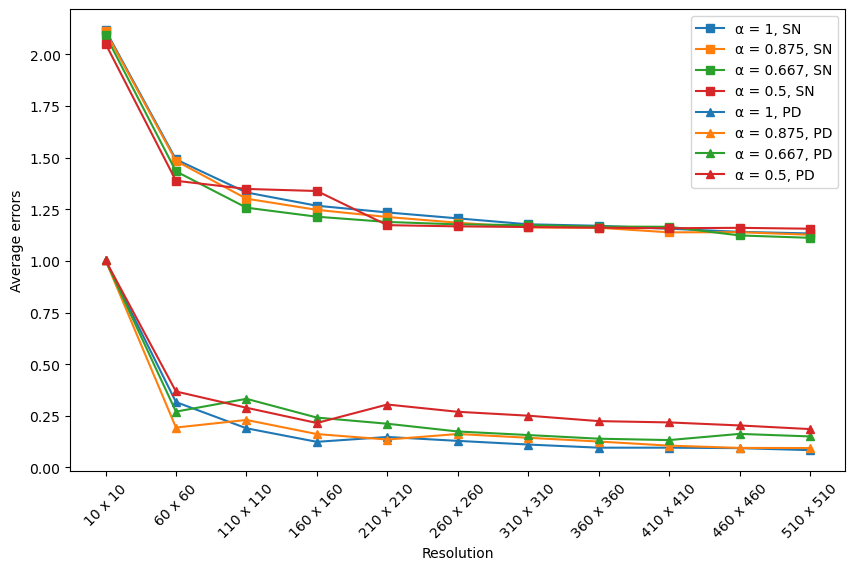}
    \caption{Simulation results for $f_{\alpha}$. The average bottleneck distance errors for diagram estimation (PD, marked with triangles) and average sup norm errors for signal estimation (SN, marked with squares) are displayed as functions of the observation resolution. The red curves correspond to $\alpha = 0.5$, the green to $\alpha = 2/3$, the orange to $\alpha = 7/8$, and the blue to $\alpha = 1$}
    \label{fig:simu f}
\end{figure}
\begin{figure}[h]
    \centering
    \includegraphics[scale=0.5]{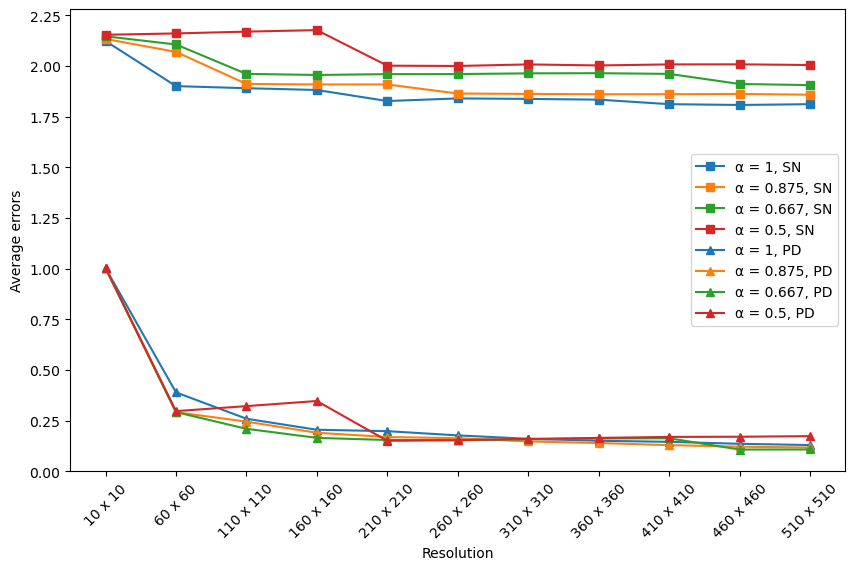}
    \caption{Simulation results for $g_{\alpha}$. The average bottleneck distance errors for diagram estimation (PD, marked with triangles) and average sup norm errors for signal estimation (SN, marked with squares) are displayed as functions of the observation resolution for $g_{\alpha}$. The red curves correspond to $\alpha = 0.5$, the green to $\alpha = 2/3$, the orange to $\alpha = 7/8$, and the blue to $\alpha = 1$.}
    \label{fig:simu g}
\end{figure}
\begin{table}[H]
\small
\begin{tabular}{cc|cccccc|}
\cline{3-8}
\multicolumn{1}{l}{}                                                              & \multicolumn{1}{l|}{}                  & \multicolumn{6}{c|}{\textbf{Resolution}}                                                                                                                                                                                                                                                                                                                                                 \\ \cline{3-8} 
\multicolumn{1}{l}{}                                                              & \multicolumn{1}{l|}{}                  & \multicolumn{1}{c|}{\textbf{$10\times10$}} & \multicolumn{1}{c|}{\textbf{$110\times110$}} & \multicolumn{1}{c|}{\textbf{$210\times210$}} & \multicolumn{1}{c|}{\textbf{$310\times310$}} & \multicolumn{1}{c|}{\textbf{$410\times410$}} & \textbf{$510\times510$} \\ \hline
\multicolumn{1}{|c|}{}                                    & \textbf{$1$}   & \multicolumn{1}{c|}{$9,62\times10^{-4}$}                           & \multicolumn{1}{c|}{$7,40\times10^{-2}$}                             & \multicolumn{1}{c|}{$2,06\times10^{-1}$}                             & \multicolumn{1}{c|}{$4,57\times10^{-1}$}                             & \multicolumn{1}{c|}{$8,18\times10^{-1}$}                             & $1,25$                                          \\ \cline{2-8} 
\multicolumn{1}{|c|}{}                                    & \textbf{$7/8$} & \multicolumn{1}{c|}{$1,06\times10^{-3}$}                           & \multicolumn{1}{c|}{$7,11\times10^{-2}$}                             & \multicolumn{1}{c|}{$2,69\times10^{-1}$}                             & \multicolumn{1}{c|}{$4,46\times10^{-1}$}                             & \multicolumn{1}{c|}{$9,88\times10^{-1}$}                             & $1,16$                                          \\ \cline{2-8} 
\multicolumn{1}{|c|}{}                                    & \textbf{$2/3$} & \multicolumn{1}{c|}{$9,71\times10^{-4}$}                           & \multicolumn{1}{c|}{$7,25\times10^{-2}$}                             & \multicolumn{1}{c|}{$2,76\times10^{-1}$}                             & \multicolumn{1}{c|}{$6,32\times10^{-1}$}                             & \multicolumn{1}{c|}{$1,47$}                                          & $1,71$                                          \\ \cline{2-8} 
\multicolumn{1}{|c|}{\multirow{-4}{*}{\textbf{$\alpha$}}} & \textbf{$1/2$} & \multicolumn{1}{c|}{$1,24\times 10^{-4}$}                          & \multicolumn{1}{c|}{$1,90\times10^{-1}$}                             & \multicolumn{1}{c|}{$2,90\times10^{-1}$}                             & \multicolumn{1}{c|}{$6,51\times10^{-1}$}                             & \multicolumn{1}{c|}{$1,49$}                                          & $2,44$                                          \\ \hline
\end{tabular}
\caption{Average computation time (in seconds) for computing the persistence diagram of $f_\alpha$ over $r = 100$ simulations, for varying resolutions and values of $\alpha$.}
\label{table: simu time}
\end{table}
\section{Discussion}
To date, the statistical analysis of persistence diagrams has largely depended on transferring established results from signal or density estimation, leveraging sup-norm stability. In contrast, the present work marks a departure from this paradigm. We introduce a finer statistical analysis of the plug-in histogram estimator, demonstrating that it achieves the minimax convergence rates over the classes \( S_d(M, L, \alpha, R) \), matching the known rates for Hölder-continuous signals. These functional classes encompass irregular signals that typically pose significant challenges for classical nonparametric methods. It is also worth noting that the statistical arguments used in this work are limited to a simple concentration bound (Lemma~\ref{lemma noise 2}), and can therefore be easily adapted to other standard models, such as the density estimation model or the Gaussian white noise model.

Beyond the theoretical results established in this work, our approach opens up a broader perspective on the inference of persistent homology. In particular, it reveals that the regularity assumptions traditionally imposed on the underlying signals can be meaningfully relaxed. This motivates further exploration of potential relaxations, especially of Assumption \textbf{A3}.

One promising direction involves controlling the \(\mu\)-reach, as introduced in \cite{mureach}, of the discontinuity set. Such an extension would significantly broaden the applicability of our results, enabling the treatment of signals with more complex discontinuity structures, such as sets containing multiple points (e.g., self-intersections) or corners. In the context of image analysis, as discussed in Section~\ref{sec: illustration}, this would allow us to consider objects with intersecting boundaries, boundaries intersecting the image frame, and objects with non-smooth boundaries.

However, as illustrated in Figure~\ref{mu reach problem}, the plug-in histogram estimator becomes inadequate in these more complex settings. Histograms may fail to capture accurately the topology of sublevel sets or may create false topological features due to multiple points and intersections with the boundary of the cube. We believe this limitation is not unique to histograms but likely extends to other standard plug-in estimators. Addressing these challenges may require moving beyond the plug-in framework, calling for the development of new inference techniques.
\begin{figure}
    \centering
    \subfloat[\centering A true cycle not captured by cubical approximation]{{\includegraphics[width=5.5cm]{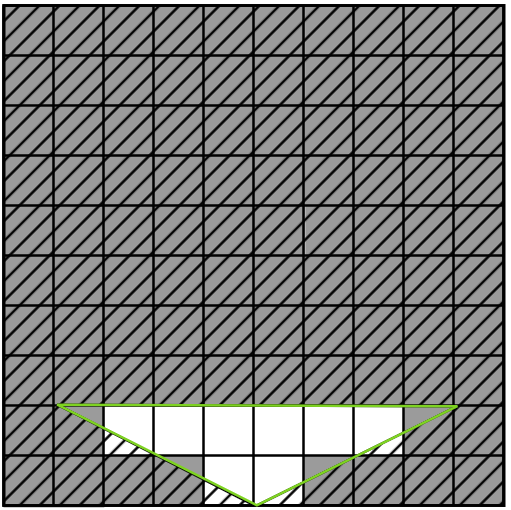} }}%
    \qquad
    \subfloat[\centering A false cycle created by cubical approximation]{{\includegraphics[width=5.5cm]{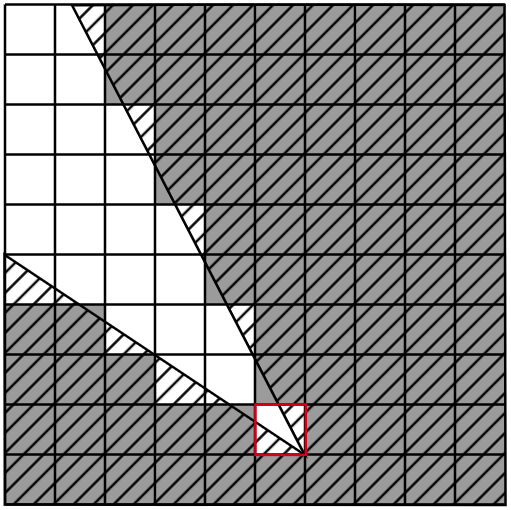} }}%
    \caption{$\lambda-$sublevel cubical approximation for $f$ the function defined as $0$ on the hatched area and $K$ outside (for arbitrarily large $K$). (a) displays a case where the histogram approximation fails to capture the true cycle in green, for at least all $0<\lambda<K/2$. (b) displays a case where the histogram approximation creates a cycle in red, not corresponding to any true cycle, with an arbitrarily long lifetime.  }%
    \label{mu reach problem}%
\end{figure}
\section*{Acknowledgements}
The author thanks anonymous reviewers for their helpful comments. The author also thanks Frédéric Chazal and Pascal Massart for our (many) helpful discussions. The author acknowledges the support of the ANR TopAI chair (ANR–19–CHIA–0001). 
\newpage
\appendix
\section{Proofs for $q-$tameness}
\label{q-tame appendix}
This section is devoted to proving the claim that the persistence diagrams we considered in this work are well-defined. Following the construction of \cite{chazal2013}, it suffices to prove that the associated persistence modules are $q-$tame.
\begin{defi}
\label{def: q-tame}
A persistence module $\mathbb{V}$ is said to be $q$-tame if for all $\lambda<\lambda^{\prime} \in \Lambda, \operatorname{rank}\left(v_\lambda^{\lambda^{\prime}}\right)$ is finite.
\end{defi}
\begin{lmm}
\label{verifyC2b}
    Let $f\in S_{d}(M,L,\alpha,R)$. For all $ s\in\mathbb{N}$ and $h< \frac{R}{2}$, for all $\lambda\in\mathbb{R}$, there exists a morphism $\phi_{\lambda}$ such that
    \begin{equation}
    \label{diagC2}
        \begin{tikzcd}
	{H_{s}\left(\mathcal{F}_{\lambda}\right)} && {H_{s}\left(\mathcal{F}_{\lambda+L(1+3^{\alpha})h^{\alpha}}\right)} \\
	& {H_{s}\left(\mathcal{F}_{\lambda}^{h}\right)}
	\arrow[from=1-1, to=1-3]
	\arrow[from=1-1, to=2-2]
	\arrow["{\phi_{\lambda}}"', from=2-2, to=1-3]
    \end{tikzcd}
    \end{equation}
    is a commutative diagram (unspecified maps come from set inclusions).
\end{lmm}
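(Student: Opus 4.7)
The plan is to build $\phi_\lambda$ as the three-step composition dictated by Proposition \ref{lemmaFiltEquiv}: an inclusion, the isomorphism coming from the deformation retract $F_{\lambda,h}$, and a second inclusion. Specifically, set
$$\phi_\lambda \;=\; (j_\lambda)_*\,\circ\, F^{*}_{\lambda,h}(\cdot,1)\,\circ\, (i_\lambda)_*,$$
where $i_\lambda:\mathcal{F}_\lambda^{h}\hookrightarrow \mathcal{K}_{\lambda,h}$ and $j_\lambda:\mathcal{G}_{\lambda,h}\hookrightarrow \mathcal{F}_{\lambda+L(1+3^\alpha)h^\alpha}$ are the inclusions furnished by Proposition \ref{lemmaFiltEquiv}, and $F^{*}_{\lambda,h}(\cdot,1):H_s(\mathcal{K}_{\lambda,h})\to H_s(\mathcal{G}_{\lambda,h})$ is the isomorphism induced by the deformation retract of $\mathcal{K}_{\lambda,h}$ onto $\mathcal{G}_{\lambda,h}$, which is well-defined since we assumed $h<R/2$.

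To check commutativity of diagram \ref{diagC2}, I would work at the chain level. Let $[C]\in H_s(\mathcal{F}_\lambda)$ be represented by a singular cycle $C\in C_s(\mathcal{F}_\lambda)$. Since Proposition \ref{lemmaFiltEquiv} also gives $\mathcal{F}_\lambda\subset \mathcal{G}_{\lambda,h}\subset \mathcal{K}_{\lambda,h}$ and $\mathcal{F}_\lambda\subset \mathcal{F}_\lambda^{h}\subset \mathcal{K}_{\lambda,h}$, we can regard $C$ as a chain in all of these spaces. Following $[C]$ around the diagram through $\phi_\lambda$ yields the class of $F^{\#}_{\lambda,h}(C,1)$ in $H_s(\mathcal{F}_{\lambda+L(1+3^\alpha)h^\alpha})$. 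On the other hand, the direct inclusion $\mathcal{F}_\lambda\hookrightarrow \mathcal{F}_{\lambda+L(1+3^\alpha)h^\alpha}$ sends $[C]$ to $[C]$. Thus it suffices to show that $F^{\#}_{\lambda,h}(C,1)=C$ as a chain, and for this we only need that $F_{\lambda,h}(x,1)=x$ for every $x\in \mathcal{F}_\lambda$. This holds because $\mathcal{F}_\lambda\subset \mathcal{G}_{\lambda,h}$ and $F_{\lambda,h}$ satisfies the retraction identity $F_{\lambda,h}(a,1)=a$ for all $a\in \mathcal{G}_{\lambda,h}$ (third bullet of the definition of deformation retraction). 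Extending linearly on singular simplices gives $F^{\#}_{\lambda,h}(C,1)=C$, so both compositions produce $[C]_{\mathcal{F}_{\lambda+L(1+3^\alpha)h^\alpha}}$ and the diagram commutes.

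The main obstacle, which Proposition \ref{lemmaFiltEquiv} has already disposed of, is the simultaneous control of the two inclusions $\mathcal{F}_\lambda^{h}\subset \mathcal{K}_{\lambda,h}\subset \mathcal{F}_\lambda^{2h}$ and $\mathcal{F}_\lambda\subset \mathcal{G}_{\lambda,h}\subset \mathcal{F}_{\lambda+L(1+3^\alpha)h^\alpha}$: the former supplies the target of $\phi_\lambda$, while the latter certifies that the deformation-retract image $\mathcal{G}_{\lambda,h}$ really lies in the enlarged sublevel set $\mathcal{F}_{\lambda+L(1+3^\alpha)h^\alpha}$. Once Proposition \ref{lemmaFiltEquiv} is invoked, the present lemma is a routine functorial assembly, and the argument works uniformly in $s\in\mathbb{N}$ because we have never used anything specific to degree $s$ beyond the functoriality of $H_s$.
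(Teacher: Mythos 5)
Your construction of $\phi_\lambda$ as the composition ``inclusion $\circ$ retract-induced isomorphism $\circ$ inclusion'' is exactly the one used in the paper, and both proofs ultimately rest on the same key observation: since $\mathcal{F}_\lambda\subset\mathcal{G}_{\lambda,h}$, the retraction $F_{\lambda,h}(\cdot,1)$ fixes $\mathcal{F}_\lambda$ pointwise, so the two paths around the diagram agree. The only cosmetic difference is that the paper phrases the commutativity check as a decomposition into three commuting faces, whereas you verify it directly at the chain level by showing $F^{\#}_{\lambda,h}(C,1)=C$; this makes explicit a step the paper's face-$(F2)$ argument leaves slightly implicit.
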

\begin{proof}
    Let $\Tilde{\phi}_{\lambda}:H_{s}\left(\mathcal{K}_{\lambda,h}\right)\rightarrow H_{s}\left(\mathcal{G}_{\lambda,h}\right)$ be the linear map induced by the deformation retraction from Lemma \ref{lemmaFiltEquiv}. We also denote $i_{1,\lambda}:H_{s}\left(\mathcal{F}_{\lambda}^{h}\right)\rightarrow H_{s}\left(\mathcal{K}_{\lambda,h}\right)$ the linear map induced by the inclusion $\mathcal{F}_{\lambda}^{h}\subset \mathcal{K}_{\lambda,h}$ and $i_{2,\lambda}:H_{s}\left(\mathcal{G}_{\lambda,h}\right)\rightarrow H_{s}\left(\mathcal{F}_{\lambda+L(1+3^{\alpha})h^{\alpha}}\right)$ the linear map induced by the inclusion $\mathcal{G}_{\lambda,h}\subset \mathcal{F}_{\lambda+L(1+3^{\alpha})h^{\alpha}}$, also provided by Lemma \ref{lemmaFiltEquiv}. We take $\phi_{\lambda}=i_{2,\lambda}\circ\Tilde{\phi}_{\lambda}\circ i_{1,\lambda}$. Diagram \ref{diagC2} then is (unspecified maps are induced by set inclusion),
    \begin{equation}
     \label{diagdecomp}
        \begin{tikzcd}
	{H_{s}\left(\mathcal{F}_{\lambda}\right)} &&& {H_{s}\left(\mathcal{F}_{\lambda+L(1+3^{\alpha})h^{\alpha}}\right)} \\
	{\quad\quad\quad (F1)} & {(F2)} & {\quad(F3)} \\
	{H_{s}\left(\mathcal{F}_{\lambda}^{h}\right)} & {H_{s}\left(\mathcal{K}_{\lambda,h}\right)} && {H_{s}\left(\mathcal{G}_{\lambda,h}\right)}
	\arrow[from=1-1, to=1-4]
	\arrow[from=1-1, to=3-1]
	\arrow["{i_{1,\lambda}}"', from=3-1, to=3-2]
	\arrow["{i_{2,\lambda}}"', from=3-4, to=1-4]
	\arrow[ from=3-2, to=3-4]
	\arrow["{\tilde{\phi}_\lambda}", from=3-4, to=3-2]
	\arrow[from=1-1, to=3-2]
	\arrow[from=1-1, to=3-4]
\end{tikzcd}
\end{equation}
Faces $(F1)$ and $(F3)$ simply commutes by inclusion. Face $(F2)$ commutes as $\Tilde{\phi}_{\lambda}$ is induced by a deformation retraction. All faces of Diagram \ref{diagdecomp} commute, hence Diagram \ref{diagdecomp} (and equivalently Diagram \ref{diagC2}) is commutative.
\end{proof}
\begin{prp}
\label{LmmQtame1}
Let $f\in S_{d}(M,L,\alpha,R)$ then $f$ is $q$-tame.
\end{prp}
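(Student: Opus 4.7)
The plan is to combine Lemma \ref{verifyC2b} with a finite cubical sandwich of the sublevel set $\mathcal{F}_\lambda$. Fix $s \in \mathbb{N}$ and $\lambda < \lambda'$; I need to show that $\operatorname{rank}(v_\lambda^{\lambda'}) < \infty$. First I would choose $h \in (0,R/2)$ small enough that $L(1+3^\alpha)h^\alpha < \lambda' - \lambda$, which is possible since the bound tends to $0$ with $h$. Lemma \ref{verifyC2b} then gives the commutative factorization
$$v_\lambda^{\lambda+L(1+3^\alpha)h^\alpha} \;=\; \phi_\lambda \circ i_\lambda,$$
where $i_\lambda : H_s(\mathcal{F}_\lambda) \to H_s(\mathcal{F}_\lambda^h)$ is inclusion-induced. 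Post-composing with $v_{\lambda+L(1+3^\alpha)h^\alpha}^{\lambda'}$ yields the factorization of $v_\lambda^{\lambda'}$ through $H_s(\mathcal{F}_\lambda^h)$, so it is enough to bound the rank of $i_\lambda$ by a finite quantity.

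The key step is to further factor $i_\lambda$ through a \emph{finite} CW complex, thereby avoiding the delicate question of whether $H_s(\mathcal{F}_\lambda^h)$ itself is finite-dimensional. Choose $h' > 0$ with $1/h'$ an integer and $\sqrt{d}\,h' < h$, and define
$$A_{h'}^\lambda \;=\; \bigcup\left\{ H \in C_{h'} : H \cap \mathcal{F}_\lambda \neq \emptyset \right\}.$$
Then $\mathcal{F}_\lambda \subset A_{h'}^\lambda \subset \mathcal{F}_\lambda^h$: the first inclusion holds because every point of $\mathcal{F}_\lambda$ lies in some $h'$-cube of $C_{h'}$ meeting $\mathcal{F}_\lambda$, and the second because such cubes have diameter $\sqrt{d}\,h' < h$. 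Since $C_{h'}$ is finite, $A_{h'}^\lambda$ is a finite cubical complex, hence a finite CW complex, so $\dim H_s(A_{h'}^\lambda) < \infty$. The inclusion $\mathcal{F}_\lambda \hookrightarrow \mathcal{F}_\lambda^h$ factors through $A_{h'}^\lambda$, giving
$$\operatorname{rank}(v_\lambda^{\lambda'}) \;\leq\; \operatorname{rank}(i_\lambda) \;\leq\; \dim H_s(A_{h'}^\lambda) \;<\; \infty,$$
which is precisely $q$-tameness of $\mathbb{V}_{f,s}$.

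There is no real obstacle here once Lemma \ref{verifyC2b} is in hand: the whole argument reduces to recognizing that while $\mathcal{F}_\lambda^h$ may in principle have complicated topology, one only needs to control the image of $i_\lambda$, and this image is supported by a finite cubical approximation nested between $\mathcal{F}_\lambda$ and $\mathcal{F}_\lambda^h$. The only mild subtlety to check is that $A_{h'}^\lambda$ is indeed well-defined as a subcomplex of $[0,1]^d$ (which follows from $\mathcal{F}_\lambda \subset [0,1]^d$), and that the factorization is valid for \emph{every} $s \in \mathbb{N}$, which is automatic since the argument is uniform in $s$.
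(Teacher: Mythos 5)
Your proof is correct and follows essentially the same strategy as the paper's: both invoke Lemma \ref{verifyC2b} to factor $v_\lambda^{\lambda'}$ through $H_s(\mathcal{F}_\lambda^h)$, and both then sandwich a finite CW complex between $\mathcal{F}_\lambda$ and $\mathcal{F}_\lambda^h$ to bound the rank (the paper takes a finite simplicial complex from a triangulation of $[0,1]^d$ using compactness of $\mathcal{F}_\lambda$; you take the finite cubical complex $A_{h'}^\lambda$ built from the grid $C_{h'}$, which is a slightly more explicit but equivalent construction). Your handling of a general pair $\lambda<\lambda'$ by shrinking $h$ directly, rather than the paper's use of transitivity of the $v_\lambda^{\lambda'}$, is a cosmetic variation.
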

\begin{proof}
 Let $s\in\mathbb{N}$ and $\mathbb{V}_{s,f}$ be the persistence module (for the $s-$th homology) associated to the sublevel sets filtration $\mathcal{F}$. For fixed levels $\lambda<\lambda'$ denote $v_{\lambda}^{\lambda'}$ the associated map. Let $\lambda\in\mathbb{R}$ and $h< \frac{R}{2}$. By Lemma \ref{verifyC2b},  $v_{\lambda}^{\lambda+L(1+3^{\alpha})h^{\alpha}}=\phi_{\lambda}\circ \Tilde{i}_{\lambda}$, with $\Tilde{i}_{\lambda}:H_{s}\left(\mathcal{F}_{\lambda}\right)\rightarrow H_{s}\left(\mathcal{F}_{\lambda}^{h}\right)$. By Assumptions \textbf{A1} and $\textbf{A2}$, $\mathcal{F}_{\lambda}$ is compact. As $[0,1]^{d}$ is triangulable, $\mathcal{F}_{\lambda}$ is covered by finitely many cells of the triangulation, there is a finite simplicial complex $K$ such that $\overline{\mathcal{F}_{\lambda}}\subset K \subset \mathcal{F}_{\lambda}^{h}$. Consequently, $\Tilde{i}_{\lambda}$ factors through the finite-dimensional space $H_{s}(K)$ and is then of finite rank. Thus, $v_{\lambda}^{\lambda+L(1+3^{\alpha})^{\alpha}h^{\alpha}}$ is of finite rank for all $0<h< \frac{R}{2}$. As for any $\lambda<\lambda'<\lambda^{''}$, $v_{\lambda}^{\lambda^{''}}=v_{\lambda'}^{\lambda^{''}}\circ v_{\lambda}^{\lambda'}$, we have that $v_{\lambda}^{\lambda'}$ is of finite rank for all $\lambda<\lambda'$. Hence, $f$ is $q$-tame. 
\end{proof}\vspace{0.25cm}
\begin{prp}
\label{qtamenessEstim}
Let $f\in S_{d}(M,L,\alpha,R)$ then, for all $s\in\mathbb{N}$, $\widehat{\mathbb{V}}^{h}_{s,f}$ is $q$-tame. 
\end{prp}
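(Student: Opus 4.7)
The plan is to observe that, unlike the true sublevel sets $\mathcal{F}_\lambda$ (whose $q$-tameness required the reach-based retraction argument of Lemma \ref{verifyC2b}), the estimated sublevel sets $\widehat{\mathcal{F}}_\lambda$ are combinatorially very simple: by construction, $\widehat{\mathcal{F}}_\lambda = \bigcup_{H \in C_{h,\lambda}} H$ is a finite union of closed hypercubes taken from the fixed finite collection $C_h$. Since $1/h$ is an integer, $|C_h| = (1/h)^d < \infty$, so $\widehat{\mathcal{F}}_\lambda$ is a finite regular CW-complex (equivalently, it admits a finite triangulation obtained from the standard simplicial subdivision of the unit grid).

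First, I would formalize this by noting that each closed hypercube $H \in C_h$ is a compact triangulable space, and $\widehat{\mathcal{F}}_\lambda$, being a finite union of such cubes glued along common faces of the grid, inherits a structure of a finite simplicial complex $K_\lambda$. Consequently, the chain complex of $K_\lambda$ is finitely generated in every degree, so $H_s(\widehat{\mathcal{F}}_\lambda)$ is a finite-dimensional vector space for every $s \in \mathbb{N}$.

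Then, for any $\lambda < \lambda'$, the inclusion-induced map $\hat{v}_\lambda^{\lambda'}: H_s(\widehat{\mathcal{F}}_\lambda) \to H_s(\widehat{\mathcal{F}}_{\lambda'})$ is a linear map between finite-dimensional spaces, so in particular
\[
\operatorname{rank}(\hat{v}_\lambda^{\lambda'}) \leq \dim H_s(\widehat{\mathcal{F}}_\lambda) < +\infty,
\]
which gives $q$-tameness of $\widehat{\mathbb{V}}^{h}_{s,f}$ for each $s \in \mathbb{N}$, and hence of the whole collection as required.

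There is no real obstacle here: the entire point is that the histogram estimator produces cubical complexes by design, so the $q$-tameness, which was nontrivial for $f$ itself (requiring Assumptions \textbf{A1}--\textbf{A3} and the deformation retraction of Proposition \ref{lemmaFiltEquiv}), reduces to the elementary finiteness of the grid. The only thing to be slightly careful about is to argue the triangulability (or CW-structure) of a finite union of axis-aligned closed hypercubes sharing full faces, which is standard; one may simply invoke that finite CW-complexes have finitely generated singular homology.
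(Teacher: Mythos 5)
Your proof is correct and follows essentially the same route as the paper's: both observe that $\widehat{\mathcal{F}}_\lambda$ is a finite union of grid hypercubes, hence has finite-dimensional singular homology in every degree, from which finite rank of the inclusion-induced maps (and thus $q$-tameness) is immediate. The only cosmetic difference is that you conclude by directly bounding the rank by the dimension of the source, whereas the paper invokes a theorem of Crawley--Boevey; your elementary step suffices.
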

\begin{proof}
Let $h>0$ and $\lambda\in \mathbb{R}$. $\widehat{\mathcal{F}}_{\lambda}$ is a union of hypercubes of the regular grid $G_{h}$, thus, $H_{s}\left(\widehat{\mathcal{F}}_{\lambda}\right)$ is finite dimensional. Thus, $\widehat{\mathbb{V}}_{s,f}$ is $q$-tame by Theorem 1.1 of \cite{Crawley2012}. 
\end{proof}
\section{Proofs of technical lemmas}
\label{technical appendix}
\subsection{Proof of Lemma \ref{interleavish 3}}
\label{Proof interleavish}
This section is dedicated to the proof of Lemma \ref{interleavish 3}, from Section \ref{Upperbounds section}, which relies on the following lemma.
\begin{lmm}
\label{lmm1}
Let $f:[0,1]^{d}\rightarrow \mathbb{R}$ and $h$ satisfying (\ref{calibration}). Let $H\subset \mathcal{F}_{\lambda+\sqrt{2}\sigma N_{h}h^{\alpha}}^{c}\cap C_{h}$ and $H'\subset\mathcal{F}_{\lambda-\sqrt{2}\sigma N_{h}h^{\alpha}}\cap C_{h}$.
We have that:
$$\frac{1}{nh^{d}}\sum\limits_{x_i\in H}X_{i}> \lambda\text{ and }\frac{1}{nh^{d}}\sum\limits_{x_i\in H'}X_{i}< \lambda.$$
\end{lmm}
\begin{proof}
Let us consider here the case where in $H'\subset\mathcal{F}_{\lambda-\sqrt{2}\sigma N_{h}h^{\alpha}}$ (the proof being the same in both cases). We have: 
\begin{align*}
&\frac{1}{nh^{d}}\sum\limits_{x_i\in H'}X_{i}\\
&=\frac{1}{nh^{d}}\sum\limits_{x_i\in H'}f(x_{i})+\sigma\varepsilon_{i}\\
&\leq \lambda-\sqrt{2}\sigma N_{h}h^{\alpha} + N_{h}\sqrt{2}\sigma \sqrt{\frac{\log\left(1/h^{d}\right)}{nh^{d}}}\\
&< \lambda
\end{align*}
By the choice made for $h$. 
\end{proof}\vspace{0.25cm}
\begin{proof}[Proof of Lemma \ref{interleavish 3}]
Let $x\in \mathcal{F}_{\lambda-\sqrt{2}\sigma N_{h}h^{\alpha}}^{-\sqrt{d}h}$ and $H$ be the hypercube of $C_{h}$ containing $x$. We then have:
$$H\subset \mathcal{F}_{\lambda-\sqrt{2}\sigma N_{h}h^{\alpha}}.$$
Hence, by Lemma \ref{lmm1}, $\sum_{x_i\in H}X_i-nh^{d}\lambda<0$, thus, $$H\subset \widehat{\mathcal{F}}_{\lambda}.$$
Now, let $x\in\left(\mathcal{F}_{\lambda+\sqrt{2}\sigma N_{h}h^{\alpha}}^{\sqrt{d}h}\right)^{c}$, and $H$ be the hypercube of $C_{h}$ containing $x$. We then have:
$$H\subset \mathcal{F}_{\lambda+\sqrt{2}\sigma N_{h}h^{\alpha}}^{c}.$$
Hence, by Lemma \ref{lmm1}, $\sum_{x_i\in H}X_i-nh^{d}\lambda>0$, thus,
$$H\subset \widehat{\mathcal{F}}_{\lambda}^{c}$$
and Lemma \ref{interleavish 3} is proved. 
\end{proof}
\subsection{Proof of Lemma \ref{lemmaFiltEquiv}}
\label{proof Filtequiv}
This section is devoted to the proof of Lemma \ref{lemmaFiltEquiv} from Section \ref{Upperbounds section}.\vspace{0.25cm}
\begin{proof}
First, note that if $x$ belongs to $\bigcup_{x\in S_{\lambda,h}}\left[x,\xi(x)\right]$ then $x$ is at distance at most $h$ from the union of $(\partial M_{i})_{i=1,...,l}$, and thus $||x-\xi(x)||_{2}\leq h$ which proves that $\mathcal{K}_{\lambda,h}\subset \mathcal{F}_{\lambda}^{2h}$.  As $\mathcal{F}_{\lambda}\subset\left( \bigcup_{x\in S_{\lambda,h}}\left[x,\xi(x)\right]\right)^{c}\cap \mathcal{F}_{\lambda}^{h}$, $\mathcal{F}_{\lambda}\subset\mathcal{G}_{\lambda,h}$.\\\\
Now, we prove that $F_{\lambda,h}$ is a deformation retraction. By definition of $\mathcal{G}_{\lambda,h}$:
$$F_{\lambda,h}(x,1)\in \mathcal{G}_{\lambda,h},\quad \forall x\in \mathcal{K}_{\lambda,h}$$
and by definition of $F_{\lambda,h}$:
$$F_{\lambda,h}(x,0)=x,\quad \forall x\in \mathcal{K}_{\lambda,h}$$
Let $x\in \mathcal{K}_{\lambda,h}$ satisfying (\ref{cond 1}) and (\ref{cond 2}), we have $F_{\lambda,h}(x,t)\in[x,\xi(x)]$ for all $t\in[0,1]$. In particular, this implies that $\xi(F_{\lambda,h}(x,1))=\xi(x)$ thus, by definition of $F_{\lambda,h}$, $F_{\lambda,h}(F_{\lambda,h}(x,1),1)=F_{\lambda,h}(x,1)$. Otherwise, $F_{\lambda,h}(x,1)=x$. Hence,
$$F_{\lambda,h}(x,1)=x,\quad \forall x\in \mathcal{G}_{\lambda,h}.$$
The proof for the continuity of $F_{\lambda,h}$ is provided separately in Appendix \ref{proof continuity F}. Then $F_{\lambda,h}$ is a deformation retraction of $\mathcal{K}_{\lambda,h}$ onto $\mathcal{G}_{\lambda,h}$. \\\\
Now, we prove that $\mathcal{G}_{\lambda,h}\subset \mathcal{F}_{\lambda+(1+3^{\alpha})Lh^{\alpha}}$. Let $x\in\mathcal{K}_{\lambda,h}$, and suppose that $x\in\overline{M}_{i}\cap[0,1]^{d}$:
\begin{itemize}
    \item If $x$ does not satisfy (\ref{cond 1}), $F_{\lambda,h}(x,1)=x$, then, directly, by definition of $S_{\lambda,h}$, $x$ belongs $\mathcal{F}_{\lambda+Lh^{\alpha}}$.
    \item If $x$ satisfies (\ref{cond 1}) and $2h-d_{2}\left(\xi(x),M_{i}\cap \mathcal{F}_{\lambda+Lh^{\alpha}}\right)\geq 0$, as $F_{\lambda,h}(x,1)\in[x,\xi(x)]\subset \overline{M}_{i}$, we have
$$d_{2}\left(F_{\lambda,h}(x,1),M_{i}\cap \mathcal{F}_{\lambda+Lh^{\alpha}}\right)\leq 3h.$$
Thus, Assumptions \textbf{A1} and \textbf{A2} then ensure that (details concerning this claim are provided in Lemma \ref{lmm: A1-A2 imply}, Appendix \ref{sec: A1-A2 imply}),
$$F_{\lambda,h}(x,1)\in\mathcal{F}_{\lambda+L(1+3^{\alpha})h^{\alpha}}.$$
\item If $x$ satisfies (\ref{cond 1}) and $2h-d_{2}\left(\xi(x),M_{i}\cap \mathcal{F}_{\lambda+Lh^{\alpha}}\right)<0$, then, $F_{\lambda,h}(x,1)=\xi(x)$. Let $\varepsilon>0$, there exist $j\in\{1,...,l\}$, $i\ne j$ and $y\in \mathcal{F}_{\lambda}\cap M_{j}$, such that $||x-y||_{2}\leq h+\varepsilon$. Hence, $\xi(x)\in \partial M_{j}\cap]0,1[^{d}$ and $||\xi(x)-y||_{2}\leq 2h+\varepsilon$. Assumptions \textbf{A1} and \textbf{A2} then ensure that (see again Lemma \ref{lmm: A1-A2 imply} in Appendix \ref{sec: A1-A2 imply}),
$$\xi(x)\in \mathcal{F}_{\lambda+L(1+(2+\varepsilon)^{\alpha})h^{\alpha}}$$
as it holds for all $\varepsilon>0$,
$$F_{\lambda,h}(x,1)=\xi(x)\in \mathcal{F}_{\lambda+L(1+2^{\alpha})h^{\alpha}}.$$
\end{itemize}
Finally, combining cases, $\mathcal{G}_{\lambda,h}\subset\mathcal{F}_{\lambda+L(1+3^{\alpha})h^{\alpha}} $.
\end{proof}
\subsection{Proof of the continuity of $F_{\lambda,h}$}
\label{proof continuity F}
This section is devoted to the proof of $F_{\lambda,h}$, claimed in the proof of Lemma \ref{lemmaFiltEquiv}.
\begin{lmm}
\label{continuity F}
Let $R/2>h>0$ and $\lambda\in\mathbb{R}$, $F_{\lambda,h}$ is continuous.
\end{lmm}
\begin{proof}
Let $\delta,\delta'>0$, $x,y\in \mathcal{K}_{\lambda,h}$ such that $||x-y||_{2}\leq \delta$ and $t,s\in[0,1]$ such that $|t-s|\leq \delta'$. Let's check the different cases.\\\\
We begin with the cases where $x\in \overline{M}_{i}$ and $y\in\overline{M}_{j}$, $i\ne j$. Let $\delta<h$, then $||x-\xi(x)||_{2}\leq \delta$, $||y-\xi(y)||_{2}\leq \delta$, and thus $F_{\lambda,h}(x,t),F_{\lambda,h}(y,s)\in [x,\xi(x)]\cup[y,\xi(y)]\subset B_{2}(x,2\delta)$.
Then,
\begin{align*}
 ||F_{\lambda,h}(x,t)-F_{\lambda,h}(y,s)||_{2}\leq 4\delta
\end{align*}
And the conclusion follows in this case. From now on, we suppose that $x,y\in M_{i}$.
\begin{itemize}
\item If $x$ does not satisfy (\ref{cond 1}) or (\ref{cond 2}) and $y$ does not satisfy (\ref{cond 1}) or (\ref{cond 2}), then, directly,
$$||F_{\lambda,h}(x,t)-F_{\lambda,h}(y,s)||_{2}=||x-y||_{2}\leq \delta.$$
\item If $x$ satisfies (\ref{cond 1}) and (\ref{cond 2}), and $y$ does not satisfy (\ref{cond 1}), then, $y\in\mathcal{F}_{\lambda+Lh^{\alpha}}$. Thus, 
$$d_{2}\left(\xi(x),M_{i}\cap \mathcal{F}_{\lambda+Lh^{\alpha}}\right)\leq ||x-\xi(x)||+||x-y||_{2}$$
and thus, by (\ref{cond 2}),
$$||x-\xi(x)||\geq 2h-d_{2}\left(\xi(x),M_{i}\cap \mathcal{F}_{\lambda+Lh^{\alpha}}\right)\geq ||x-\xi(x)||-\delta.$$
Consequently,
$$\left\|\xi(x)+\left(2h-d_{2}\left(\xi(x),M_{i}\cap \mathcal{F}_{\lambda+Lh^{\alpha}}\right)\right)_{+}\frac{x-\xi(x)}{||x-\xi(x)||_{2}}-x\right\|_{2}\leq \delta$$
Then,
\begin{align*}
&||F_{\lambda,h}(x,t)-F_{\lambda,h}(y,s)||_{2}\\
&=\left\|(1-t)x+t\left(\xi(x)+\left(2h-d_{2}\left(\xi(x),M_{i}\cap \mathcal{F}_{\lambda+Lh^{\alpha}}\right)\right)_{+}\frac{x-\xi(x)}{||x-\xi(x)||_{2}}\right)-y\right\|_{2}\\
&\leq ||x-y||_{2}\quad+\left\|\xi(x)+\left(2h-d_{2}\left(\xi(x),M_{i}\cap \mathcal{F}_{\lambda+Lh^{\alpha}}\right)\right)_{+}\frac{x-\xi(x)}{||x-\xi(x)||_{2}}-x\right\|_{2}\\
&\leq 2\delta.
\end{align*}
\item If $x$ satisfies (\ref{cond 1}) and (\ref{cond 2}), and $y$ satisfies (\ref{cond 1}) but not (\ref{cond 2}). Then,
\begin{align*}
||x-\xi(x)||_{2}&\geq 2h-d_{2}\left(\xi(x),M_{i}\cap \mathcal{F}_{\lambda+Lh^{\alpha}}\right)\\
&=2h-d_{2}\left(\xi(y),M_{i}\cap \mathcal{F}_{\lambda+Lh^{\alpha}}\right)\\
&\quad+d_{2}\left(\xi(y),M_{i}\cap \mathcal{F}_{\lambda+Lh^{\alpha}}\right)-d_{2}\left(\xi(x),M_{i}\cap \mathcal{F}_{\lambda+Lh^{\alpha}}\right)\\
&\geq 2h-d_{2}\left(\xi(y),M_{i}\cap \mathcal{F}_{\lambda+Lh^{\alpha}}\right) -||\xi(x)-\xi(y)||_{2}\\
&\geq  ||y-\xi(y)||_{2}-||\xi(x)-\xi(y)||_{2}\\
&\geq ||x-\xi(x)||_{2}-2||\xi(x)-\xi(y)||_{2}-||x-y||_{2}.
\end{align*}
Hence,
$$\left\|\xi(x)+\left(2h-d_{2}\left(\xi(x),M_{i}\cap \mathcal{F}_{\lambda+Lh^{\alpha}}\right)\right)_{+}\frac{x-\xi(x)}{||x-\xi(x)||_{2}}-x\right\|_{2}\leq ||x-y||_{2}+2||\xi(x)-\xi(y)||_{2}$$
And thus, we have:
\begin{align*}
&||F_{\lambda,h}(x,t)-F_{\lambda,h}(y,s)||_{2}\\
&=\left\|(1-t)x+t\left(\xi(x)+\left(2h-d_{2}\left(\xi(x),M_{i}\cap \mathcal{F}_{\lambda+Lh^{\alpha}}\right)\right)_{+}\frac{x-\xi(x)}{||x-\xi(x)||_{2}}\right)-y\right\|_{2}\\
&\leq ||x-y||_{2}\quad+\left\|\xi(x)+\left(2h-d_{2}\left(\xi(x),M_{i}\cap \mathcal{F}_{\lambda+Lh^{\alpha}}\right)\right)_{+}\frac{x-\xi(x)}{||x-\xi(x)||_{2}}-x\right\|_{2}\\
&\leq 2||x-y||_{2}+2||\xi(x)-\xi(y)||_{2}\\
&\leq 2\delta +2||\xi(x)-\xi(y)||_{2}
\end{align*}
and we conclude by continuity of $\xi$.
\item Finally, if $x$ and $y$ both satisfy (\ref{cond 1}) and (\ref{cond 2}), then,
\begin{align*}
&||F_{\lambda,h}(x,t)-F_{\lambda,h}(y,s)||_{2}\\
&=\left\|(1-t)x+t\left(\xi(x)+\left(2h-d_{2}\left(\xi(x),M_{i}\cap \mathcal{F}_{\lambda+Lh^{\alpha}}\right)\right)_{+}\frac{x-\xi(x)}{||x-\xi(x)||_{2}}\right)\right.\\
&\left.\quad-(1-s)y-s\left(\xi(y)+\left(2h-d_{2}\left(\xi(y),M_{i}\cap \mathcal{F}_{\lambda+Lh^{\alpha}}\right)\right)_{+}\frac{y-\xi(y)}{||y-\xi(y)||_{2}}\right)\right\|_{2}
\end{align*}
and the conclusion follows again, in this case by continuity of $\xi$.
\end{itemize}
All possible cases have been checked, the proof is complete. 
\end{proof}
\subsection{Proof of Lemma \ref{lemmaFiltEquiv2}}
\label{proof Filtequiv 2}
This section is dedicated to proof of Lemma \ref{lemmaFiltEquiv2} from Section \ref{Upperbounds section}.\vspace{0.25cm}
\begin{proof}[Proof of Lemma \ref{lemmaFiltEquiv2}]
First, we check that $\gamma_{h}$ extends continuously to $P_{\lambda,h}$. Let $x\in P_{\lambda,h}\cap \bigcup_{i=1}^{l}\partial M_{i}\cap]0,1[^{d}$. Assumption~\textbf{A3} ensures that \( P_{\lambda,h} \subset \, ]0,1[^{d} \), and that there exist \( i, j \in \{1, \dots, l\} \) such that \(B_2(x,h) \subset M_i \cup M_j \cup (\partial M_i \cap \partial M_j)\) (details concerning this claim are provided in Appendix \ref{sec: detail A3}).
Now, if
$$B_{2}(x,h)\cap \mathcal{F}_{\lambda}\cap M_{i}\ne \emptyset \text{ and } B_{2}(x,h)\cap \mathcal{F}_{\lambda}\cap M_{j}\ne \emptyset$$
then by Assumptions \textbf{A2} and \textbf{A1}, $B_{2}(x,h)\subset\mathcal{F}_{\lambda+L2h^{\alpha}}$ (see Lemma \ref{lmm: A1-A2 imply} in Appendix \ref{sec: A1-A2 imply}) and thus $x\in\mathcal{F}_{\lambda+L2h^{\alpha}}^{-h}$. Hence, $B_{2}(x,h)\cap P_{\lambda,h}\cap M_{j}= \emptyset$ or $B_{2}(x,h)\cap P_{\lambda,h}\cap M_{i}= \emptyset$. Without loss of generality, suppose that $B_{2}(x,h)\cap P_{\lambda,h}\cap M_{j}= \emptyset$. Assumption \textbf{A3} imposes that $\bigcup_{i=1}^{l}\partial M_{i}\cap]0,1[^{d}$ is a $C^{1,1}$ hypersurface and thus ensures that, for all $x\in P_{\lambda,h}\cap\overline{M}_{i}\cap]0,1[^{d}$, $\lim_{y\in M_{i}\cap P_{\lambda,h}\rightarrow x}\gamma_{h}(y)$ exists. We can then define:
$$\gamma_{h,\lambda}(x)=\lim_{y\in M_{i}\cap P_{\lambda,h} \rightarrow x}\gamma_{h}(y).$$
Note that this intuitively follows from the fact that the $C^{1,1}$ assumption ensures that the normal cone of $\bigcup_{i=1}^{l}\partial M_{i}\cap]0,1[^{d}$ at $\xi(x)$, which contains the set of points $\{y\in P_{\lambda,h},\xi(y)=\xi(x)\}$ by Theorem 4.8 of \cite{Fed59}, is simply a line. Thus, there is only one point at distance $h$ from $\xi(x)$ in $\{y\in \overline{M}_{i}\cap P_{\lambda,h},\xi(y)=\xi(x)\}$, namely $\gamma_{h,\lambda}(x)$. Doing so for all $x\in P_{\lambda,h}\cap \bigcup_{i=1}^{l}\partial M_{i}\cap]0,1[^{d}$ extends continuously $\gamma_{h}(x)$ to $P_{\lambda,h}$.\\\\
Now, we prove that $H_{\lambda,h}$ is a deformation retraction. As $\mathcal{F}_{\lambda}^{-h}\subset \left(\bigcup_{x\in P_{\lambda,h}}\left[x,\gamma_{h,\lambda}(x)\right]\right)^{c}\cap \mathcal{F}_{\lambda} $, $\mathcal{F}_{\lambda}^{-h}\subset\mathcal{M}_{\lambda,h}$. Note that, by definition of $\mathcal{M}_{\lambda,h}$,
$$H_{\lambda,h}(x,1)\in \mathcal{M}_{\lambda,h},\quad\forall x\in \mathcal{N}_{\lambda,h}$$
and by definition of $H_{\lambda,h}$
$$H_{\lambda,h}(x,0)=x,\quad\forall x\in \mathcal{N}_{\lambda,h}$$
Let $x\in \mathcal{N}_{\lambda,h}$ satisfying (\ref{cond 3}) and (\ref{cond 4}). By construction $H_{\lambda,h}(x,t)\in[x,\gamma_{h,\lambda}(x)]$ for all $t\in[0,1]$. In particular, this implies that $\gamma_{\lambda,x}(H_{\lambda,h}(x,1))=\gamma_{h,\lambda}(x)$. Thus, $H_{\lambda,h}(H_{\lambda,h}(x,1),1)=H_{\lambda,h}(x,1)$. In other cases $H_{\lambda,h}(x,1)=x$. Hence,
$$H_{\lambda,h}(x,1)=x,\quad \forall x\in \mathcal{M}_{\lambda,h}.$$ 
The proof of the continuity of $H_{\lambda,h}$ is provided separately in Appendix \ref{proof continuity H}. Then $H_{\lambda,h}$ is a deformation retraction of $\mathcal{N}_{\lambda,h}$ onto $\mathcal{M}_{\lambda,h}$.\\\\
Now, we prove that $\mathcal{M}_{\lambda,h}\subset \mathcal{F}_{\lambda+L(2+5^{\alpha})h^{\alpha}}^{-h}$. Let $x\in\mathcal{N}_{\lambda,h}$, and suppose that $x\in\overline{M}_{i}\cap[0,1]^{d}$:
\begin{itemize}
    \item If $x$ does not satisfy (\ref{cond 3}), directly, $F(1,x)=x\in \mathcal{F}_{\lambda+2Lh^{\alpha}}^{-h}$.
    \item If $x$ satisfies (\ref{cond 3}) and $3h-d_{2}\left(\gamma_{h,\lambda}(x),(\overline{M}_{i})^{c}\cap \mathcal{F}_{\lambda+2Lh^{\alpha}}\right)\geq 0$, then there exists $z\in M_{j}\cap \mathcal{F}_{\lambda+2Lh^{\alpha}}$ with $j\ne i$ such that $||x-z||_{2}\leq 3h$ thus $||H_{\lambda,h}(x,1)-z||_{2}\leq 4h$. By Assumption \textbf{A3}, $B_{2}(H_{\lambda,h}(x,1),h)\subset\overline{M_{i}}\cup \overline{M}_{j}$ (see Lemma \ref{lmm: A3 imply} in Appendix \ref{sec: detail A3}) and by Assumptions \textbf{A1} and \textbf{A2} (see Lemma \ref{lmm: A1-A2 imply} in Appendix \ref{sec: A1-A2 imply}):
    $$B_{2}(H_{\lambda,h}(x,1),h)\cap\overline{M}_{j}\subset B_{2}(z,5h)\cap\overline{M}_{j}\subset\mathcal{F}_{\lambda+L(2+5^{\alpha})h^{\alpha}}$$
    and
    $$B_{2}(H_{\lambda,h}(x,1),h)\cap\overline{M}_{i}\subset B_{2}(x,2h)\subset\mathcal{F}_{\lambda+L3^{\alpha}h^{\alpha}}.$$
    Thus, it follows that:
$$H_{\lambda,h}(x,1)\in \mathcal{F}^{-h}_{\lambda+L(2+5^{\alpha})h^{\alpha}}.$$
\item If $x$ satisfies (\ref{cond 3}) and $3h-d_{2}\left(\gamma_{h,\lambda}(x),(\overline{M}_{i})^{c}\cap \mathcal{F}_{\lambda+2Lh^{\alpha}}\right)<0$,
then $H_{\lambda,h}(x,1)=\gamma_{h,\lambda}(x)$ and thus $B_{2}(H_{\lambda,h}(x,1),h)\subset \overline{M}_{i}$. As $||x-\gamma_{h,\lambda}(x)||_{2}\leq h$, it follows, by Assumptions \textbf{A1} and \textbf{A2} (see again Lemma \ref{lmm: A1-A2 imply} in Appendix \ref{sec: A1-A2 imply}), that 
$$H_{\lambda,h}(x,1)\in \mathcal{F}_{\lambda+L3^{\alpha}h^{\alpha}}^{-h}.$$
From the same reasoning, it also follows that $[x,\gamma_{h,\lambda}(x)]\subset \mathcal{F}_{\lambda+Lh^{\alpha}}$ and hence $\mathcal{N}_{\lambda,h}\subset \mathcal{F}_{\lambda+Lh^{\alpha}}$.
\end{itemize}
Combining the different cases, it follows that $\mathcal{M}_{\lambda,h}\subset \mathcal{F}_{\lambda+L(2+5^{\alpha})h^{\alpha}}^{-h}$.
\end{proof}
\subsection{Proof of the continuity of $H_{\lambda,h}$}
\label{proof continuity H}
This section is devoted to the proof of the continuity of $H_{\lambda,h}$, claimed in the proof of Lemma \ref{lemmaFiltEquiv2}.
\begin{lmm}
\label{continuity H}
Let $R/2>h>0$ and $\lambda\in\mathbb{R}$, $H_{\lambda,h}$ is continuous.
\end{lmm}
\begin{proof}
Let $\delta,\delta'>0$, $x,y\in \mathcal{N}_{\lambda,h}$ such that $||x-y||_{2}\leq \delta$ and $t,s\in[0,1]$ such that $|t-s|\leq \delta'$. Let's check the different cases.
\begin{itemize}
    \item If $x\in \overline{M}_{i}$, $\gamma_{\lambda,h}(x)\in M_{i}$ and $\gamma_{\lambda,h}(y)\in \overline{M}^{c}_{i}$, then:
    $$d_{2}\left(\gamma_{h,\lambda}(x),(\overline{M}_{i})^{c}\cap \mathcal{F}_{\lambda+2Lh^{\alpha}}\right)\leq 2h+\delta.$$ 
    Thus, either $x$ satistifies (\ref{cond 3}) and (\ref{cond 4}) and we have :
    $$||\gamma_{h,\lambda}(x)-x||_{2}\geq 3h-d_{2}\left(\gamma_{h,\lambda}(x),(\overline{M}_{i})^{c}\cap \mathcal{F}_{\lambda+2Lh^{\alpha}}\right) \geq ||\gamma_{h,\lambda}(x)-x||_{2}-\delta$$
and consequently
$$\left\|\gamma_{h,\lambda}(x)+\left(3h-d_{2}\left(\gamma_{h,\lambda}(x),(\overline{M}_{i})^{c}\cap \mathcal{F}_{\lambda+2Lh^{\alpha}}\right)\right)_{+}\frac{x-\gamma_{h,\lambda}(x)}{||x-\gamma_{h,\lambda}(x)||_{2}}-x\right\|_{2}\leq \delta.$$
which gives $||H_{\lambda,h}(x,t)-x||_2\leq \delta$. Or,  $x$ does not satistify (\ref{cond 3}) or (\ref{cond 4}) and $H_{\lambda,h}(x,t)=x$. In both cases $||H_{\lambda,h}(x,t)-x||_2\leq \delta$. Similarly we can show that $||H_{\lambda,h}(y,s)-y||_2\leq \delta$. Hence, 
$$||H_{\lambda,h}(x,t)-H_{\lambda,h}(y,s)||_{2}\leq ||H_{\lambda,h}(x,t)-x||_2+||x-y||_{2}+||H_{\lambda,h}(y,s)-y||_2\leq 3\delta.$$
\end{itemize}
From now on, we can suppose that $x,y\in \overline{M}_{i}$ and $\gamma_{\lambda,h}(x),\gamma_{\lambda,h}(y)\in M_i$.
\begin{itemize}
\item If $x$ does not satisfy (\ref{cond 3}) or (\ref{cond 4}), and $y$ does not satisfy (\ref{cond 3}) or (\ref{cond 4}), then directly,
$$||H_{\lambda,h}(x,t)-H_{\lambda,h}(y,s)||_{2}=||x-y||_{2}\leq \delta.$$
\item If $x$ satisfies (\ref{cond 3}) and (\ref{cond 4}) and $y\notin ((\partial M_{i}\cap]0,1[^{d})^{h})^{\circ}$, then, $d_{2}\left(x,\partial M_{i}\cap]0,1[^{d}\right)\geq h-\delta$ and thus $||x-\gamma_{h,\lambda}(x)||_{2}\leq \delta$. As $H_{\lambda,h}(x,t)\in[x,\gamma_{h,\lambda}(x)]$, we have:
$$||H_{\lambda,h}(x,t)-H_{\lambda,h}(y,s)||_{2}=||H_{\lambda,h}(x,t)-y||_{2}\leq ||x-y||_{2}+||x-H_{\lambda,h}(x,t)||_{2}\leq  2\delta.$$
\item If $x$ satisfies (\ref{cond 3}) and (\ref{cond 4}) and $y\in \left(\bigcup\limits_{x\in P_{\lambda,h}}\left[x,\gamma_{h,\lambda}(x)\right]\right)^{c}\cap\left(\left(\partial M_{i}\cap]0,1[^{d}\right)^{h}\right)^{\circ}$. Then, $y\in\mathcal{F}_{\lambda+2Lh^{\alpha}}^{-h}$. Thus, for sufficiently small $\delta$ there exists $z\in (\overline{M}_{i})^{c}\cap B_{2}(y,h)$ and,
$$d_{2}\left(\gamma_{h,\lambda}(x),(\overline{M}_{i})^{c}\cap \mathcal{F}_{\lambda+2Lh^{\alpha}}\right)-2h\leq||\gamma_{h,\lambda}(x)-z||_{2}-2h\leq ||x-y||_{2}\leq \delta.$$
Hence, by (\ref{cond 4}),
$$||\gamma_{h,\lambda}(x)-x||_{2}\geq 3h-d_{2}\left(\gamma_{h,\lambda}(x),(\overline{M}_{i})^{c}\cap \mathcal{F}_{\lambda+2Lh^{\alpha}}\right) \geq ||\gamma_{h,\lambda}(x)-x||_{2}-\delta$$
and consequently,
$$\left\|\gamma_{h,\lambda}(x)+\left(3h-d_{2}\left(\gamma_{h,\lambda}(x),(\overline{M}_{i})^{c}\cap \mathcal{F}_{\lambda+2Lh^{\alpha}}\right)\right)_{+}\frac{x-\gamma_{h,\lambda}(x)}{||x-\gamma_{h,\lambda}(x)||_{2}}-x\right\|_{2}\leq \delta.$$
We then have:
\begin{align*}
&||H_{\lambda,h}(x,t)-H_{\lambda,h}(y,s)||_{2}\\
&=\left\|(1-t)x+t\left(\gamma_{h,\lambda}(x)+\left(3h-d_{2}\left(\gamma_{h,\lambda}(x),(\overline{M}_{i})^{c}\cap \mathcal{F}_{\lambda+2Lh^{\alpha}}\right)\right)_{+}\frac{x-\gamma_{h,\lambda}(x)}{||x-\gamma_{h,\lambda}(x)||_{2}}\right)-y\right\|_{2}\\
&\leq \left\|\gamma_{h,\lambda}(x)+\left(3h-d_{2}\left(\gamma_{h,\lambda}(x),(\overline{M}_{i})^{c}\cap \mathcal{F}_{\lambda+2Lh^{\alpha}}\right)\right)_{+}\frac{x-\gamma_{h,\lambda}(x)}{||x-\gamma_{h,\lambda}(x)||_{2}}-x\right\|_{2} + ||x-y||_{2}\\
&\leq 2\delta
\end{align*}
\item If $x$ satisfies (\ref{cond 3}) and (\ref{cond 4}) and $y$ satisfies (\ref{cond 3}) but not (\ref{cond 4}), then,
\begin{align*}
||\gamma_{h,\lambda}(x)-x||_{2}&\geq3h-d_{2}\left(\gamma_{h,\lambda}(x),(\overline{M}_{i})^{c}\cap \mathcal{F}_{\lambda+2Lh^{\alpha}}\right)\\
&\geq ||y-\gamma_{h,\lambda}(y)||_{2}-||\gamma_{h,\lambda}(x)-\gamma_{h,\lambda}(y)||_{2}\\
&\geq ||x-\gamma_{h,\lambda}(x)||_{2}-2||\gamma_{h,\lambda}(x)-\gamma_{h,\lambda}(y)||_{2}-||x-y||_{2}.
\end{align*}
Thus,
\begin{align*}
&||H_{\lambda,h}(x,t)-H_{\lambda,h}(y,s)||_{2}\\
&=\left\|(1-t)x+t\left(\gamma_{h,\lambda}(x)+\left(3h-d_{2}\left(\gamma_{h,\lambda}(x),(\overline{M}_{i})^{c}\cap \mathcal{F}_{\lambda+2Lh^{\alpha}}\right)\right)_{+}\frac{x-\gamma_{h,\lambda}(x)}{||x-\gamma_{h,\lambda}(x)||_{2}}\right)-y\right\|_{2}\\
&\leq \left\|\gamma_{h,\lambda}(x)+\left(3h-d_{2}\left(\gamma_{h,\lambda}(x),(\overline{M}_{i})^{c}\cap \mathcal{F}_{\lambda+2Lh^{\alpha}}\right)\right)_{+}\frac{x-\gamma_{h,\lambda}(x)}{||x-\gamma_{h,\lambda}(x)||_{2}}-x\right\|_{2} + ||x-y||_{2}\\
&\leq 2||\gamma_{h,\lambda}(x)-\gamma_{h,\lambda}(y)||_{2}+2||x-y||_{2}\\
&\leq 2\delta + 2||\gamma_{h,\lambda}(x)-\gamma_{h,\lambda}(y)||_{2}
\end{align*}
and we conclude, in this case, by continuity of $\gamma_{h,\lambda}$.
\item Finally, if $x$ satisfies (\ref{cond 3}) and (\ref{cond 4}) and $y$ satisfies (\ref{cond 3}) and (\ref{cond 4}), then,
\begin{align*}
&||H_{\lambda,h}(x,t)-H_{\lambda,h}(y,s)||_{2}\\
&=\left\|(1-t)x+t\left(\gamma_{h,\lambda}(x)+\left(3h-d_{2}\left(\gamma_{h,\lambda}(x),(\overline{M}_{i})^{c}\cap \mathcal{F}_{\lambda+2Lh^{\alpha}}\right)\right)_{+}\frac{x-\gamma_{h,\lambda}(x)}{||x-\gamma_{h,\lambda}(x)||_{2}}\right)\right.\\ 
&\quad\left.-(1-s)y-s\left(\gamma_{h,\lambda}(y)+\left(3h-d_{2}\left(\gamma_{h,\lambda}(y),(\overline{M}_{i})^{c}\cap \mathcal{F}^{-h}_{\lambda+Lh^{\alpha}}\right)\right)_{+}\frac{y-\gamma_{h,\lambda}(y)}{||y-\gamma_{h,\lambda}(y)||_{2}}\right)\right\|_{2}
\end{align*}
and again the conclusion, follows in this case, by continuity of $\gamma_{h,\lambda}$.
\end{itemize}
All possible cases have been checked, the proof is complete. 
\end{proof}
\subsection{Proof of Lemma \ref{lemma-histo 2}}
 \label{proof lemma-histo 2}
This section is devoted to the proof of Lemma \ref{lemma-histo 2} from Section \ref{Upperbounds section}. The arguments for Assertions \eqref{lemma-histo 2a} and \eqref{lemma-histo 2b} rely on the same underlying ideas:
\begin{itemize}
    \item First, due to Assumption \textbf{A3} (see details in Appendix \ref{sec: detail A3}), in a sufficiently small neighborhood \( V \) of any point \( x \) close to the boundary of the pieces of \( f \), the boundary locally corresponds to the frontier between two pieces \( M_i \) and \( M_j \).
    \item Secondly, by Assumption \textbf{A3} again, this frontier can be well approximated (up to a quadratic error) by a hyperplane (see Figure \ref{fig: reach ball}).
\end{itemize}
We formalize this intuition in Lemma~\ref{lemma-histo 1}. 
\begin{lmm}
\label{lemma-histo 1}
Let $0<K$ and $0<h<1$ such that $Kh<R/2$. There exists a constant $C_{2}$ (depending only on $K$, $d$ and $R$) such that for all $i,j\in\{1,...,l\}$ and $x\in B_{2}(\left(\partial M_{i}\cap]0,1[^{d}\right),Kh)\cap \overline{M}_{i}$ such that $\xi(x)\in \partial M_{j}$. We have:
\begin{equation}
\label{eq: haussdorf MI}
d_{2}\left(B_{2}\left(\xi(x),Kh\right)\cap M_{j},B_{2}\left(\xi(x),Kh\right)\cap\underline{P}(x)\right)\leq C_{2}h^{2}
\end{equation}
with
$$\underline{P}(x)=\left\{z\in[0,1]^{d}\text{ s.t. }\left\langle z,\frac{x-\xi(x)}{\left\|x-\xi(x)\right\|_{2}}\right\rangle\leq \left\langle \xi(x) ,\frac{x-\xi(x)}{\left\|x-\xi(x)\right\|_{2}}\right\rangle\right\}$$
and
\begin{equation}
\label{eq: haussdorf MI 2}
d_{2}\left(B_{2}\left(\xi(x),Kh\right)\cap M_{i},B_{2}\left(\xi(x),Kh\right)\cap\overline{P}(x)\right)\leq C_{2}h^{2}
\end{equation}
with
$$\overline{P}(x)=\left\{z\in[0,1]^{d}\text{ s.t. }\left\langle z,\frac{x-\xi(x)}{\left\|x-\xi(x)\right\|_{2}}\right\rangle\geq \left\langle \xi(x) ,\frac{x-\xi(x)}{\left\|x-\xi(x)\right\|_{2}}\right\rangle\right\}.$$
Furthermore if $z\in \underline{P}(x)\cap B_{2}\left(\xi(x),Kh\right)$ then
\begin{equation}
\label{eq: sphere translation}
z-C_{2}h^{2}\frac{x-\xi(x)}{||\xi(x)-x||_{2}}\in \overline{M}_{j}
\end{equation}
and if $z\in \overline{P}(x)\cap B_{2}\left(\xi(x),Kh\right)$
\begin{equation}
\label{eq: sphere translation 2}
z+C_{2}h^{2}\frac{x-\xi(x)}{||\xi(x)-x||_{2}}\in \overline{M}_{i}.
\end{equation}
\end{lmm}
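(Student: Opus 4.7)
The plan is to exploit the ball-rolling characterization of positive reach applied to the hypersurface $\Sigma = \bigcup_{i=1}^{l}\partial M_i\cap\,]0,1[^{d}$. Since $\|x-\xi(x)\|_2 \leq Kh < R/2 < R$, Federer's Theorem 4.8 guarantees that $\xi(x)$ is the unique nearest point on $\Sigma$ from $x$ and that $n = (x-\xi(x))/\|x-\xi(x)\|_2$ is a normal to $\Sigma$ at $\xi(x)$. The two open balls $B_\pm := B(\xi(x) \pm Rn, R)$ therefore do not meet $\Sigma$ in their interiors. Assumption \textbf{A3} (no multiple points of $\Sigma$, $C^{1,1}$ hypersurface) ensures that $\Sigma$ locally separates a neighborhood of $\xi(x)$ into exactly two pieces, and identifies $B_+ \subset \overline{M}_i$ (the tangent ball on the side of $x$) and $B_- \subset \overline{M}_j$.

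With these inclusions in hand, all four claims reduce to one quadratic identity: for any $w$ with $\|w-\xi(x)\|_2 \leq Kh$,
\[
\|w-(\xi(x)\pm Rn)\|_2^2 = \|w-\xi(x)\|_2^2 \mp 2R\langle w-\xi(x), n\rangle + R^2,
\]
so that $w \in B_\pm$ if and only if $\pm\langle w-\xi(x), n\rangle \geq \|w-\xi(x)\|_2^2/(2R)$. First I would apply this to $w \in M_j \cap B_2(\xi(x),Kh)$: since $M_j$ and $B_+ \subset \overline{M}_i$ have disjoint interiors, $w$ lies outside $B_+$, hence $\langle w-\xi(x), n\rangle \leq K^2h^2/(2R)$, placing $w$ within $K^2h^2/(2R)$ of $\underline{P}$. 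The symmetric argument with $M_i$, $B_-$ and $\overline{P}$ furnishes the corresponding bound needed for (\ref{eq: haussdorf MI 2}).

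For the sharper translation statements (\ref{eq: sphere translation})-(\ref{eq: sphere translation 2}), the plan is to exhibit an explicit shift of $z$ into one of the tangent balls. Starting from $z \in \underline{P}\cap B_2(\xi(x),Kh)$, I would translate $z$ by $C_2 h^2$ along the unit vector indicated in the lemma; substituting this translate into the quadratic identity, together with $\langle z-\xi(x),n\rangle \leq 0$ and $Kh < R/2$, reduces the inclusion in $B_-$ to an elementary scalar inequality of the form $K^2 + C_2^2 h^2 \leq 2RC_2$, which is satisfied for any $C_2$ slightly above $K^2/R$. Since $B_- \subset \overline{M}_j$, this establishes (\ref{eq: sphere translation}); the analogous shift handles (\ref{eq: sphere translation 2}). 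Combined with the previous paragraph, this yields the reverse Hausdorff bounds and closes (\ref{eq: haussdorf MI}) and (\ref{eq: haussdorf MI 2}).

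The main (mild) obstacle I foresee is the identification of which of the two tangent balls $B_\pm$ sits in $\overline{M}_i$ versus $\overline{M}_j$. This is where Assumption \textbf{A3} plays its structural role: the absence of multiple points on $\Sigma$ implies that the $C^{1,1}$ hypersurface locally partitions a tubular neighborhood of $\xi(x)$ into two connected components occupied by $M_i$ and $M_j$, and the side of $x \in \overline{M}_i$ is pinned down by the sign of $\langle\cdot - \xi(x), n\rangle$. Everything else is bookkeeping around the single quadratic identity above.
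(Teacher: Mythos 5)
Your proposal is correct and follows essentially the same route as the paper: both arguments hinge on the two tangent balls $B(\xi(x)\pm R n,R)$ (with $n=(x-\xi(x))/\|x-\xi(x)\|_2$) supplied by the reach condition and Assumption \textbf{A3}, which identify which ball lies in $\overline{M}_i$ and which in $\overline{M}_j$, and then bound the gap between each sphere and the hyperplane $P$. The only difference is bookkeeping: the paper reduces by a planar slice to a circle-versus-tangent-line computation and gets $R-\sqrt{R^2-(Kh)^2}\lesssim K^2h^2/R$, whereas you work directly with the quadratic identity $w\in B_\pm \Leftrightarrow \pm\langle w-\xi(x),n\rangle > \|w-\xi(x)\|_2^2/(2R)$, which is a slightly cleaner way to read off the same constant $C_2\sim K^2/R$ and to verify the translation statements in one line.
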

\begin{proof}Let $B_{1}$ be the Euclidean closed ball centered at $\xi(x)-R\frac{x-\xi(x)}{\left\|x-\xi(x)\right\|_{2}}$ of radius $R$ and $B_{2}$ be the Euclidean closed ball centered at $\xi(x)+R\frac{x-\xi(x)}{\left\|x-\xi(x)\right\|_{2}}$ of radius $R$. By Assumption \textbf{A3}, $B_{1}\subset \overline{M_{j}}$ and $B_{2}\subset \overline{M_{i}}$ (see details in Appendix \ref{sec: detail A3}). Then, the Hausdorff distance between $B_{2}\left(\xi(x),Kh\right)\cap M_{j}$ and $B_{2}\left(\xi(x),Kh\right)\cap\underline{P}(x)$, and between $B_{2}\left(\xi(x),Kh\right)\cap M_{i}$ and $B_{2}\left(\xi(x),Kh\right)\cap\overline{P}(x)$, are both upper bounded by the Hausdorff distance between $\partial B_{1} \cup \partial B_{2}$ intersected with $B_{2}\left(\xi(x),Kh\right)$ and the intersection with $B_{2}\left(\xi(x),Kh\right)$ of the hyperplane:
$$P(x)=\left\{z\in[0,1]^{d}\text{ s.t. }\left\langle z,\frac{x-\xi(x)}{\left\|x-\xi(x)\right\|_{2}}\right\rangle= \left\langle \xi(x) ,\frac{x-\xi(x)}{\left\|x-\xi(x)\right\|_{2}}\right\rangle\right\}.$$
By symmetry, this distance is equal to the Hausdorff distance between $\partial B_{1}\cap B_{2}\left(\xi(x),Kh\right)$ and $P(x)\cap B_{2}\left(\xi(x),Kh\right)$.
\begin{figure}[H]
    \centering
    \tikzset{every picture/.style={line width=0.75pt}} 

\begin{tikzpicture}[x=0.75pt,y=0.75pt,yscale=-1,xscale=1]

\draw    (347.7,107.05) -- (347.7,74.55) -- (347.7,26.1) ;
\draw [shift={(347.7,23.1)}, rotate = 90] [fill={rgb, 255:red, 0; green, 0; blue, 0 }  ][line width=0.08]  [draw opacity=0] (8.93,-4.29) -- (0,0) -- (8.93,4.29) -- cycle    ;
\draw [shift={(347.7,110.05)}, rotate = 270] [fill={rgb, 255:red, 0; green, 0; blue, 0 }  ][line width=0.08]  [draw opacity=0] (8.93,-4.29) -- (0,0) -- (8.93,4.29) -- cycle    ;
\draw [color={rgb, 255:red, 74; green, 144; blue, 226 }  ,draw opacity=1 ][line width=2.25]    (100.4,110.15) -- (574.2,110.15) ;
\draw   (100,2.6) -- (574.5,2.6) -- (574.5,278.6) -- (100,278.6) -- cycle ;
\draw  [dash pattern={on 4.5pt off 4.5pt}]  (100,169.5) .. controls (287,93.5) and (400,85.5) .. (574,169.5) ;
\draw  [draw opacity=0] (213.57,279.08) .. controls (210.43,267.82) and (208.75,255.98) .. (208.75,243.75) .. controls (208.75,169.66) and (270.31,109.6) .. (346.25,109.6) .. controls (422.19,109.6) and (483.75,169.66) .. (483.75,243.75) .. controls (483.75,255.22) and (482.28,266.35) .. (479.5,276.97) -- (346.25,243.75) -- cycle ; \draw  [color={rgb, 255:red, 208; green, 2; blue, 27 }  ,draw opacity=1 ] (213.57,279.08) .. controls (210.43,267.82) and (208.75,255.98) .. (208.75,243.75) .. controls (208.75,169.66) and (270.31,109.6) .. (346.25,109.6) .. controls (422.19,109.6) and (483.75,169.66) .. (483.75,243.75) .. controls (483.75,255.22) and (482.28,266.35) .. (479.5,276.97) ;  
\draw  [draw opacity=0] (480.77,2.78) .. controls (467.59,63.46) and (412.37,109) .. (346.25,109) .. controls (279.71,109) and (224.21,62.89) .. (211.49,1.63) -- (346.25,-25.15) -- cycle ; \draw  [color={rgb, 255:red, 208; green, 2; blue, 27 }  ,draw opacity=1 ] (480.77,2.78) .. controls (467.59,63.46) and (412.37,109) .. (346.25,109) .. controls (279.71,109) and (224.21,62.89) .. (211.49,1.63) ;  
\draw  [dash pattern={on 0.84pt off 2.51pt}] (260.75,110.05) .. controls (260.75,62.03) and (299.68,23.1) .. (347.7,23.1) .. controls (395.72,23.1) and (434.65,62.03) .. (434.65,110.05) .. controls (434.65,158.07) and (395.72,197) .. (347.7,197) .. controls (299.68,197) and (260.75,158.07) .. (260.75,110.05) -- cycle ;
\draw  [fill={rgb, 255:red, 155; green, 155; blue, 155 }  ,fill opacity=1 ] (344.25,110.05) .. controls (344.25,108.14) and (345.79,106.6) .. (347.7,106.6) .. controls (349.61,106.6) and (351.15,108.14) .. (351.15,110.05) .. controls (351.15,111.96) and (349.61,113.5) .. (347.7,113.5) .. controls (345.79,113.5) and (344.25,111.96) .. (344.25,110.05) -- cycle ;
\draw    (349.25,243.75) -- (481,243.7) ;
\draw [shift={(484,243.7)}, rotate = 179.98] [fill={rgb, 255:red, 0; green, 0; blue, 0 }  ][line width=0.08]  [draw opacity=0] (8.93,-4.29) -- (0,0) -- (8.93,4.29) -- cycle    ;
\draw [shift={(346.25,243.75)}, rotate = 359.98] [fill={rgb, 255:red, 0; green, 0; blue, 0 }  ][line width=0.08]  [draw opacity=0] (8.93,-4.29) -- (0,0) -- (8.93,4.29) -- cycle    ;
\draw  [fill={rgb, 255:red, 155; green, 155; blue, 155 }  ,fill opacity=1 ] (344.25,23.1) .. controls (344.25,21.19) and (345.79,19.65) .. (347.7,19.65) .. controls (349.61,19.65) and (351.15,21.19) .. (351.15,23.1) .. controls (351.15,25.01) and (349.61,26.55) .. (347.7,26.55) .. controls (345.79,26.55) and (344.25,25.01) .. (344.25,23.1) -- cycle ;
\draw    (264.85,133.5) -- (265.65,87.1) ;
\draw [shift={(265.7,84.1)}, rotate = 90.98] [fill={rgb, 255:red, 0; green, 0; blue, 0 }  ][line width=0.08]  [draw opacity=0] (8.93,-4.29) -- (0,0) -- (8.93,4.29) -- cycle    ;
\draw [shift={(264.8,136.5)}, rotate = 270.98] [fill={rgb, 255:red, 0; green, 0; blue, 0 }  ][line width=0.08]  [draw opacity=0] (8.93,-4.29) -- (0,0) -- (8.93,4.29) -- cycle    ;

\draw (111,20.4) node [anchor=north west][inner sep=0.75pt]    {$M_{i}$};
\draw (110,245.4) node [anchor=north west][inner sep=0.75pt]    {$M_{j}$};
\draw (411,226.4) node [anchor=north west][inner sep=0.75pt]    {$R$};
\draw (524,88.4) node [anchor=north west][inner sep=0.75pt]    {$\textcolor[rgb]{0.29,0.56,0.89}{P}$};
\draw (471,34.4) node [anchor=north west][inner sep=0.75pt]    {$\textcolor[rgb]{0.82,0.01,0.11}{B_{2}}$};
\draw (472,151.4) node [anchor=north west][inner sep=0.75pt]    {$\textcolor[rgb]{0.82,0.01,0.11}{B}\textcolor[rgb]{0.82,0.01,0.11}{_{1}}$};
\draw (352.33,59.07) node [anchor=north west][inner sep=0.75pt]    {$Kh$};
\draw (355.33,5.73) node [anchor=north west][inner sep=0.75pt]    {$x$};
\draw (349.7,116.9) node [anchor=north west][inner sep=0.75pt]    {$\xi ( x)$};
\draw (228,91) node [anchor=north west][inner sep=0.75pt]    {$2C_{2}h^{2}$};

\end{tikzpicture}
    \caption{The tangent balls $B_{1}$ and $B_{2}$ of radius $R$ delimit the region where $\partial M_{i}\cap \partial M_{j}$ lies in. 2D illustration.}
    \label{fig: reach ball}
\end{figure}
Now, let $z\in\partial B_{1}\setminus\{\xi(x)\}$, and $p(z)$ be its projection on $P(x)$. Let $Q$ be the plane containing $z$, $p(z)$ and $\xi(x)$, $Q$ intersects $\partial B_{1}$ in a circle $\mathcal{C}$ of radius $R$ and intersects $P$ in a line $D$ tangent to $\mathcal{C}$. The problem then reduces to upper-bounding the distance between a circle and a tangent line around the intersection point. Without loss of generality, we can suppose that we are in $\mathbb{R}^{2}$, $\mathcal{C}$ is the circle of radius $R$ centered at $(0,R)$ and $D$ the line $y=0$ (tangent to $\mathcal{C}$ at $(0,0)$). In $B((0,0),Kh)$, as $Kh<R/2$, $\mathcal{C}$ can be described as,
$$\mathcal{C}=\left\{(x,y)\in B((0,0),Kh)\text{ s.t. }y=R-\sqrt{R^{2}-x^{2}}\right\}.$$
Hence the distance between $\mathcal{C}$ and $D$ in $B((0,0),Kh)$ is upper bounded by:
$$R-\sqrt{R^{2}-(Kh)^{2}}=\frac{K^{2}}{2R}h^{2}+O(h^{3}).$$
Assertions (\ref{eq: haussdorf MI}) and (\ref{eq: haussdorf MI 2}) then follow. Now, simply observe that since $Kh<R/2$, for all $z\in \underline{P}(x)$, $z-(R-\sqrt{R^{2}-(Ch)^{2}})(x-\xi(x))/||x-\xi(x)||_{2}\in B_{1}$ and (\ref{eq: sphere translation}) follows. And symmetrically, for all $z\in \overline{P}(x)$, $z+(R-\sqrt{R^{2}+(Ch)^{2}})(x-\xi(x))/||x-\xi(x)||_{2}\in B_{2}$ and (\ref{eq: sphere translation 2}) follows. 
\end{proof}\vspace{0.25cm}
Before turning to the full proof of Lemma~\ref{lemma-histo 2}, we give a high-level outline of the argument. The idea being essentially the same for both \eqref{lemma-histo 2a} and \eqref{lemma-histo 2b}, we present the outline for proving \eqref{lemma-histo 2a}. Let $H_1\in C_{\lambda,h}$ be a hypercube of center $x_1$ and $x\in H_1\cap P_{\lambda+\sqrt{2}\sigma N_{h},\sqrt{d}h}$:
\begin{itemize}
    \item If the entire segment \([x, \xi(x)]\) is contained in \( H_1 \), then we directly have \([x, \xi(x)] \subset \widehat{\mathcal{F}}_{\lambda} \).
    
    \item Otherwise, let \( y \) be the first point along the segment \([x, \xi(x)]\) where it exits \( H_1\). Then \( y \) lies in a hypercube \( H_2 \in C_h \) of center $x_2$ and adjacent to \( H_1 \) (in the sense that they share a common boundary point). Moving along the line segment $[x, \xi(x)]$, we go from a piece \( M_i \), where $f$ (around $x$) is significantly greater than $\lambda$ to the boundaries of another piece \( M_j \), where the signal $f$ (around $\xi(x)$) is smaller than $\lambda+\varepsilon$, with \( \varepsilon \asymp h^\alpha \). Exploiting Lemma~\ref{lemma-histo 1}, we show that the translation of the set \( H_1\cap\overline{M}_{j}\cap G_{1/N}\) by $x_2-x_1$ (which is a subset of $H_2$), remains within $\overline{M}_{j}\cap G_{1/N}$, except for a subset of $H_1\cap G_{1/N}$ that has cardinality of order $O(nh^{d+1})$. Using this and Assumptions \textbf{A0}-\textbf{A2}, we show that \( H' \in \widehat{\mathcal{F}}_{\lambda + \varepsilon} \) for some \( \varepsilon \asymp h^\alpha \).
    
    \item We then repeat the same argument starting from \( y \). Iterating this reasoning along the segment \([x, \xi(x)]\), we establish assertion \eqref{lemma-histo 2a}.
\end{itemize}

\begin{proof}[Proof of Lemma \ref{lemma-histo 2}]
We start by proving Assertion (\ref{lemma-histo 2a}). Let $x\in \overline{M}_{i}\cap\widehat{\mathcal{F}}_{\lambda}\cap S_{\lambda+\sqrt{2}\sigma N_{h}h^{\alpha},\sqrt{d}h}$ and assume that $\xi(x)\in \partial M_{j}$. In particular, we have $||x-\xi(x)||\leq \sqrt{d}h$. Assume that $n$ is sufficiently large such that $3\sqrt{d}h<R/2$. By Assumption \textbf{A3} (see Lemma \ref{lmm: A3 imply} in Appendix \ref{sec: detail A3}), we have:
$$([x,\xi(x)])^{\sqrt{d}h}\subset \overline{M_{i}}\cup \overline{M}_{j}.$$
Furthermore, as $x\in S_{\lambda+\sqrt{2}\sigma N_{h}h^{\alpha},\sqrt{d}h}$, $$x\in\overline{\left(M_j\cap\mathcal{F}_{\lambda+\sqrt{2}\sigma N_{h}h^{\alpha}}\right)^{\sqrt{d}h}}.$$
Let $H_{1}\in C_{h,\lambda}$ be a hypercube containing $x$ and denote $x_{1}$ its center. Suppose that there exists $y\in \left[x,\xi(x)\right]$ such that $y\notin H_{1}$. Denote $H_{2}$ the hypercube of $C_{h}$ containing $y$ and $x_{2}$ its center. Suppose furthermore that $H_{2}$ is adjacent to $H_{1}$ (i.e., $H_{1}\cap H_{2}\ne \emptyset$). If,
$$H_{1}\cap M_{i}\cap \mathcal{F}_{\lambda+(\sqrt{2}\sigma N_{h}+3L\sqrt{d})h^{\alpha}}\ne\emptyset$$
then, as $H_{2}\subset H_1^{\sqrt{d}h}$, $H_{2}\subset \mathcal{F}_{\lambda+(\sqrt{2}\sigma N_{h}+5L\sqrt{d})h^{\alpha}}$ by Assumptions \textbf{A1} and \textbf{A2} (see Lemma \ref{lmm: A1-A2 imply} in Appendix \ref{sec: A1-A2 imply}), and thus, by Lemma \ref{lmm1}, $H_{2}\subset \widehat{\mathcal{F}}_{\lambda+(2\sqrt{2}\sigma N_{h}+5\sqrt{d})h^{\alpha}}$.
\begin{figure}[h]
    \centering
\begin{tikzpicture}[x=0.75pt,y=0.75pt,yscale=-1,xscale=1]

\draw [color={rgb, 255:red, 74; green, 144; blue, 226 }  ,draw opacity=1 ][line width=1.5]    (201.5,119) -- (408.5,466) ;
\draw   (458.32,339.64) -- (586.05,339.64) -- (586.05,467.37) -- (458.32,467.37) -- cycle ;
\draw   (330.83,339.41) -- (458.56,339.41) -- (458.56,467.13) -- (330.83,467.13) -- cycle ;
\draw   (203.11,339.41) -- (330.83,339.41) -- (330.83,467.13) -- (203.11,467.13) -- cycle ;
\draw   (202.87,211.92) -- (330.6,211.92) -- (330.6,339.64) -- (202.87,339.64) -- cycle ;
\draw   (457.85,84.67) -- (585.57,84.67) -- (585.57,212.39) -- (457.85,212.39) -- cycle ;
\draw   (202.87,84.67) -- (330.6,84.67) -- (330.6,212.39) -- (202.87,212.39) -- cycle ;
\draw    (442.88,315.13) -- (356.08,370.76) ;
\draw [shift={(354.4,371.84)}, rotate = 327.34] [color={rgb, 255:red, 0; green, 0; blue, 0 }  ][line width=0.75]    (10.93,-3.29) .. controls (6.95,-1.4) and (3.31,-0.3) .. (0,0) .. controls (3.31,0.3) and (6.95,1.4) .. (10.93,3.29)   ;
\draw  [fill={rgb, 255:red, 208; green, 2; blue, 27 }  ,fill opacity=1 ] (372.94,359.14) .. controls (372.94,356.81) and (374.95,354.91) .. (377.43,354.91) .. controls (379.9,354.91) and (381.91,356.81) .. (381.91,359.14) .. controls (381.91,361.48) and (379.9,363.37) .. (377.43,363.37) .. controls (374.95,363.37) and (372.94,361.48) .. (372.94,359.14) -- cycle ;
\draw  [dash pattern={on 4.5pt off 4.5pt}]  (202.87,84.67) .. controls (281.5,129) and (320.5,181) .. (331.5,249) .. controls (342.5,317) and (333.31,389.38) .. (402.12,419.31) .. controls (470.92,449.24) and (560.71,438.04) .. (586.05,467.37) ;
\draw  [fill={rgb, 255:red, 155; green, 155; blue, 155 }  ,fill opacity=1 ] (438.4,315.13) .. controls (438.4,312.79) and (440.41,310.9) .. (442.88,310.9) .. controls (445.36,310.9) and (447.37,312.79) .. (447.37,315.13) .. controls (447.37,317.46) and (445.36,319.35) .. (442.88,319.35) .. controls (440.41,319.35) and (438.4,317.46) .. (438.4,315.13) -- cycle ;
\draw  [fill={rgb, 255:red, 155; green, 155; blue, 155 }  ,fill opacity=1 ] (349.91,371.84) .. controls (349.91,369.5) and (351.92,367.61) .. (354.4,367.61) .. controls (356.88,367.61) and (358.89,369.5) .. (358.89,371.84) .. controls (358.89,374.17) and (356.88,376.07) .. (354.4,376.07) .. controls (351.92,376.07) and (349.91,374.17) .. (349.91,371.84) -- cycle ;
\draw  [fill={rgb, 255:red, 74; green, 144; blue, 226 }  ,fill opacity=1 ] (389.25,278.63) .. controls (389.25,276.29) and (391.26,274.4) .. (393.73,274.4) .. controls (396.21,274.4) and (398.22,276.29) .. (398.22,278.63) .. controls (398.22,280.96) and (396.21,282.86) .. (393.73,282.86) .. controls (391.26,282.86) and (389.25,280.96) .. (389.25,278.63) -- cycle ;
\draw  [fill={rgb, 255:red, 74; green, 144; blue, 226 }  ,fill opacity=1 ] (389.21,406.97) .. controls (389.21,404.63) and (391.22,402.74) .. (393.69,402.74) .. controls (396.17,402.74) and (398.18,404.63) .. (398.18,406.97) .. controls (398.18,409.3) and (396.17,411.2) .. (393.69,411.2) .. controls (391.22,411.2) and (389.21,409.3) .. (389.21,406.97) -- cycle ;
\draw    (393.31,279.13) -- (393.69,404.97) ;
\draw [shift={(393.69,406.97)}, rotate = 269.83] [color={rgb, 255:red, 0; green, 0; blue, 0 }  ][line width=0.75]    (10.93,-3.29) .. controls (6.95,-1.4) and (3.31,-0.3) .. (0,0) .. controls (3.31,0.3) and (6.95,1.4) .. (10.93,3.29)   ;
\draw   (330.6,84.67) -- (458.32,84.67) -- (458.32,212.39) -- (330.6,212.39) -- cycle ;
\draw   (458.32,212.39) -- (586.05,212.39) -- (586.05,340.12) -- (458.32,340.12) -- cycle ;
\draw   (330.6,211.92) -- (458.32,211.92) -- (458.32,339.64) -- (330.6,339.64) -- cycle ;

\draw (442.38,289.48) node [anchor=north west][inner sep=0.75pt]  [color={rgb, 255:red, 155; green, 155; blue, 155 }  ,opacity=1 ]  {$x$};
\draw (318.56,359.62) node [anchor=north west][inner sep=0.75pt]  [color={rgb, 255:red, 155; green, 155; blue, 155 }  ,opacity=1 ]  {$\xi ( x)$};
\draw (370.29,365.16) node [anchor=north west][inner sep=0.75pt]    {$\textcolor[rgb]{0.82,0.01,0.11}{y}$};
\draw (398.65,259.43) node [anchor=north west][inner sep=0.75pt]    {$\textcolor[rgb]{0.29,0.56,0.89}{x}\textcolor[rgb]{0.29,0.56,0.89}{_{1}}$};
\draw (403.25,380.05) node [anchor=north west][inner sep=0.75pt]    {$\textcolor[rgb]{0.29,0.56,0.89}{x}\textcolor[rgb]{0.29,0.56,0.89}{_{2}}$};
\draw (515,108.54) node [anchor=north west][inner sep=0.75pt]    {$M_{i}$};
\draw (231,414.54) node [anchor=north west][inner sep=0.75pt]    {$M_{j}$};
\draw (435,218.4) node [anchor=north west][inner sep=0.75pt]    {$H_{1}$};
\draw (435,344.4) node [anchor=north west][inner sep=0.75pt]    {$H_{2}$};
\draw (251,236.4) node [anchor=north west][inner sep=0.75pt]    {$\textcolor[rgb]{0.29,0.56,0.89}{P}$};

\end{tikzpicture}

    \caption{A 2D scenario where $\xi(x)$ is not in $H_{1}$.}
    \label{fig:enter-label}
\end{figure}
From now on, we suppose:
\begin{equation}
\label{eq: condition far}
H_{1}\cap M_{i}\cap \mathcal{F}_{\lambda+(\sqrt{2}\sigma N_{h}+3L\sqrt{d})h^{\alpha}}=\emptyset.
\end{equation}
As $y\in H_{2}$, then,
\begin{equation}
\label{eq:scal prod 1}
\left\langle \frac{\xi(x)-x}{||\xi(x)-x||_{2}},x_{2}-x_{1}\right\rangle=\left\langle \frac{y-x}{||y-x||_2},x_{2}-x_{1}\right\rangle \geq 0.
\end{equation}
Let us denote $\underline{P}=\underline{P}(x)$ and $P=P(x)$. Then, it follows that for all $z\in H_{1}\cap \underline{P}$, $z+(x_{2}-x_{1})\in H_{2}\cap \underline{P}$. Note that for all $z\in H_{2}\cup H_{1}$, $||\xi(x)-z||_{2}\leq 3\sqrt{d}h$. Thus, take $K=3\sqrt{d}$ and let $C_{2}$ be the corresponding constant according to Lemma \ref{lemma-histo 1}. For a set $A$ and a vector $u$, we denote $A+u=\{z+u,z\in A\}$. By Lemma \ref{lemma-histo 1}, if $z\in H_{1}\cap \overline{M}_{j}\setminus P^{2C_{2}h^{2}}$ then $z\in \underline{P}$. Thus, $z+(x_2-x_1)\in \underline{P}$. Furthermore, if $z+(x_2-x_1)\notin  P^{2C_{2}h^{2}}$, applying Lemma \ref{lemma-histo 1} again, we have that $z+(x_2-x_1)\in  \overline{M}_{j}$. Therefore, we have:
\begin{equation}
\label{eq: inclusion hyper}
\left((H_{1}\cap \overline{M}_{j})\setminus P^{2C_{2}h^{2}}+(x_2-x_1)\right)\setminus P^{2C_{2}h^{2}}\subset H_{2}\cap \overline{M}_{j}.    
\end{equation}

As $H_{2}\subset H_{1}^{\sqrt{d}h}$, Assumptions \textbf{A1} and \textbf{A2} imply that, for all $z\in H_{2}\cap \overline{M}_{j}$, $z\in\mathcal{F}_{\lambda+(\sqrt{2}\sigma N_{h}+3L\sqrt{d})h^{\alpha}}$ (see Lemma \ref{lmm: A1-A2 imply} in Appendix \ref{sec: A1-A2 imply}). Hence, by (\ref{eq: condition far}),
\begin{equation*}
\label{eq: inf sup}
\sup\limits_{z\in H_{2}\cap \overline{M}_{j}}f(z)\leq \inf\limits_{z\in H_{1}\cap M_{i}}f(z). 
\end{equation*}
Now, observe that:
\begin{itemize}
    \item if $z\in H_{2}\cap \overline{M_{j}}$, either $z-(x_2-x_1)\in M_{i}\cap H_1$, and then (\ref{eq: condition far}) ensures that $f(z)\leq f(z-(x_2-x_1))$, or $z-(x_2-x_1)\in \overline{M}_{j}\cap H_1$, and then Assumptions \textbf{A1} and \textbf{A2} ensure that $f(z)\leq f(z-(x_2-x_1))+L(\sqrt{d}h)^{\alpha}$ (as $||x_2-x_1||_2\leq \sqrt{d}h$ and by Lemma \ref{lmm: A1-A2 imply} in Appendix \ref{sec: A1-A2 imply}). Thus, in both cases,
$$f(z)\leq f(z-(x_2-x_1))+L(\sqrt{d}h)^{\alpha}.$$
\item If $z\in H_{2}\cap M_{i}$ and $z-(x_2-x_1)\in M_{i}\cap H_1$, then, Assumptions \textbf{A1} and \textbf{A2} ensure that $f(z)\leq f(z-(x_2-x_1))+L(\sqrt{d}h)^{\alpha}$ (Lemma \ref{lmm: A1-A2 imply} in Appendix \ref{sec: A1-A2 imply}).
\item If $z\in H_{2}\cap M_{i}$ and $z-(x_2-x_1)\in \overline{M}_{j}\cap H_1$, then, by \eqref{eq: inclusion hyper}, $z\in P^{2C_{2}h^{2}}$ or $z-(x_2-x_1)\in P^{2C_{2}h^{2}}$.
\end{itemize}
Now, let $x_{1},...,x_{n}$ be the points of $G_{1/N}$ (the regular grid on which the signal is observed). Let $A_1=H_2\cap \overline{M}_j$, $A_2=\{z\in H_{2}\cap M_{i}$ \text{ s.t. } $z-(x_2-x_1)\in M_{i}\cap H_1\}$ and $A_3=\{z\in H_{2}\cap M_{i} \text{ s.t. }z-(x_2-x_1)\in \overline{M}_{j}\cap H_1\}$. As we have shown that: 
\begin{align*}
A_3&\subset \{z\in H_2\cap P^{2C_{2}h^{2}}\}\cup\{z\in H_2 \text{ s.t. }z-(x_2-x_1)\in H_1\cap P^{2C_{2}h^{2}}\}\\
&=\left(P^{2C_{2}h^{2}}\cup\left(P^{2C_{2}h^{2}}+(x_2-x_1)\right)\right)\cap H_2
\end{align*}
there exists a constant $\kappa$ (only depending on $d$ and $R$) such that $|A_3\cap G_{1/N}|\leq \kappa nh^{d+1}$. Using Assumption \textbf{A0}, it follows that:
\begin{align*}
\sum_{x_{i}\in H_{2}}f(x_{i})&=\sum_{x_i\in A_1}f(x_{i})+\sum_{x_i\in A_2}f(x_{i})+\sum_{x_i\in A_3}f(x_{i})\\
&\leq \sum_{x_i\in A_1}\left(f(x_{i}-(x_2-x_1))+L(\sqrt{d}h)^{\alpha}\right)\\
&\quad+\sum_{x_i\in A_2}\left(f(x_{i}-(x_2-x_1))+L(\sqrt{d}h)^{\alpha}\right)\\
&\quad+\sum_{x_i\in A_3}\left(f(x_i-(x_2-x_1))-f(x_i-(x_2-x_1)) +f(x_i)\right) \\
&\leq \sum_{x_i\in A_1\cup A_2\cup A_3} \left(f(x_{i}-(x_2-x_1))+L(\sqrt{d}h)^{\alpha}\right)+\sum_{x_i\in A_3}2\sup_{z\in [0,1]^{d}}|f(z)|\\
&\leq \sum_{x_i\in A_1\cup A_2\cup A_3} \left(f(x_{i}-(x_2-x_1))+ L(\sqrt{d}h)^{\alpha}\right)+2nh^{d+1}\kappa M\\
&=\sum_{x_{i}\in H_{1}}f(x_{i})+nh^{d}L(\sqrt{d}h)^{\alpha}+2nh^{d+1}\kappa M.
\end{align*}
Now, as $H_1\in C_{h,\lambda}$, recall that we have:
$$\frac{1}{nh^{d}}\sum_{x_{i}\in H_1}X_i=\frac{1}{nh^{d}}\sum_{x_{i}\in H_1}\left(f(x_{i})+\sigma\varepsilon_i\right)\leq  \lambda$$
Thus, by choice of $h$ (and as $h\leq 1$),
\begin{align*}
\frac{1}{nh^{d}}\sum_{x_{i}\in H_{2}} X_{i}&=\frac{1}{nh^{d}}\left(\sum_{x_{i}\in H_{2}} f(x_{i})+\sigma\sum_{x_{i}\in H_{2}}\varepsilon_i\right)\\
&\leq \frac{1}{nh^{d}}\left(\sum_{x_{i}\in H_{1}}f(x_{i})+nh^{d}L(\sqrt{d}h)^{\alpha}+2nh^{d+1}\kappa M+\sigma\sum_{x_{i}\in H_{2}}\varepsilon_i\right)\\
&\leq \lambda + L(3\sqrt{d}h)^{\alpha}+2\kappa Mh+\frac{\sigma}{nh^{d}}\sum_{x_{i}\in H_{2}}\varepsilon_i-\frac{\sigma}{nh^{d}}\sum_{x_{i}\in H_{1}}\varepsilon_i\\
&\leq \lambda + L(3\sqrt{d}h)^{\alpha}+2\kappa Mh^{\alpha}+ 2N_{h}\sqrt{2}\sigma \sqrt{\frac{\log\left(1/h^{d}\right)}{nh^{d}}}\\
&\leq \lambda + L(3\sqrt{d}h)^{\alpha}+2\kappa Mh^{\alpha}+2\sqrt{2}\sigma N_{h}h^{\alpha}.
\end{align*}
Therefore: 
$$H_{2}\subset \widehat{\mathcal{F}}_{\lambda+(2\sqrt{2}\sigma N_{h}+5L\sqrt{d}+2\kappa M)h^{\alpha}}.$$
Now, if $H_{2}$ is not adjacent to $H_{1}$, there exists a finite sequence $H_{3},H_{4},...,H_{p}$ of cube of $C_{h}$ such that for all $k\in\{2,...,n\}$, $[x,\xi(x)]\cap H_{k}\ne\emptyset$ and $H_{3}$ is adjacent to $H_{2}$, $H_{4}$ adjacent to $H_{3}$, ..., and $H_{p}$ adjacent to $H_{1}$. Applying the previous reasoning iteratively then gives that, for all $k\in\{2,...,p\}$,
$$H_{k}\subset\widehat{\mathcal{F}}_{\lambda+(1+p-k)(2\sqrt{2}\sigma N_{h}+5L\sqrt{d}+2\kappa M)h^{\alpha}}.$$
Note that since $||x-\xi(x)||\leq \sqrt{d}h$, we have:
$$p\leq \left|B_2(x,\sqrt{d}h)\cap C_h\right|\leq  \left|B_2(H_1,\sqrt{d}h)\cap C_h\right|\leq \left|B_\infty(H_1,dh)\cap C_h\right|\leq(2d)^{d}$$. 
Thus, for all $k\in\{2,...,n\}$,
$$H_{k}\subset\widehat{\mathcal{F}}_{\lambda+(1+(2d)^{d})(2\sqrt{2}\sigma N_{h}+5L\sqrt{d}+2\kappa M)h^{\alpha}}.$$
and Assertion (\ref{lemma-histo 2a}) follows.\\\\
Now, we prove assertion (\ref{lemma-histo 2b}). The proof is essentially the same as that for assertion (\ref{lemma-histo 2a}). Suppose that $x\in\widehat{\mathcal{F}}_{\lambda}\cap P_{\lambda,\sqrt{d}h}\cap \overline{M}_i$ and that $\gamma_{\lambda,\sqrt{d}h}(x)\in M_{i}$. In particular, it implies:
$$x\in \overline{\mathcal{F}_{\lambda}\cap M_{i}}\text{ and }||x-\gamma_{\lambda,\sqrt{d}h}(x)||_{2}\leq \sqrt{d}h.$$
Assume that $n$ is sufficiently large such that $3\sqrt{d}h<R/2$. Assumption \textbf{A3} ensures (see Lemma \ref{lmm: A3 imply} in Appendix \ref{sec: detail A3}) that there exists $j\in\{1,...,l\}$ such that
$$\left[x,\gamma_{\lambda,\sqrt{d}h}(x)\right]^{\sqrt{d}h}\subset \overline{M}_{i}\cup \overline{M}_{j}$$
Let $H_{1}\in C_{h,\lambda}$ be a hypercube containing $x$ and denote $x_{1}$ its center. Suppose that there exists $y\in \left[x,\gamma_{\lambda,\sqrt{d}h}(x)\right]$ such that $y\notin H_{1}$. Denote $H_{2}$ the hypercube of $C_{h}$ containing $y$ and $x_{2}$ its center. Suppose furthermore that $H_{2}$ is adjacent to $H_{1}$. If
$$H_{1}\cap M_{j}\cap \mathcal{F}_{\lambda+(\sqrt{2}\sigma N_{h}+3L\sqrt{d})h^{\alpha}}\ne\emptyset$$
as $H_{2}\subset H_{1}^{\sqrt{d}h}$, then $H_{2}\subset \mathcal{F}_{\lambda+(\sqrt{2}\sigma N_{h}+5L\sqrt{d})h^{\alpha}}$ by Assumptions \textbf{A1} and \textbf{A2} (see Lemma \ref{lmm: A1-A2 imply} in Appendix \ref{sec: A1-A2 imply}) and thus, by Lemma \ref{lmm1}, $H_{2}\subset\widehat{\mathcal{F}}_{\lambda+(2\sqrt{2}\sigma N_{h}+5\sqrt{d})h^{\alpha}}$.
From now on, we suppose that:
\begin{equation}
\label{eq: condition far 2}
H_{1}\cap M_{j}\cap \mathcal{F}_{\lambda+(\sqrt{2}\sigma N_{h}+3L\sqrt{d})h^{\alpha}}=\emptyset.
\end{equation}
As $y\in H_{2}$, we have:
\begin{align}
\left\langle \frac{\gamma_{\lambda,\sqrt{d}h}(x)-x}{||\gamma_{\lambda,\sqrt{d}h}(x)-x||_{2}},x_{2}-x_{1}\right\rangle&=\left\langle \frac{y-x}{||y-x||_2},x_{2}-x_{1}\right\rangle \geq 0.\label{eq:scal prod 2}
\end{align}
Let us denote $\overline{P}=\overline{P}(\gamma_{\lambda,\sqrt{d}h}(x))$ and $P=P(\gamma_{\lambda,\sqrt{d}h}(x)).$ Note that, by definition of $\gamma_{\lambda,\sqrt{d}h}$, $\overline{P}$ can be equivalently written as:
$$\overline{P}=\left\{z\in[0,1]^{d}\text{ s.t. }\left\langle z-\xi(x),\frac{\gamma_{\lambda,\sqrt{d}h}(x)-x}{||\gamma_{\lambda,\sqrt{d}h}(x)-x||_{2}}\right\rangle\geq 0\right\}.$$
Consequently, for all $z\in H_{1}\cap \overline{P}$, $z+(x_{2}-x_{1})\in H_{2}\cap \overline{P}$. Note that for all $z\in H_{2}\cup H_{1}$,
$$||\xi(\gamma_{\lambda,\sqrt{d}h}(x))-z||_{2}=||\xi(x)-z||_2\leq 3\sqrt{d}h.$$
Thus, take $K=3\sqrt{d}$ and let $C_{2}$ be the corresponding constant according to Lemma \ref{lemma-histo 1}. By Lemma \ref{lemma-histo 1}, if $z\in H_{1}\cap \overline{M}_{i}\setminus P^{2C_{2}h^{2}}$ then $z\in \overline{P}$. Thus, $z+(x_2-x_1)\in \overline{P}$. Furthermore, if $z+(x_2-x_1)\notin  P^{2C_{2}h^{2}}$, applying Lemma \ref{lemma-histo 1} again, we have that $z+(x_2-x_1)\in  \overline{M}_{i}$. Therefore, we have:
\begin{equation}
\label{eq: inclusion hyper 2}
\left((H_{1}\cap \overline{M}_{i})\setminus P^{2C_{2}h^{2}}+(x_2-x_1)\right)\setminus P^{2C_{2}h^{2}}\subset H_{2}\cap \overline{M}_{i}.    
\end{equation}
 As $H_{2}\subset H_{1}^{\sqrt{d}h}$, Assumptions \textbf{A1} and \textbf{A2} implies that, for all $z\in H_{2}\cap \overline{M}_{i}$, $z\in\mathcal{F}_{\lambda+3L\sqrt{d}h^{\alpha}}$ (see Lemma \ref{lmm: A1-A2 imply} in Appendix \ref{sec: A1-A2 imply}). Hence, by (\ref{eq: condition far 2}),
\begin{equation}
\label{eq: inf sup 2}
\sup\limits_{z\in H_{2}\cap \overline{M}_{i}}f(z)\leq \inf\limits_{z\in H_{1}\cap M_{j}}f(z)
\end{equation}
Now, observe that:
\begin{itemize}
    \item if $z\in H_{2}\cap \overline{M_{i}}$, either $z-(x_2-x_1)\in M_{j}\cap H_1$ and (\ref{eq: inf sup 2}) ensures that $f(z)\leq f(z-(x_2-x_1))$, or $z-(x_2-x_1)\in \overline{M}_{i}\cap H_1$ and, as $||x_1-x_2||_2\leq \sqrt{d}h$, Assumptions \textbf{A1} and \textbf{A2} ensure that $f(z)\leq f(z-(x_2-x_1))+L(\sqrt{d}h)^{\alpha}$ (Lemma \ref{lmm: A1-A2 imply} in Appendix \ref{sec: A1-A2 imply}). Thus, in both cases,
$$f(z)\leq f(z-(x_2-x_1))+L(\sqrt{d}h)^{\alpha}.$$
\item If $z\in H_{2}\cap M_{j}$ and $z-(x_2-x_1)\in M_{j}\cap H_1$, then Assumptions \textbf{A1} and \textbf{A2} ensure that $f(z)\leq f(z-(x_2-x_1))+L(\sqrt{d}h)^{\alpha}$ (Lemma \ref{lmm: A1-A2 imply} in Appendix \ref{sec: A1-A2 imply}).
\item  If $z\in H_{2}\cap M_{j}$ and $z-(x_2-x_1)\in \overline{M}_{i}\cap H_1$, then, by \eqref{eq: inclusion hyper 2}, $z\in P^{2C_{2}h^{2}}$ or $z-(x_2-x_1)\in P^{2C_{2}h^{2}}$ .
\end{itemize}
Now, let $x_{1},...,x_{n}$ be the points of $G_{1/N}$. Let $A_1=H_2\cap \overline{M}_i$, $A_2=\{z\in H_{2}\cap M_{j}$ \text{ s.t. } $z-(x_2-x_1)\in M_{j}\cap H_1\}$ and $A_3=\{z\in H_{2}\cap M_{j} \text{ s.t. }z-(x_2-x_1)\in \overline{M}_{i}\cap H_1\}$. Again, there exists a constant $\kappa$ (only depending on $d$ and $R$) such that $|A_3\cap G_{1/N}|\leq \kappa nh^{d+1}$. Using Assumption \textbf{A0}, it follows that:
\begin{align*}
\sum_{x_{i}\in H_{2}}f(x_{i})&=\sum_{x_i\in A_1}f(x_{i})+\sum_{x_i\in A_2}f(x_{i})+\sum_{x_i\in A_3}f(x_{i})\\
&\leq \sum_{x_i\in A_1}\left(f(x_{i}-(x_2-x_1))+L(\sqrt{d}h)^{\alpha}\right)\\
&\quad+\sum_{x_i\in A_2}\left(f(x_{i}-(x_2-x_1))+L(\sqrt{d}h)^{\alpha}\right)\\
&\quad+\sum_{x_i\in A_3}\left(f(x_i-(x_2-x_1))-f(x_i-(x_2-x_1)) +f(x_i)\right)\\
&\leq \sum_{x_{i}\in H_{1}}f(x_{i})+nh^{d}L(\sqrt{d}h)^{\alpha}+2nh^{d+1}\kappa M.
\end{align*}
Now, as $H_1\in C_{h,\lambda}$, recall that we have:
$$\frac{1}{nh^{d}}\sum_{x_{i}\in H_1}X_i=\frac{1}{nh^{d}}\sum_{x_{i}\in H_1}\left(f(x_{i})+\sigma\varepsilon_i\right)\leq  \lambda$$
Thus, by choice of $h$, for $n$ sufficiently large such that $h<1$,
\begin{align*}
\frac{1}{nh^{d}}\sum_{x_{i}\in H_{2}} X_{i}&=\frac{1}{nh^{d}}\left(\sum_{x_{i}\in H_{2}} f(x_{i})+\sigma\sum_{x_{i}\in H_{2}}\varepsilon_i\right)\\
&\leq \frac{1}{nh^{d}}\left(\sum_{x_{i}\in H_{1}}f(x_{i})+nh^{d}L(\sqrt{d}h)^{\alpha}+2nh^{d+1}\kappa M+\sigma\sum_{x_{i}\in H_{2}}\varepsilon_i\right)\\
&\leq \lambda + L(3\sqrt{d}h)^{\alpha}+2\kappa Mh+\frac{\sigma}{nh^{d}}\sum_{x_{i}\in H_{2}}\varepsilon_i-\frac{\sigma}{nh^{d}}\sum_{x_{i}\in H_{1}}\varepsilon_i\\
&\leq \lambda + L(3\sqrt{d}h)^{\alpha}+2\kappa Mh^{\alpha}+ 2N_{h}\sqrt{2}\sigma \sqrt{\frac{\log\left(1/h^{d}\right)}{nh^{d}}}\\
&\leq \lambda + L(3\sqrt{d}h)^{\alpha}+2\kappa Mh^{\alpha}+2\sqrt{2}\sigma N_{h}h^{\alpha}
\end{align*}
$H_{2}\subset \widehat{\mathcal{F}}_{\lambda+(2\sqrt{2}\sigma N_{h}+5L\sqrt{d}+2\kappa M)h^{\alpha}}$.
Applying again the iterative reasoning used for assertion (\ref{lemma-histo 2a}), we obtain assertion (\ref{lemma-histo 2b}).

\end{proof}
\subsection{Proof of Lemma \ref{lemma noise 2}}
This section is dedicated to the proof of Lemma \ref{lemma noise 2} from Section \ref{Upperbounds section}.\vspace{0.25cm}
\label{proof noise}
\begin{proof}
Let $h>\sfrac{1}{N}$ and $H\subset[0,1]^{d}$ be a closed hypercube of side $h$. As the $(\varepsilon_{i})_{i=1,...,n}$ are i.i.d and standard Gaussian variables, we have, for all $H\in C_{h}$,
$$\mathbb{P}\left(\left| \frac{1}{nh^{d}}\sum\limits_{x_i\in H}\sigma\varepsilon_{i}\right|\geq t\right)\leq 2\exp\left(-\frac{nh^{d}t^{2}}{2\sigma^{2}}\right).$$
And thus, as the number of point in any $H\in C_{h}$ is at least to $nh^{d}$,
$$\mathbb{P}\left(\left| \frac{1}{nh^{d}}\sum\limits_{x_i\in H}\sigma\varepsilon_{i}\right|\geq t\right)\leq 2\exp\left(-\frac{nh^{d}t^{2}}{2\sigma^{2}}\right).$$
Now, by union bound, using $\left|C_{h}\right|\leq (1/h)^{d}$,
$$\mathbb{P}\left(\max\limits_{H\in C_{h}}\left| \frac{1}{nh^{d}}\sum\limits_{x_i\in H}\sigma\varepsilon_{i}\right|\geq t\right)\leq 2\left(\frac{1}{h}\right)^{d}\exp\left(-\frac{nh^{d}t^{2}}{2\sigma^{2}}\right)$$
and thus,
$$\mathbb{P}\left(N_{h}\geq t\right)\leq 2\left(\frac{1}{h}\right)^{d}\exp\left(-t^{2}\log\left(1/h^{d}\right)\right).$$
Now, take $t\geq \sqrt{8}$, then $t^{2}/4+2\leq t^{2}$. Thus, if $h<1$
\begin{align*}
\mathbb{P}\left(N_h\geq t \right)&\leq2\left(\frac{1}{h}\right)^{d}\exp\left(-\frac{1}{2}t^{2}\log\left(1+\frac{1}{h^{d}}\right)\right)\\
&\leq 2\left(\frac{1}{h}\right)^{d}\exp\left(-(t^{2}/8+1)\log\left(1+\frac{1}{h^{d}}\right)\right)\\
&\leq 2\exp(-t^{2}\log(2)/8).
\end{align*}
Observe that, for all $t\leq \sqrt{8}$, 
$$\exp(-t^{2}\log(2)/8) \geq \exp(-\log(2))$$
and thus, for all $t\leq \sqrt{8}$,
$$2\exp(\log(2))\times\exp(-t^{2}\log(2)/8)\geq 1\geq \mathbb{P}\left(N_h\geq t \right).$$
It follows that, for all $t>0$,
$$\mathbb{P}\left(N_h\geq t \right)\leq 2\exp(\log(2))\times\exp(-t^{2}\log(2)/8).$$

\end{proof}
\section{Claims details}
\label{appendix: claim}
\subsection{Lemma \ref{lmm: A3 imply}}
\label{sec: detail A3}
This section provides details supporting a key claim that was repeatedly invoked in the previous proofs: namely, that Assumption~\textbf{A3} implies that, locally, the boundaries of regular pieces separate at most two regions. We formalize this claim in the following lemma.
\begin{lmm}
\label{lmm: A3 imply}
Let $x\in[0,1]^{d}$ and $h<R$. Under Assumption \textbf{A3}, there exists $1\leq i,j\leq l$ such that $B_2(x,h)\subset \overline{M}_i\cap \overline{M}_j$.
\end{lmm}
\begin{proof}
 Suppose, by contradiction, that \( B_2(x,h) \) intersects three or more regular pieces. Then either there exists a multiple point \( z \in B_2(x,h) \), that is, a point lying in the boundaries of at least three distinct regular pieces. Let \( M_{i_1}, M_{i_2}, M_{i_3} \) be three such pieces, so that \( z \in \partial M_{i_1} \cap \partial M_{i_2} \cap \partial M_{i_3} \). By Theorem~4.8 of \cite{Fed59}, and under Assumption~\textbf{A3}, there exist three Euclidean balls of radius \( R/2 \), denoted \( B_1, B_2, B_3 \), all containing \( z \) and such that \( B_j \subset M_{i_j} \cup \{z\} \) for all \( 1 \leq j \leq 3 \). In particular, \( B_1 \cap B_2 \cap B_3 = \{z\} \), which is a contradiction, since no three balls (with strictly positive radii) in \( \mathbb{R}^d \) can intersect at a unique point. This excludes the case of a multiple point. Now, suppose that there is no multiple point within \( B_2(x,h) \). Then there exists a regular piece \( M_{i_1} \) such that
\[
\partial M_{i_1} \cap \left( \bigcup_{j \ne i_1} \partial M_j \setminus \partial M_{i_1} \right) \cap B_2(x,h) = \emptyset,
\]
but both \( \partial M_{i_1} \cap B_2(x,h) \neq \emptyset \) and \( \left( \bigcup_{j \ne i_1} \partial M_j \setminus \partial M_{i_1} \right) \cap B_2(x,h) \neq \emptyset \). Let
\[
(y_1, y_2) \in \operatorname{argmin}_{y_1 \in \partial M_{i_1} \cap B_2(x,h),\, y_2 \in \left( \bigcup_{j \ne i_1} \partial M_j \setminus \partial M_{i_1} \right) \cap B_2(x,h)} \|y_1 - y_2\|_2,
\]
Note that $||y_1-y_2||_2>0$ and  define \( z = (y_1+ y_2)/2 \). Since \( B_2(x,h) \) is convex, \( z \in B_2(x,h) \). 

If \( z \in \bigcup_{j=1}^l \partial M_j \), then 
\[
d_2\left(\partial M_{i_1}, \bigcup_{j \ne i_1} \partial M_j \setminus \partial M_{i_1}\right) \leq \frac{\|y_1 - y_2\|_2}{2}<\|y_1 - y_2\|_2,
\]
which contradicts the minimality of \(\|y_1 - y_2\|_2 \). Therefore, using that \( d_2(z, \partial M_{i_1}) \leq\|y_1 - y_2\|_2/2 \leq h  \), we conclude that
\[
z \in B_2\left( \bigcup_{j=1}^l \partial M_j, h \right) \setminus \bigcup_{j=1}^l \partial M_j.
\]
Now, the point \( z \) admits (at least) two distinct closest points in \( \bigcup_{j=1}^l \partial M_j \), namely \( y_1 \) and \( y_2 \). If this were not the case, there would exist \( y_3 \in \bigcup_{j=1}^l \partial M_j \) such that \(\|z - y_3\|_2 <\|y_1 - y_2\|_2 / 2\), which would imply
\[
d_2\left(\partial M_{i_1}, \bigcup_{j \ne i_1} \partial M_j \setminus \partial M_{i_1}\right) \leq  \frac{\|y_1 - y_2\|_2}{2}+\|z - y_3\|_2<\|y_1 - y_2\|_2 ,
\]
contradicting again the minimality of \(\|y_1 - y_2\|_2 \). Hence, \( z \) has at least two distinct closest points in \( \bigcup_{j=1}^l \partial M_j \), which violates the reach condition from Assumption~\textbf{A3}. This completes the argument. 
\end{proof}
\subsection{Lemma \ref{lmm: A1-A2 imply}}
\label{sec: A1-A2 imply}
This section provides details supporting another key claim that was repeatedly invoked in the previous proofs involving Assumptions \textbf{A1} and \textbf{A2}, formalized as the following lemma.
\begin{lmm}
\label{lmm: A1-A2 imply}
Let $i\in\{1,...,l\}$, $\lambda\in \mathbb{R}$, $x\in \overline{M_{i}}\cap\mathcal{F}_{\lambda}$ and $h>0$. Under Assumptions \textbf{A1} and \textbf{A2}, we have $B_{2}(x,h)\cap\overline{M}_{i}\subset \mathcal{F}_{\lambda+Lh^{\alpha}}.$
\end{lmm}
\begin{proof}
By Assumption \textbf{A1} for all $y\in B_{2}(x,h)\cap M_i$, 
$$|f(x)-f(y)|\leq L||x-y||^{\alpha}_{2}\leq Lh^{\alpha}$$
Thus $B_{2}(x,h)\cap M_i\subset \mathcal{F}_{\lambda+Lh^{\alpha}}$. Moreover, if \( z \in \partial M_i \cap B_{2}(x,h)\), there exists $(y_{n})_{n\in\mathbb{N}}$ a sequence of elements of $B_{2}(x,h)\cap M_i$ converging to $z$. Then, by Assumption \textbf{A2}, we have: $$f(z)\leq \lim_{n\rightarrow+\infty}f(y_{n})\leq \lambda+Lh^{\alpha}$$
and consequently $B_{2}(x,h)\cap \overline{M}_i\subset \mathcal{F}_{\lambda+Lh^{\alpha}}$. 
\end{proof}\vspace{0.25cm}
Following the same reasoning, the result of Lemma \ref{lmm: A1-A2 imply} can also be extended, supposing only that $x\in \overline{M}_i$ and $\lim_{z\in M_i\rightarrow x}\leq \lambda$.
\bibliographystyle{plainnat}
\bibliography{bibliographie}
\end{document}